\documentclass[reqno, 11pt, letterpaper]{amsart}

\usepackage{amsmath}
\usepackage{amsfonts}
\usepackage{amssymb}
\usepackage{graphicx}
\usepackage{amsthm,color,yfonts,cite}
\usepackage{paralist}
\usepackage{hyperref}
\usepackage{physics}
\usepackage{bbm}
 \usepackage{bm}
\usepackage{comment}

\newtheorem{theorem}{Theorem}

\newtheorem{proposition}[theorem]{Proposition}
\newtheorem{lemma}[theorem]{Lemma}

\theoremstyle{remark}
\newtheorem{remark}[theorem]{Remark}

\usepackage{wrapfig}
\usepackage{tikz}
\usetikzlibrary{arrows,calc,decorations.pathreplacing}
\definecolor{light-gray1}{gray}{0.90}
\definecolor{light-gray2}{gray}{0.80}
\definecolor{light-gray3}{gray}{0.60}

\numberwithin{equation}{section}

\numberwithin{theorem}{section}

\numberwithin{table}{section}

\numberwithin{figure}{section}

\ifx\pdfoutput\undefined
  \DeclareGraphicsExtensions{.pstex, .eps}
\else
  \ifx\pdfoutput\relax
    \DeclareGraphicsExtensions{.pstex, .eps}
  \else
    \ifnum\pdfoutput>0
      \DeclareGraphicsExtensions{.pdf}
    \else
      \DeclareGraphicsExtensions{.pstex, .eps}
    \fi
  \fi
\fi

\title[Modified scattering for the Vlasov-Riesz system]{Modified scattering for the Vlasov-Riesz system with long-range interactions}

\date{\today}
\author[Y. Hong]{Younghun Hong}
\address{Department of Mathematics, Chung-Ang University, Seoul 06974, Korea}
\email{yhhong@cau.ac.kr}

\author[S. Pankavich]{Stephen Pankavich}
\address{Department of Applied Mathematics and Statistics, Colorado School of Mines, Golden, CO 80401, USA}
\email{pankavic@mines.edu}
\begin{document}

\begin{abstract}
We study the long-time asymptotic behavior of small-data solutions to the three-dimensional Vlasov--Riesz system with the inverse power-law potential $\lambda |x|^{-\alpha}$ in the strictly long-range regime ($0 < \alpha < 1$). By introducing finite- and infinite-time modified wave operators for the characteristic flows, we describe the asymptotic dynamics via convergence to an effective profile along a suitably modified reference flow, and establish modified scattering of solutions. Our proof relies mainly on ODE techniques for the characteristic flows, while also using PDE methods for weighted $W^{1,\infty}$-bounds. Compared with the earlier result \cite{HuangKwon}, our Lagrangian approach extends modified scattering to the broader regime $\frac{1}{2}<\alpha<1$ and provides a distinct and more robust argument.
\end{abstract}

\maketitle


\section{Introduction}

\subsection{Model description}

We consider the Vlasov equation
\begin{equation}\label{eq: VR}
\left\{
\begin{aligned}
\partial_t f + v \cdot \nabla_x f + \mathbf{E}_f \cdot \nabla_v f &= 0, \\
f(0) &= f_0,
\end{aligned}
\right.
\end{equation}
where
\[
f = f(t,x,v) : [0,\infty) \times \mathbb{R}^3 \times \mathbb{R}^3 \to [0,\infty)
\]
denotes the particle distribution function in phase space. The self-consistent force field
\[
\mathbf{E}_f = \mathbf{E}_f(t,x) : [0,\infty) \times \mathbb{R}^3 \to \mathbb{R}^3
\]
is defined by
\begin{equation}\label{eq: VR force field}
\mathbf{E}_f(t,x)
= - \nabla (w * \rho_f)(t,x)
= - \int_{\mathbb{R}^3} \nabla w(x-y)\, \rho_f(t,y)\, dy,
\end{equation}
where \(w : \mathbb{R}^3 \to \mathbb{R}\) is the interaction potential. 
For a distribution \(h = h(x,v) : \mathbb{R}^3 \times \mathbb{R}^3 \to [0,\infty)\), the associated spatial density is
\[
\rho_h(x) := \int_{\mathbb{R}^3} h(x,v)\, dv.
\]

The equation \eqref{eq: VR} describes the mean-field dynamics of a particle system, such as a plasma or a galactic system, interacting via the pairwise potential \( w \). In this article, we assume
\begin{equation}\label{eq: w inverse power law assumption}
w(x) = \frac{\lambda}{|x|^\alpha},
\end{equation}
with \( 0 < \alpha < 3 \). The coupling constant \( \lambda \neq 0 \) is arbitrary in principlem but upon normalizing, we restrict to the cases \( \lambda = 1 \) (repulsive) and \( \lambda = -1 \) (attractive). Under this assumption, \eqref{eq: VR} is called the \emph{Vlasov--Riesz system}, as the convolution kernel \( w(x-x') \) coincides, up to a multiplicative constant, with the Riesz potential \( |\nabla|^{-(3-\alpha)} \). More precisely, the force field can be written as \(\mathbf{E}_f = -\nabla(c_\alpha \lambda\, |\nabla|^{-(3-\alpha)} \rho_f)\). 

When \( \alpha = 1 \), the system reduces to the classical \emph{Vlasov--Poisson system}, which has been studied extensively throughout the literature. In particular, global existence of small data solutions was first established by Bardos and Degond in \cite{BD}. Thereafter, large data, global-in-time solutions were discovered independently by Pfaffelmoser \cite{Pfaffelmoser} and Lions and Perthame \cite{Lions-Perthame}, and the methods of the former author were subsequently significantly refined by Schaeffer \cite{SchaefferVP}.

Although much is known concerning the well-posedness and behavior of solutions to Vlasov-Poisson, significantly less has been determined about the more general Vlasov-Riesz system. Recently,  Choi and Jeong \cite{CJ} proved the local-in-time well-posedness of this system in a weighted Sobolev space for $\alpha < \frac{9}{4}$, while in the attractive case, they obtained the formation of a finite-time singularity for solutions when $2 \leq \alpha < \frac{9}{4}$. For general results concerning Riesz interactions, see \cite{Lewin,NRS} and the references therein.

The long-time asymptotic behavior of solutions to \eqref{eq: VR} is also of particular interest. 
When the interaction potential \( w \) takes the form \eqref{eq: w inverse power law assumption} with \( \alpha > 1 \), it is referred to as \emph{short-range}, and small-data solutions are expected to approach free transport as \( t \to \infty \). 
This phenomenon, known as \emph{scattering}, was rigorously established for \( 1 < \alpha < 2 \) in \cite{HuangKwon, HongPankavich}. In contrast, for \emph{long-range} interactions, namely \( 0 < \alpha \le 1 \), classical scattering fails, as shown in \cite{ChoiHa}. 
Nevertheless, the asymptotic dynamics can still be described through a suitable modification of scattering, known as \emph{modified scattering}. 
For the Vlasov--Poisson system, i.e. in the borderline case \( \alpha = 1 \), modified scattering with a logarithmic correction was first proved by Choi and Kwon~\cite{ChoiKwon}. 
An alternative, and significantly more constructive, proof was later established by Ionescu, Pausader, Wang, and Widmayer~\cite{IPWW22}. See also \cite{Bigorgne, Breton, Glassey, Pankavich1, Pankavich2, Pankavich3, PBA, Wang23} for related developments in kinetic equations, including scattering in higher dimensions and modified scattering of small data solutions to the relativistic Vlasov-Maxwell system. 
Additionally, modified scattering has been better understood in the context of dispersive PDEs for many years. We refer the reader to the classical results of Ginibre and Velo \cite{GV1, GV2}, Hayashi and Naumkin \cite{HayashiNaumkin2001}, and Nakanishi \cite{N1, N2} for more information.
That being said, much less is known for \eqref{eq: VR} in the strictly long-range regime (\( 0 < \alpha < 1 \)). 
To the best of the authors' knowledge, the only result in this regime is due to Huang and Kwon~\cite{HuangKwon}, who recently proved modified scattering for \( \alpha \) sufficiently close to \(1\).
Hence, one of the main novelties of the current paper is the extension of these results to a wider class of potentials, satisfying $\frac{1}{2} < \alpha < 1$.

\subsection{Statement of the main result}

Throughout, we study the long-time asymptotic behavior of small-data solutions to the Vlasov--Riesz system \eqref{eq: VR} in the strictly long-range regime \( \frac{1}{2} < \alpha < 1 \). 
Prior stating the main result, we recall several preliminaries concerning the small-data theory for \eqref{eq: VR}. 
Let \( 0 < \alpha < 1 \) and assume that the initial data \( f_0 \ge 0 \) satisfies
\begin{equation}\label{eq: small data condition for decay only}
\| f_0 \|_{W_{x,v}^{1,1}(\mathbb{R}^6)\cap W_{x,v}^{1,\infty}(\mathbb{R}^6)}
+ \| f_0 \|_{W_x^{1,1}(\mathbb{R}^3; W_v^{1,\infty}(\mathbb{R}^3))\cap W_v^{1,1}(\mathbb{R}^3; W_x^{1,\infty}(\mathbb{R}^3))}
\le \eta_*,
\end{equation}
for a sufficiently small \( \eta_* > 0 \); see \eqref{eq: mixed norm definition} for the precise definition of the mixed norms. 
Under this condition, the Vlasov--Riesz system \eqref{eq: VR} admits a unique global-in-time solution \(f(t) \in L^1_{x,v}(\mathbb{R}^6) \cap L^\infty_{x,v}(\mathbb{R}^6)\) with initial data \( f_0 \), satisfying the dispersion bounds
\begin{equation}\label{eq: dispersion estimates-0}
\sup_{t \ge 0}
\Big(
\langle t \rangle^{1 + \alpha}
\| \mathbf{E}_f(t) \|_{L_x^\infty(\mathbb{R}^3)}
+ \langle t \rangle^{2 + \alpha}
\| \nabla \mathbf{E}_f(t) \|_{L_x^\infty(\mathbb{R}^3)}
+ \langle t \rangle^{3}
\| \nabla^2 \mathbf{E}_f(t) \|_{L_x^\infty(\mathbb{R}^3)}
\Big)
\lesssim \eta_*,
\end{equation}
as we show within Theorem~\ref{thm: global well-posedness}. Moreover, the PDE \eqref{eq: VR} is equivalent to the forward-in-time Hamiltonian flow \( \Phi(t) \) on \( \mathbb{R}^3 \times \mathbb{R}^3 \), defined by
\begin{equation}\label{eq: Hamiltonian flow Phi(t)}
\Phi(t)(x,v)
:= \big( \mathcal{X}(t,0,x,v),\, \mathcal{V}(t,0,x,v) \big),
\end{equation}
where the characteristic curves \( (\mathcal{X}, \mathcal{V}) \) solve
\begin{equation}\label{eq: forward characteristic ODE for VP-0}
\left\{
\begin{aligned}
\partial_t \mathcal{X}(t,0,x,v) &= \mathcal{V}(t,0,x,v), \\
\partial_t \mathcal{V}(t,0,x,v) &= \mathbf{E}_f \big( t, \mathcal{X}(t,0,x,v) \big), \\
\mathcal{X}(0,0,x,v) &= x, \\
\mathcal{V}(0,0,x,v) &= v.
\end{aligned}
\right.
\end{equation}
Consequently, for a given initial distribution \( f_0 \), the pushforward measure \( \Phi(t)_\# f_0 \) yields a global weak solution to \eqref{eq: VR}; see Remark~\ref{remark: PDE ODE relation} for further details.

By the dispersion bound \eqref{eq: dispersion estimates-0}, the nonlinear Hamiltonian flow \( \Phi(t) \) of the Vlasov--Riesz system can, to leading order, be approximated by the reference flow \( \Phi^{\mathrm{ref}}(t) \) defined by
\[
\Phi^{\mathrm{ref}}(t)(x,v)
:= \left(
x + t v - \frac{t^{1-\alpha}-1}{1-\alpha}\,\mathbf{A}_\infty(v),\,
v
\right).
\]
Here, \( \mathbf{A}_\infty(v) \) is a velocity correction defined by
\begin{equation}\label{eq: A(v) definition-0}
\mathbf{A}_\infty(v)
:=
- \iint_{\mathbb{R}^6}
\nabla w\big(v - \mathcal{V}^+(\tilde{x},\tilde{v})\big)\,
f_0(\tilde{x},\tilde{v})\,
d\tilde{x}\, d\tilde{v},
\end{equation}
where the momentum limit \( \mathcal{V}^+(x,v) \) is given by
\[
\mathcal{V}^+(x,v)
:=
\lim_{t \to \infty} \mathcal{V}(t,0,x,v)
=
v + \int_0^\infty
\mathbf{E}_f\big(t,\mathcal{X}(\tau,0,x,v)\big)\, d\tau,
\]
established by Lemma~\ref{lemma: properties of the momentum limit}. 
The rigorous construction of \( \Phi^{\mathrm{ref}}(t) \) is given in Section~\ref{sec: construction of the modified reference flow}. 
We say that \( f(t) \) exhibits \emph{modified scattering} as \( t \to \infty \) if there exists an asymptotic profile
\[
f^+(x,v):=\lim_{t\to\infty} f\big(t, \Phi^{\mathrm{ref}}(t)(x,v)\big)
\]
where this convergence occurs in a suitable norm.

In this direction, our main result establishes modified scattering for small-data global solutions to the Vlasov--Riesz system in \( L_{x,v}^\infty(\mathbb{R}^6) \).

\begin{theorem}[Modified scattering]\label{theorem: modified scattering}
For \( \frac{1}{2} < \alpha < 1 \), there exists a small constant \( \eta_* > 0 \) such that the following holds. Suppose that \( f_0 \ge 0 \) and 
\begin{equation}\label{eq: small data condition}
\| \langle x \rangle f_0 \|_{W_{x,v}^{1,1}(\mathbb{R}^6)\cap W_{x,v}^{1,\infty}(\mathbb{R}^6)}+ \|\langle x\rangle f_0 \|_{W_x^{1,1}(\mathbb{R}^3; W_v^{1,\infty}(\mathbb{R}^3))\cap W_v^{1,1}(\mathbb{R}^3; W_x^{1,\infty}(\mathbb{R}^3))}
\le \eta_*,
\end{equation}
and let \(f(t) \in C([0,\infty); W^{1,1}_{x,v}(\mathbb{R}^6) \cap W^{1,\infty}_{x,v}(\mathbb{R}^6))\) be the global solution to the Vlasov--Riesz system \eqref{eq: VR} with initial data \( f_0 \) such that \eqref{eq: dispersion estimates-0} holds. Then, there exists an asymptotic profile \( f^+ \in L^1_{x,v}(\mathbb{R}^6) \cap L^\infty_{x,v}(\mathbb{R}^6) \) such that for all \( t \ge 1 \),
\begin{equation}\label{eq: modified scattering}
\big\| f \big( t, \Phi^{\mathrm{ref}}(t)(x,v) \big) - f^+(x,v) \big\|_{L^\infty_{x,v}(\mathbb{R}^6)}
\lesssim \frac{\eta_*^2}{t^{2\alpha - 1}}.
\end{equation}
\end{theorem}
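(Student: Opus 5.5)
Since $f(t) = f_0\circ\Phi(t)^{-1}$ (Remark~\ref{remark: PDE ODE relation}), we have $f(t,\Phi^{\mathrm{ref}}(t)(x,v)) = f_0\big(\Phi(t)^{-1}\circ\Phi^{\mathrm{ref}}(t)(x,v)\big)$. The plan is to show that the finite-time modified wave operator $W(t) := \Phi(t)^{-1}\circ\Phi^{\mathrm{ref}}(t)$ converges uniformly on $\mathbb{R}^6$ to a limit $W_+$, at the rate $|W(t)-W_+| \lesssim \eta_*\, t^{-(2\alpha-1)}$. We then set $f^+ := f_0\circ W_+$. The bound \eqref{eq: modified scattering} follows from the mean value theorem, using $\|\nabla f_0\|_{L^\infty}\le\eta_*$. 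The membership $f^+\in L^1\cap L^\infty$ follows because $W_+$ is measure preserving, as a pointwise limit of the symplectic maps $W(t)$.

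For convergence I will use a Cauchy argument. Write $(X_s(t),V_s(t)) := \Phi(s)^{-1}$ applied to $\Phi^{\mathrm{ref}}(t)(x,v)$, i.e. follow the nonlinear characteristic backward from time $t$ to time $s$. The key step is to control the nonlinear characteristic issued at time $t$ from the point $(x+tv-\frac{t^{1-\alpha}-1}{1-\alpha}\mathbf{A}_\infty(v), v)$ over $[1,t]$ by a bootstrap. The ansatz is that along this curve
\[
\mathcal{X}(s)=x+sv-\tfrac{s^{1-\alpha}-1}{1-\alpha}\mathbf{A}_\infty(v)+O(\eta_*),\qquad \mathcal{V}(s)=v+O(\eta_* s^{-\alpha}).
\]
To close it, I need an asymptotic expansion of the field along such trajectories:
\[
\mathbf{E}_f(s,y)= s^{-1-\alpha}\mathbf{A}_\infty(y/s)+O(\eta_* s^{-2\alpha}),
\]
valid uniformly in $y$ for $s\ge1$. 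This should come from writing $\rho_f(s)$ as the pushforward of $f_0$ under $\Phi(s)$. Each particle sits at $\tilde x+s\mathcal{V}^+(\tilde x,\tilde v)+O(s^{1-\alpha})$, where the displacement correction is $O(\eta_* s^{1-\alpha})$ and the weight $\langle x\rangle$ on $f_0$ handles the $\tilde x$-shift. By homogeneity, $\nabla w(y-z)=s^{-1-\alpha}\nabla w(y/s-z/s)$, and the kernel perturbation costs a relative error $O(s^{-\alpha})$. This gives an error $s^{-1-\alpha}\cdot s^{-\alpha}=s^{-1-2\alpha}$, not $s^{-2\alpha}$. The singularity of $\nabla w$ is integrable since $\alpha<2$, and it is handled using $\rho$ bounds in $L^1\cap L^\infty$ via the weighted $W^{1,\infty}$ bounds. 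Plugging in $y=\mathcal{X}(s)$, we get $y/s = v + O(s^{-\alpha})$. Using $\nabla \mathbf{A}_\infty$ bounded, this gives $\dot{\mathcal{V}} = s^{-1-\alpha}\mathbf{A}_\infty(v) + O(\eta_* s^{-1-2\alpha})$.

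Integrating, the velocity and position errors are $O(\eta_* s^{-2\alpha})$ and $O(\eta_* s^{1-2\alpha})$ respectively. The latter decays precisely when $\alpha>1/2$, which is where the hypothesis enters. Comparing the backward flows from times $t_1<t_2$ down to time $1$ then gives $|W(t_2)-W(t_1)|\lesssim \eta_* t_1^{1-2\alpha}$, after composing with the bounded-Lipschitz flow $\Phi(1)^{-1}$. Summing over dyadic times gives the limit $W_+$ with the stated rate. The extra factor $\eta_*$ in \eqref{eq: modified scattering} comes from $\|\nabla f_0\|_\infty\le\eta_*$.

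The main obstacle is the uniform field expansion. There are two difficulties. First, $\mathbf{A}_\infty$ is built from $\mathcal{V}^+$, so I need $\mathcal{V}(s)-\mathcal{V}^+=O(\eta_* s^{-\alpha})$ together with Lipschitz bounds on $\mathcal{V}^+$ (Lemma~\ref{lemma: properties of the momentum limit}), so that $\mathbf{A}_\infty$ is $C^1$ with norm $O(\eta_*)$. Second, the Jacobian of $(\tilde x,\tilde v)\mapsto \mathcal{X}(s)/s$ must be controlled so that the singular convolution can be estimated. I would first try the change of variables $z=\mathcal{X}(s,0,\tilde x,\tilde v)$, which is measure preserving, and split $\nabla w$ into its near and far parts. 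The near part uses $\|\rho_f(s)\|_\infty\lesssim s^{-3}$. The far part uses the Lipschitz difference of $\nabla w$ weighted by $\langle \tilde x\rangle f_0$.
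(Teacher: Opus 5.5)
\paragraph{There is a genuine gap in the uniform convergence claim.} The bound $|W(t)-W_+|\lesssim\eta_* t^{-(2\alpha-1)}$ uniformly on $\mathbb{R}^6$ is false. Consequently, your final mean value step, which uses only $\|\nabla f_0\|_{L^\infty}\le\eta_*$, does not close.

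The faulty step is ``plugging in $y=\mathcal{X}(s)$, we get $y/s=v+O(s^{-\alpha})$.'' Along the backward characteristic issued from $\Phi^{\mathrm{ref}}(t)(x,v)$ one actually has
\[
\mathcal{X}(s)/s=v+x/s+O\big(\eta_*(s^{-\alpha}+\langle x\rangle s^{-1})\big).
\]
You dropped the term $x/s$, and it is not small uniformly in $x$. The acceleration error is therefore only $\lesssim \eta_* s^{-1-\alpha}\min\{1,\langle x\rangle/s\}+\eta_*^2 s^{-1-2\alpha}$. The part of $W(t)-W_+$ generated on $s\ge t$ is then of order $\eta_*\min\{\langle x\rangle t^{-\alpha},\langle x\rangle^{1-\alpha}\}$. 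For fixed $t$ this grows like $|x|^{1-\alpha}$.

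This is not an artifact of the estimate. A particle with $|x|\gg t$ has not yet entered the self-similar regime in which the field along it is $s^{-1-\alpha}\mathbf{A}_\infty(v)$. So $W_1(t)(x,v)\approx x-\frac{t^{1-\alpha}-1}{1-\alpha}\mathbf{A}_\infty(v)$ and $W_{1,+}(x,v)$ in general differ by an amount comparable to $\eta_*|x|^{1-\alpha}$. This is exactly the $\frac{x-\tilde x}{t_1}$ term that forces the weight $\langle x\rangle^{-1}$ in Proposition~\ref{prop: modified wave operator}(1).

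\paragraph{How to repair it.} First, prove the weighted rate $|W(t)(x,v)-W_+(x,v)|\lesssim\eta_*\langle x\rangle t^{1-2\alpha}$. Also prove the almost-identity bound $|W_1(t)(x,v)-x|\lesssim\eta_*\langle x\rangle$. Then, in the mean value step, use $\|\langle x\rangle\nabla_{(x,v)}f_0\|_{L^\infty}\lesssim\eta_*$, which is contained in \eqref{eq: small data condition}. Apply it at the intermediate points, where $\langle W_1^\theta\rangle\sim\langle x\rangle$.

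Second, the Cauchy step needs more than you wrote. Your ansatz $\mathcal{X}(s)=x+sv-\frac{s^{1-\alpha}-1}{1-\alpha}\mathbf{A}_\infty(v)+O(\eta_*)$ is off by order $\eta_* s^{1-\alpha}$ at intermediate times. The reason is that $\mathcal{V}(s)\approx v-\alpha^{-1}(s^{-\alpha}-t^{-\alpha})\mathbf{A}_\infty(v)$ is not the velocity $v-s^{-\alpha}\mathbf{A}_\infty(v)$ of the reference curve. Correspondingly, the backward characteristics from $t_1<t_2$ differ at time $t_1$ by a velocity of order $\eta_* t_1^{-\alpha}$. The flow on $[1,t_1]$ would amplify this by a factor $t_1$, so ``composing with a bounded-Lipschitz flow'' is not enough. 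What saves you is that the leading discrepancies cancel exactly in $\mathcal{X}-s\mathcal{V}$ at $s=t_1$. You must combine this cancellation with a Gr\"onwall argument using $\|\nabla\mathbf{E}_f(s)\|_{L^\infty}\lesssim\eta_* s^{-2-\alpha}$.

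A small point on $f^+\in L^1$: a pointwise limit of measure-preserving maps need not preserve measure. However, Fatou's lemma gives $\|f^+\|_{L^1}\le\|f_0\|_{L^1}$, which suffices.

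\paragraph{Comparison with the paper once repaired.} The repaired route genuinely differs from the paper's. The paper works with the forward operator $\tilde\Phi^{\mathrm{ref}}(t)^{-1}\circ\Phi(t)$, which it cannot invert with Lipschitz control. It therefore needs the bounds on $g$ from Section~\ref{sec: Energy estimates for modified distributions}, where the growth $\nabla_v g=O(t^{1-\alpha})$ is offset by the faster $t^{-\alpha}$ convergence of the momentum component. That in turn requires $\mathbf{A}_t$ together with $\nabla^2\mathbf{A}_t$, and the injectivity of $\mathcal{W}^{\mathrm{mod},+}$. Composing $f_0$ directly with your backward operator $\Phi(t)^{-1}\circ\Phi^{\mathrm{ref}}(t)$ avoids all of this and needs only $\nabla\mathbf{A}_\infty$.
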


\begin{remark}[Modified scattering in the strictly long-range regime] $(i)$ In the recent work~\cite{HuangKwon}, Huang and Kwon established modified scattering for the Vlasov--Riesz system in the range \( 1-\delta \le \alpha < 1 \) with small \( \delta > 0 \)\footnote{By numerical calculations, the authors obtained \( \delta = 0.16 \).}. 
The main result of this paper extends this range to \( \frac{1}{2} < \alpha < 1 \) with a more robust, Lagrangian argument that emphasizes the convergence of modified wave operators. In particular, we answer the open question that they pose within Remark 3.5(3) of \cite{HuangKwon}.\\
$(ii)$ The regime \( 0 < \alpha \le \frac{1}{2} \) is not covered by the present approach and is left for future work. 
Treating this case would likely require higher-order corrections to the modified wave operators constructed in Section~\ref{sec: Finite-time modified wave operator}, and appears to depend on additional decay estimates for derivatives of the spatial density, which are currently unavailable.
\end{remark}

\begin{remark}[Quantum mechanical analogy] 
By the quantum--classical correspondence, the Vlasov equation \eqref{eq: VR} is closely related to the nonlinear Hartree equation (NLH) with the same inverse power-law potential:
\[
i\partial_t u = -\frac{1}{2}\Delta u + \Bigg(\frac{\lambda}{|x|^\alpha} * |u|^2 \Bigg) u,
\]
where \( u = u(t,x): [0,\infty) \times \mathbb{R}^3 \to \mathbb{C} \).
These two models share several structural features.
In fact, the long-time asymptotic behavior of NLH has been extensively studied 
\cite{HayashiNaumkin1998-1, HayashiNaumkin1998-2, HayashiNaumkin1998-3, HayashiNaumkin2001, Hirata1995}, with a substantial body of work developed prior to its classical counterpart.
In particular, Hayashi and Naumkin \cite{HayashiNaumkin2001} established small-data global well-posedness for \( 0 < \alpha < 1 \) \cite[Theorem 1.1]{HayashiNaumkin2001} and proved modified scattering for \( \frac{1}{2} < \alpha < 1 \) \cite[Theorem 1.2]{HayashiNaumkin2001}, corresponding to our Theorem \ref{thm: global well-posedness} and Theorem \ref{theorem: modified scattering}, respectively, in this article. 
Furthermore, the rate of convergence \eqref{eq: modified scattering} obtained in our main result coincides with that for the quantum modified scattering. 
To the best of the authors' knowledge, for \( 0 < \alpha \le \frac{1}{2} \), modified scattering is currently unknown for the NLH equation.
Hence, establishing modified scattering for the Vlasov equation \eqref{eq: VR} in this regime may also be challenging, or even impossible.
\end{remark}

\begin{remark}[Initial data condition] $(i)$ To construct global solutions with small initial data satisfying the dispersive estimate \eqref{eq: dispersion estimates-0}, it suffices to assume the smallness condition \eqref{eq: small data condition for decay only}. 
The additional weighted norm bound
\(\| \langle x \rangle f_0 \|_{W_{x,v}^{1,\infty}(\mathbb{R}^6)}\)
imposed in \eqref{eq: small data condition} is needed only to establish the modified scattering result \eqref{eq: modified scattering}. 
Whether this weighted bound is strictly necessary remains an open question. 
In our proof, it is mainly used to control the term \(\frac{x - \tilde{x}}{t}\), which naturally arises in our approach, for instance, as in \eqref{eq: asymptotic with correction}. 
Similar weighted norm conditions are also used in the literature on the nonlinear Hartree equation to obtain modified scattering \cite[Theorem 1.2]{HayashiNaumkin2001}. \\
$(ii)$ It is desirable to establish the convergence estimate \eqref{eq: modified scattering} in \( L^1_{x,v}(\mathbb{R}^6) \). 
Based on our previous work \cite{HongPankavich}, this would require the modified wave operator \( \mathcal{W}^{\textup{mod}}(t) \), introduced in Section~\ref{sec: Finite-time modified wave operator}, to be sufficiently close to the identity in a stronger sense, namely
\[
\| \nabla_{(x,v)} \mathcal{W}^{\textup{mod}}(t)(x,v) - \mathbb{I}_3 \| \lesssim \eta_*.
\] 
However, such a bound is currently out of reach due to the term \( \frac{x-\tilde{x}}{t} \) in \eqref{eq: asymptotic with correction}. 
Herein, we instead employ a weaker version that yields convergence in \( L_{x,v}^\infty(\mathbb{R}^6) \) via Proposition \ref{prop: modified wave operator} (2).
\end{remark}

The asymptotic dynamics of Vlasov-type equations have been studied using two complementary approaches, which are often combined in the literature. 
The first approach is based on the analysis of characteristic flows, as in \eqref{eq: forward characteristic ODE for VP-0}, and primarily relies on ODE techniques. 
This method was introduced in the seminal work of Bardos and Degond \cite{BD}, where global well-posedness for small initial data and decay estimates for the Vlasov--Poisson system were established. The second approach emphasizes the PDE structure of \eqref{eq: VR}, employing dispersive and energy-type estimates. 
This framework was developed more recently by Ionescu, Pausader, Wang, and Widmayer \cite{IPWW22} and applied in various contexts, including the work of Huang and Kwon \cite{HuangKwon}. 
Many authors combine elements from both approaches to effectively analyze the long-time dynamics of the system.

In this article, we primarily follow the approach of Bardos and Degond \cite{BD}, while incorporating an additional ingredient: the finite- and infinite-time modified wave operators
\[
\mathcal{W}^{\textup{mod}}(t),\ \mathcal{W}^{\textup{mod},+}:\mathbb{R}^3\times\mathbb{R}^3 \to \mathbb{R}^3\times\mathbb{R}^3
\]
for the characteristic flows on the phase space \(\mathbb{R}^3\times\mathbb{R}^3\). See \eqref{eq: finite-time modified wave operator} and \eqref{eq: infinite-time modified wave operator} for their precise definition. 
Motivated by their quantum-mechanical analogues \cite{Yajima}, these wave operators were introduced in our previous work \cite{HongPankavich}, where the nonrelativistic limit of scattering states was studied in the short-range case. 
In the present work, the operators are modified so that the reference flow admits proper asymptotic limits (see Section~\ref{sec: construction of the modified reference flow}). From a mathematical perspective, the wave operator formulation for characteristic flows is equivalent to the original formulation, but it provides a more convenient framework to describe the asymptotic dynamics. 
In particular, this formulation shows that the position and momentum components of the modified wave operators \(\mathcal{W}^{\textup{mod}}(t)(x,v)\) converge at different rates (see Proposition~\ref{prop: modified wave operator}). 
This motivates establishing separate bounds on the derivatives \(\nabla_x g(t,x,v)\) and \(\nabla_v g(t,x,v)\) of the modified distribution \(g(t,x,v)\) in Proposition~\ref{prop: global bounds for modified distribution}, together with an appropriate weighted norm bound. 
These bounds are obtained using PDE estimates. Finally, by combining the convergence of the modified wave operator \(\mathcal{W}^{\textup{mod}}(t)(x,v) \to \mathcal{W}^{\textup{mod},+}(x,v)\) with the bounds on \(g(t)\), we are able to complete the proof of modified scattering.

\subsection{Organization of the paper}
The remainder of the paper is organized as follows. 
In Section~\ref{sec: Global well-posedness with dispersion bounds}, we present the preliminary small-data theory for the Vlasov--Riesz system, including global well-posedness, the analysis of the associated characteristic flow, and dispersion bounds (Theorem~\ref{thm: global well-posedness}). 
Section~\ref{sec: Construction of the modified wave operator} is devoted to the construction of the finite- and infinite-time modified wave operators for the characteristic flows, together with their convergence estimates (Proposition~\ref{prop: modified wave operator}). 
In Section~\ref{sec: Energy estimates for modified distributions}, we establish weighted $W^{1,\infty}$ bounds for the modified distribution, including bounds on derivatives and weighted norms (Proposition~\ref{prop: global bounds for modified distribution}). 
Finally, Section~\ref{sec: proof of modified scattering} combines these results to complete the proof of modified scattering.

\subsection{Notation}
For \( k \in \mathbb{N}_0 \) and \( r \in [1, \infty] \), let \( W^{k,r}(\mathbb{R}^d) \) denote the standard Sobolev space with norm
\[
\|u\|_{W^{k,r}(\mathbb{R}^d)} := \sum_{|m| \le k} \|\nabla^m u\|_{L^r(\mathbb{R}^d)},
\]
where \( m = (m_1, \dots, m_d) \) is a multi-index with \( |m| = m_1 + \cdots + m_d \). 
We define the mixed-norm space \( W_x^{k, r}(\mathbb{R}^3; W_v^{k', r'}(\mathbb{R}^3)) \) with norm
\begin{equation}\label{eq: mixed norm definition}
\|f\|_{W_x^{k,r}(\mathbb{R}^3; W_v^{k', r'}(\mathbb{R}^3))} := \sum_{|m| \le k, |m'| \le k'} \|\nabla_x^{m} \nabla_v^{m'} f\|_{L_x^{r}(\mathbb{R}^3; L_v^{r'}(\mathbb{R}^3))},
\end{equation}
which can be written explicitly as
\[
\sum_{|m| \le k, |m'| \le k'} \left\{ \int_{\mathbb{R}^3} \left( \int_{\mathbb{R}^3} \big| \nabla_x^{m} \nabla_v^{m'} f \big|^{r'} \, dv \right)^{\frac{r}{r'}} dx \right\}^{\frac{1}{r}}.
\]
Throughout this article, we abbreviate \( W_x^{k,r}(\mathbb{R}^3; W_v^{k', r'}(\mathbb{R}^3)) \) as \( W_x^{k, r} W_v^{k', r'} \) when there is no ambiguity. 
Similarly, interchanging the roles of \( x \) and \( v \), we write \(W_v^{k, r} W_x^{k', r'} := W_v^{k, r}(\mathbb{R}^3; W_x^{k', r'}(\mathbb{R}^3))\).

\subsection{Acknowledgement}
Y. Hong was supported by the National Research Foundation of Korea (NRF) grant funded by the Korean government (MSIT) (No. RS-2023-00219980 and RS-2026-25479401).
S. Pankavich was supported by the US National Science Foundation under award DMS-2107938.

\section{Global well-posedness with dispersion bounds}\label{sec: Global well-posedness with dispersion bounds}

In this section, we show that if the initial data \( f_0 \) is sufficiently small in an appropriate norm, then the Vlasov--Riesz system \eqref{eq: VR} admits a unique global-in-time solution, and the associated force field satisfies decay estimates.

\begin{theorem}[Global well-posedness for small data solutions]\label{thm: global well-posedness}
For \( 0 < \alpha < 1 \), there exists a small constant \( \eta_* > 0 \) such that the following holds. If \( f_0 \geq 0 \) and
\begin{equation}\label{eq: smallness condition for GWP}
\| f_0 \|_{W_{x,v}^{1,1}(\mathbb{R}^6)\cap W_{x,v}^{1,\infty}(\mathbb{R}^6)} + \| f_0 \|_{W_x^{1,1}(\mathbb{R}^3; W_v^{1,\infty}(\mathbb{R}^3))\cap W_v^{1,1}(\mathbb{R}^3; W_x^{1,\infty}(\mathbb{R}^3))} \leq \eta_*,
\end{equation}
then the Vlasov--Riesz system \eqref{eq: VR} admits a unique global solution \( f(t)\in C_t([0,\infty);L_{x,v}^\infty(\mathbb{R}^6)) \) with initial data \( f_0 \) such that
\begin{equation}\label{eq: dispersion estimates-density}
\sup_{t \geq 0}
\Big(
\langle t \rangle^{3}
\| \rho_{f(t)} \|_{W_x^{1,\infty}(\mathbb{R}^3)}
\Big)
\lesssim \eta_*.
\end{equation}
As a consequence, we have
\begin{equation}\label{eq: dispersion estimates}
\sup_{t \geq 0}
\bigg(
\langle t \rangle^{1+\alpha}
\| \mathbf{E}_f(t) \|_{L_x^{\infty}(\mathbb{R}^3)} +
\langle t \rangle^{2+\alpha}
\|\nabla\mathbf{E}_f(t) \|_{L_x^{\infty}(\mathbb{R}^3)} +
\langle t \rangle^{3}
\| \nabla^2 \mathbf{E}_f(t) \|_{L_x^\infty(\mathbb{R}^3)}
\bigg)
\lesssim \eta_*.
\end{equation}
\end{theorem}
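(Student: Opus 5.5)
We run a Bardos--Degond bootstrap on the characteristic flow \eqref{eq: forward characteristic ODE for VP-0}. Fix a large constant \(C_0\). On a maximal interval \([0,T)\) of the local solution we assume
\[
\langle t\rangle^{1+\alpha}\|\mathbf{E}_f(t)\|_{L^\infty_x}+\langle t\rangle^{2+\alpha}\|\nabla\mathbf{E}_f(t)\|_{L^\infty_x}+\langle t\rangle^{3}\|\nabla^2\mathbf{E}_f(t)\|_{L^\infty_x}\le C_0\eta_* .
\]
Under this assumption we recover the same bounds with \(C_0/2\) in place of \(C_0\); continuity then gives \(T=\infty\). Local existence and uniqueness follow from the standard Lipschitz theory for the characteristics, since \(\nabla\mathbf{E}_f\) is bounded on finite intervals. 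Note also that \(\|f(t)\|_{L^p_{x,v}}=\|f_0\|_{L^p_{x,v}}\), because the flow \(\Phi(t)\) preserves volume.

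\textbf{Step 1: characteristic estimates.} We integrate the ODE backward from time \(t\). Since \(\int_0^\infty\|\mathbf{E}_f\|_{L^\infty}\,d\tau\lesssim C_0\eta_*\), velocities move by \(O(\eta_*)\). The key quantity is the Jacobian \(\partial_v\mathcal{X}(0,t,x,v)\), which should equal \(-t(\mathbb{I}_3+O(\eta_*))\). Write \(\mathcal{X}(s,t)=x-(t-s)v+\int_s^t(\tau-s)\,\mathbf{E}_f(\tau,\mathcal{X}(\tau))\,d\tau\) and differentiate in \(v\). This gives a Gronwall-type inequality for \(D(s):=\partial_v\mathcal{X}(s,t)+(t-s)\mathbb{I}_3\), driven by \(\int_s^t(\tau-s)(t-\tau)\|\nabla\mathbf{E}_f(\tau)\|\,d\tau\). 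With \(\|\nabla\mathbf{E}_f\|\lesssim\langle\tau\rangle^{-2-\alpha}\), this integral is \(\lesssim \eta_* t\), uniformly in \(s\in[0,t]\). Similarly \(\partial_x\mathcal{X}\), \(\partial_x\mathcal{V}\) and \(\partial_v\mathcal{V}\) are bounded, with \(\partial_v\mathcal{V}(0,t)=\mathbb{I}_3+O(\eta_*)\).

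\textbf{Step 2: density decay \eqref{eq: dispersion estimates-density}.} Write \(\rho_f(t,x)=\int f_0(\mathcal{X}(0,t,x,v),\mathcal{V}(0,t,x,v))\,dv\) and change variables \(v\mapsto y=\mathcal{X}(0,t,x,v)\). By Step 1 this map is a diffeomorphism with Jacobian \(\approx t^3\), so
\[
|\rho_f(t,x)|\lesssim t^{-3}\int\sup_{u}|f_0(y,u)|\,dy\le t^{-3}\|f_0\|_{L^1_xW^{1,\infty}_v}.
\]
Here we use the mixed norm in \eqref{eq: smallness condition for GWP}, since \(L^1_xL^\infty_v\) is controlled by it. For \(\nabla_x\rho_f\) we differentiate under the integral. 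The chain rule produces \(\nabla_xf_0\) and \(\nabla_vf_0\) composed with the flow, multiplied by the bounded matrices \(\partial_x\mathcal{X}(0,t)\) and \(\partial_x\mathcal{V}(0,t)\). The same change of variables then gives \(t^{-3}\|f_0\|_{W^{1,1}_xW^{1,\infty}_v}\). For \(t\le1\) we simply use \(\|\rho_f\|_{W^{1,\infty}}\lesssim\|f_0\|_{W^{1,\infty}_vW^{1,1}_x}\) type bounds, which is why the companion norm in \eqref{eq: smallness condition for GWP} appears. We also have \(\|\rho_f\|_{L^1}=\|f_0\|_{L^1}\).

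\textbf{Step 3: field bounds \eqref{eq: dispersion estimates}.} We use \(|\nabla w|\sim|x|^{-1-\alpha}\) and interpolate between \(L^1\) and \(L^\infty\) by splitting at radius \(R\):
\[
|\mathbf{E}_f|\lesssim R^{2-\alpha}\|\rho\|_\infty+R^{-1-\alpha}\|\rho\|_1 .
\]
Choosing \(R\sim t\) gives \(\|\mathbf{E}_f\|_{L^\infty}\lesssim\eta_*t^{-1-\alpha}\). For \(\nabla\mathbf{E}_f=-\nabla w*\nabla\rho\), the kernel \(\nabla^2w\sim|x|^{-2-\alpha}\) is not integrable at infinity against \(\|\rho\|_\infty\) alone, so we place one derivative on \(\rho\) near the origin. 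This gives \(\lesssim R^{2-\alpha}\|\nabla\rho\|_\infty+R^{-2-\alpha}\|\rho\|_1\lesssim\eta_*t^{-2-\alpha}\), using the boundary term and \(R\sim t\). For \(\nabla^2\mathbf{E}_f=-\nabla^2w*\nabla\rho\) the kernel is \(|x|^{-2-\alpha}\), which is locally integrable in 3D. Splitting gives \(R^{1-\alpha}\|\nabla\rho\|_\infty\) plus a far part; the far part, after integrating by parts, is \(R^{-3-\alpha}\|\rho\|_1\) plus a boundary term. Optimizing over \(R\) yields at least \(t^{-3}\): taking \(R=1\) gives \(\|\nabla\rho\|_\infty+\|\rho\|_\infty\)-type bounds \(\lesssim t^{-3}\), which is exactly the stated rate. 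Choosing \(\eta_*\) small relative to \(C_0\) closes the bootstrap.

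\textbf{Main obstacle.} The hardest step is Step 1: proving the Jacobian \(\partial_v\mathcal{X}(0,t)\) is a uniformly nondegenerate \(-t\,(\mathbb{I}_3+O(\eta_*))\) and that the map \(v\mapsto y\) is globally injective. The weak decay \(t^{-2-\alpha}\) of \(\nabla\mathbf{E}_f\) is just enough, because \(\int\tau\,\langle\tau\rangle^{-2-\alpha}\,d\tau<\infty\) requires \(\alpha>0\). I would first try a Gronwall argument on \(D(s)/(t-s)\).
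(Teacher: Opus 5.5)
Your overall architecture is the paper's: a Bardos--Degond bootstrap with Gronwall bounds on the backward Jacobians, the change of variables $v\mapsto\mathcal{X}(0,t,x,v)$ for $\langle t\rangle^{-3}$ decay of $\rho_f$ and $\nabla_x\rho_f$, $L^1$--$L^\infty$ splitting for the field, and a near/far split for $\nabla^2\mathbf{E}_f$. However, your $\nabla\mathbf{E}_f$ estimate in Step 3 fails as written, and the whole argument depends on it.

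Step 2 only gives $\|\nabla_x\rho_f(t)\|_{L^\infty}\lesssim\eta_*\langle t\rangle^{-3}$. Your near-field term $R^{2-\alpha}\|\nabla\rho_f\|_{L^\infty}$ at $R\sim t$ is therefore $\eta_* t^{-1-\alpha}$, not $t^{-2-\alpha}$. Optimizing in $R$, by balancing against $R^{-2-\alpha}\|\rho_f\|_{L^1}$ at $R=t^{3/4}$, only gives $t^{-3/2-3\alpha/4}$. Step 1 needs $\int_0^\infty \tau\|\nabla\mathbf{E}_f(\tau)\|_{L^\infty}\,d\tau<\infty$, i.e.\ decay strictly faster than $\tau^{-2}$. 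So with $R\sim t$ the bootstrap does not close for any $\alpha\in(0,1)$, and even after optimizing it fails for $\alpha\le 2/3$. In no case do you obtain the stated $t^{-2-\alpha}$ rate.

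The fix is to not move the derivative onto $\rho_f$ at all. Since $2+\alpha<3$, $|\nabla^2 w(z)|\lesssim|z|^{-2-\alpha}$ is locally integrable, and the distributional and pointwise Hessians agree (the boundary term on $|z|=\epsilon$ is $O(\epsilon^{1-\alpha})$). Hence $\nabla\mathbf{E}_f=-\nabla^2 w*\rho_f$, and splitting at $R\sim t$ gives $R^{1-\alpha}\|\rho_f\|_{L^\infty}+R^{-2-\alpha}\|\rho_f\|_{L^1}\lesssim\eta_* t^{-2-\alpha}$. This is exactly Lemma~\ref{lemma: interpolation inequality} with $m=2$. Your concern that $|z|^{-2-\alpha}$ is not integrable at infinity against $\|\rho\|_{L^\infty}$ is handled by using $\|\rho_f\|_{L^1}$ in the far field, not by differentiating $\rho_f$.

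There are also smaller issues.
\begin{itemize}
\item Your claim $\partial_v\mathcal{V}(0,t)=\mathbb{I}_3+O(\eta_*)$ is false for large $t$. Perturbing $v$ moves $\mathcal{X}(\tau,t,x,v)$ by roughly $t$ for $\tau\lesssim 1$, where $\nabla\mathbf{E}_f=O(\eta_*)$, so in general only $\partial_v\mathcal{V}(0,t)-\mathbb{I}_3=O(\eta_* t)$ holds, as in Lemma~\ref{lemma: dispersion bounds imply characteristic flow estimates}. This is harmless: for $t\ge 1$ only $\partial_v\mathcal{X}$, $\partial_x\mathcal{X}$ and $\partial_x\mathcal{V}$ enter, and for $t\le1$ your claim is true.
\item For $t\le 1$ the relevant norm is $L^1_vL^\infty_x$ (the $W^{1,1}_vW^{1,\infty}_x$ part), not $W^{1,\infty}_vW^{1,1}_x$.
\item The global injectivity you single out as the main obstacle is immediate. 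The uniform bound $\|\partial_v\mathcal{X}(0,t)+t\mathbb{I}_3\|\le t/10$ gives $|\mathcal{X}(0,t,x,v_1)-\mathcal{X}(0,t,x,v_2)|\ge\frac{9t}{10}|v_1-v_2|$.
\item ``Standard Lipschitz theory for the characteristics'' only gives uniqueness of characteristics for a fixed field. Uniqueness of the nonlinear solution needs a stability estimate bounding the field difference by the characteristic difference, using $\nabla f_0$ and the interpolation lemma. That is what the paper's contraction for the iterates on $[0,1]$ provides.
\end{itemize}
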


\subsection{Proof of global well-posedness}

We follow the argument of Bardos--Degond~\cite{BD}. In particular, the two lemmas below provide key estimates for the contraction mapping argument.

\begin{lemma}[Dispersion bounds imply characteristic flow estimates]\label{lemma: dispersion bounds imply characteristic flow estimates}
Let  $0 < \alpha < 1$. Suppose that a vector field \(\mathbf{E} = \mathbf{E}(t,x) : [0,\infty) \times \mathbb{R}^3 \to \mathbb{R}^3\) satisfies the global-in-time decay bounds
\begin{equation}\label{eq: E uniform decay bounds}
\sup_{t \ge 0}
\Big(
\langle t \rangle^{1+\alpha}
\| \mathbf{E}(t) \|_{L_x^\infty(\mathbb{R}^3)}
+
\langle t \rangle^{2+\alpha}
\| \nabla \mathbf{E}(t) \|_{L_x^\infty(\mathbb{R}^3)}
\Big)\leq C_0.
\end{equation}
Then, for each $t>0$ and $(x,v)\in\mathbb{R}^3\times\mathbb{R}^3$, the backward-in-time Hamiltonian ODE system
\begin{equation}\label{eq: Hamiltonian ODE0}
\left\{
\begin{aligned}
\partial_s \Big(\mathcal{X}(s,t,x,v), \mathcal{V}(s,t,x,v)\Big) &= \Big(\mathcal{V}(s,t,x,v), \mathbf{E}\big(s, \mathcal{X}(s,t,x,v)\big)\Big), \\
\big(\mathcal{X}(t,t,x,v),\mathcal{V}(t,t,x,v)\big) &= (x,v)
\end{aligned}
\right.
\end{equation}
admits a unique solution $(\mathcal{X}(s,t,x,v), \mathcal{V}(s,t,x,v)): [0,t]\to\mathbb{R}^3\times\mathbb{R}^3$ such that
\begin{equation}\label{eq: derivative characteristic bound}
\left\{
\begin{aligned}
\| \nabla_x \mathcal{X}(s,t,x,v) - \mathbb{I}_3 \|
+ \| \nabla_x \mathcal{V}(s,t,x,v) \|
&\leq KC_0, \\[0.3em]
\big\| \nabla_v \mathcal{X}(s,t,x,v) + (t-s)\mathbb{I}_3 \big\|
+ \| \nabla_v \mathcal{V}(s,t,x,v) - \mathbb{I}_3 \|
&\leq KC_0(t-s)
\end{aligned}
\right.
\end{equation}
for some $K > 0$.
\end{lemma}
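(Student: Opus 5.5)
Existence and uniqueness of the backward characteristics follow from Cauchy--Lipschitz: by \eqref{eq: E uniform decay bounds}, $\mathbf{E}(s,\cdot)$ is bounded and globally Lipschitz in $x$, uniformly for $s\in[0,t]$. The vector field $(x,v)\mapsto(v,\mathbf{E}(s,x))$ is therefore globally Lipschitz on $\mathbb{R}^6$, so the solution exists on all of $[0,t]$ and is unique. (We assume $\mathbf{E}$ is continuous in $s$, or at least measurable, so that Carathéodory theory applies.) The flow map is $C^1$ in $(x,v)$. Its derivatives satisfy the linearized system
\[
\partial_s \nabla_z\mathcal{X} = \nabla_z\mathcal{V},\qquad \partial_s\nabla_z\mathcal{V} = \nabla\mathbf{E}(s,\mathcal{X})\,\nabla_z\mathcal{X},\qquad z\in\{x,v\},
\]
with terminal data $(\nabla_x\mathcal{X},\nabla_x\mathcal{V})=(\mathbb{I}_3,0)$ and $(\nabla_v\mathcal{X},\nabla_v\mathcal{V})=(0,\mathbb{I}_3)$ at $s=t$.

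For the $x$-derivatives, set $A(s)=\nabla_x\mathcal{X}(s)-\mathbb{I}_3$ and $B(s)=\nabla_x\mathcal{V}(s)$. Integrating from $s$ to $t$ gives
\[
B(s)=-\int_s^t\nabla\mathbf{E}(\tau,\mathcal{X})(\mathbb{I}_3+A(\tau))\,d\tau,\qquad A(s)=-\int_s^t B(\tau)\,d\tau .
\]
We run a bootstrap with the hypothesis $\|A(\tau)\|\le 1$ on $[s,t]$. Then $\|B(s)\|\le 2C_0\int_s^\infty\langle\tau\rangle^{-2-\alpha}\,d\tau\lesssim C_0\langle s\rangle^{-1-\alpha}$. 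This in turn gives $\|A(s)\|\le\int_s^t\|B\|\lesssim C_0\langle s\rangle^{-\alpha}\le C_0$. Both bounds are uniform in $t$. To close the bootstrap we need $C_0$ small. Alternatively, we avoid smallness by substituting the formula for $B$ into the one for $A$:
\[
A(s)=\int_s^t(\tau-s)\,\nabla\mathbf{E}(\tau)(\mathbb{I}_3+A(\tau))\,d\tau .
\]
Gronwall with the kernel $(\tau-s)\langle\tau\rangle^{-2-\alpha}\le\langle\tau\rangle^{-1-\alpha}$, which is integrable, gives $\sup\|A\|\le C_0e^{cC_0}$. In either case we obtain a bound $KC_0$ with $K$ depending on $C_0$ (or absolute once $C_0\le1$). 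This is enough for the smallness regime where the lemma is used.

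For the $v$-derivatives, set $P(s)=\nabla_v\mathcal{X}(s)+(t-s)\mathbb{I}_3$ and $Q(s)=\nabla_v\mathcal{V}(s)-\mathbb{I}_3$. These satisfy
\[
Q(s)=-\int_s^t\nabla\mathbf{E}(\tau,\mathcal{X})\big(P(\tau)-(t-\tau)\mathbb{I}_3\big)\,d\tau,\qquad P(s)=-\int_s^t Q(\tau)\,d\tau,
\]
with $P(t)=0$ and $Q(t)=0$. The forcing term is $\int_s^t(t-\tau)\|\nabla\mathbf{E}(\tau)\|\,d\tau\le C_0(t-s)\int_s^t\langle\tau\rangle^{-2-\alpha}\,d\tau\lesssim C_0(t-s)$. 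We therefore bootstrap on the hypothesis $\|P(\tau)\|\le (t-\tau)$ and $\|Q(\tau)\|\le 1$. This gives $\|Q(s)\|\lesssim C_0(t-s)\langle s\rangle^{-1-\alpha}\le C_0(t-s)$. Integrating, $\|P(s)\|\le\int_s^t\|Q\|\lesssim C_0\int_s^t(t-\tau)\langle\tau\rangle^{-1-\alpha}\,d\tau\le C_0(t-s)\int_0^\infty\langle\tau\rangle^{-1-\alpha}\,d\tau\lesssim C_0(t-s)$. The key point is that the weight $(t-\tau)\le(t-s)$ can be factored out while $\langle\tau\rangle^{-1-\alpha}$ remains integrable.

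The main obstacle is ensuring that the bounds are uniform in $t$ and linear in $(t-s)$ despite the growing factor $(t-\tau)$ in the $v$-equations. This is handled by the factoring above, which uses the $\langle\tau\rangle^{-2-\alpha}$ decay of $\nabla\mathbf{E}$. The bootstrap closes for $C_0$ small. Otherwise we use the Gronwall version, with kernel $\|\nabla\mathbf{E}(\tau)\|(\tau-s)\le C_0\langle\tau\rangle^{-1-\alpha}$, which yields a constant $K=K(C_0)$.
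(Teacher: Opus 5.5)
Your proof is correct and is essentially the paper's argument: you write the integral equations for the derivatives of the backward characteristics, bound the kernel using $(\tau-s)\langle\tau\rangle^{-2-\alpha}\le\langle\tau\rangle^{-1-\alpha}$ while pulling out $(t-\tau)\le t-s$, and close with Gr\"onwall. The paper does this in one step for a combined quantity whose $\nabla_v$-terms are normalized by $t-s$, which matches your Gr\"onwall variant, and its $K$ likewise depends on $C_0$ through an exponential factor. One small slip: in the $v$-bootstrap, your estimate never improves the hypothesis $\|Q(\tau)\|\le 1$ (it only gives $\|Q(0)\|\lesssim C_0 t$, and this growth is genuine), but that hypothesis is also never used, so drop it and bootstrap on $\|P(\tau)\|\le t-\tau$ alone.
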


\begin{proof}
Throughout the proof, we fix $t > 0$ and $(x,v) \in \mathbb{R}^3 \times \mathbb{R}^3$.
For $0 \le s \le t$, we set
\[
\big(\tilde{\mathcal{X}}_s, \tilde{\mathcal{V}}_s\big)
:= \big(\mathcal{X}(s,t,x,v), \mathcal{V}(s,t,x,v)\big).
\]
From \eqref{eq: Hamiltonian ODE0}, the characteristic curves satisfy the integral
representations
\[
(\tilde{\mathcal{X}}_s, \tilde{\mathcal{V}}_s)
=\bigg( x - (t-s)v
+ \int_s^t (s_1 - s)\,
\mathbf{E}(s_1, \tilde{\mathcal{X}}_{s_1}) \, ds_1, v
- \int_s^t
\mathbf{E}(s_1, \tilde{\mathcal{X}}_{s_1}) \, ds_1\bigg).
\]
Differentiating with respect to $x$ and $v$, we obtain
\[
\left\{
\begin{aligned}
\nabla_{(x,v)} \tilde{\mathcal{X}}_s
&=\big[\mathbb{I}_3\ \  -(t-s)\mathbb{I}_3\big]^\top+ \int_s^t (s_1 - s)\,
\nabla \mathbf{E}(s_1, \tilde{\mathcal{X}}_{s_1})
\nabla_{(x,v)} \tilde{\mathcal{X}}_{s_1} \, ds_1, \\
\nabla_{(x,v)} \tilde{\mathcal{V}}_s&= [0\ \  \mathbb{I}_3]^\top- \int_s^t
\nabla \mathbf{E}(s_1, \tilde{\mathcal{X}}_{s_1})
\nabla_{(x,v)} \tilde{\mathcal{X}}_{s_1} \, ds_1.
\end{aligned}
\right.
\]
Applying the decay bound \eqref{eq: E uniform decay bounds}, we obtain 
\[
\begin{aligned}
&\| \nabla_x \tilde{\mathcal{X}}_s - \mathbb{I}_3 \|+\bigg\|\frac{\nabla_v \tilde{\mathcal{X}}_s}{t-s} +\mathbb{I}_3 \bigg\|+\| \nabla_x \tilde{\mathcal{V}}_s \|+\frac{\|\nabla_v \tilde{\mathcal{V}}_s-\mathbb{I}_3\|}{t-s}\\
&\lesssim\int_s^t 
\frac{C_0}
{\langle s_1 \rangle^{1+\alpha}}\bigg(\| \nabla_x \tilde{\mathcal{X}}_{s_1}\|+\bigg\|\frac{\nabla_v \tilde{\mathcal{X}}_{s_1}}{t-s_1} \bigg\|\bigg ) \, ds_1\\
&\lesssim
 \int_s^t \frac{C_0}{\langle s_1 \rangle^{1+\alpha}}\,ds_1
+  \int_s^t
\frac{C_0}
{\langle s_1 \rangle^{1+\alpha}}\bigg\{\| \nabla_x \tilde{\mathcal{X}}_{s_1} - \mathbb{I}_3 \|+\bigg\|\frac{\nabla_v \tilde{\mathcal{X}}_{s_1}}{t-s_1} +\mathbb{I}_3 \bigg\|\bigg\} \, ds_1,
\end{aligned}
\]
which yields \eqref{eq: derivative characteristic bound} by Gr\"onwall's inequality as $\alpha > 0$.
\end{proof}

Let 
\[
\Phi(t) = \big(\Phi_1(t), \Phi_2(t)\big) : [0,\infty) \times \mathbb{R}^3 \times \mathbb{R}^3 \to \mathbb{R}^3 \times \mathbb{R}^3
\]
denote the forward-in-time characteristic flow associated with \(\mathbf{E}\), defined as the solution to the ODE system
\begin{equation}\label{eq: forward characteristic ODE}
\left\{
\begin{aligned}
\partial_t \Phi(t)(x,v) &=\Big(\Phi_2(t)(x,v), \mathbf{E}\big(t, \Phi_1(t)(x,v)\big)\Big), \\
\Phi(0)(x,v) &= (x,v)
\end{aligned}
\right.
\end{equation}
for each \((x,v) \in \mathbb{R}^6\). 
Then, the pushforward of \(f_0\) by \(\Phi(t)\) can be expressed as
\[
\big(\Phi(t)_\# f_0\big)(x,v) = f_0\big(\Phi(t)^{-1}(x,v)\big)
= f_0\big(\mathcal{X}(0,t,x,v), \mathcal{V}(0,t,x,v)\big).
\]

\begin{remark}\label{remark: PDE ODE relation}
The pushforward measure \(f(t) = \Phi(t)_\# f_0 = f_0(\Phi(t)^{-1}(\cdot,\cdot))\) is a weak solution to the Vlasov equation. Indeed, for any test function
\(\varphi \in \mathcal{S}(\mathbb{R}^3 \times \mathbb{R}^3)\),
\[
\begin{aligned}
\frac{d}{dt} \langle \varphi, f(t) \rangle
&= \frac{d}{dt} \big\langle \varphi\big(\Phi(t)(x,v)\big), f_0(x,v) \big\rangle \\
&= \Big\langle \nabla_x \varphi \big(\Phi(t)(x,v)\big) \cdot \Phi_2(t)(x,v)+ \nabla_v \varphi \big(\Phi(t)(x,v)\big) \cdot \mathbf{E}(t, \Phi_1(t)(x,v)), f_0(x,v) \Big\rangle \\
&= - \big\langle \varphi, \nabla_{(x,v)} \cdot \big(v f(t), \mathbf{E}(t)\big) \Phi(t)_\# f_0 \big\rangle = - \big\langle \varphi, v \cdot \nabla_x f(t) + \mathbf{E}(t) \cdot \nabla_v f(t) \big\rangle,
\end{aligned}
\]
where \(\langle \cdot, \cdot \rangle = \langle \cdot, \cdot \rangle_{L_{x,v}^2(\mathbb{R}^3 \times \mathbb{R}^3)}\).
\end{remark}

The following lemma shows that under the decay assumption \eqref{eq: E uniform decay bounds}, the pushforward measure satisfies dispersion bounds.

\begin{lemma}[Characteristic flow bounds imply dispersion estimates]\label{lemma: characteristic flow bounds imply dispersion estimates}
If \(\Xi(s,t,x,v)=(\mathcal{X}(s,t,x,v), \mathcal{V}(s,t,x,v))\) satisfies the derivative bounds
\begin{equation}\label{eq: derivative bound for density estimate}
\left\{\begin{aligned}
\| \nabla_x \mathcal{X}(0,t,x,v) - \mathbb{I}_3 \|
+ \| \nabla_x \mathcal{V}(0,t,x,v) \|
&\leq \frac{1}{10}, \\[0.3em]
\big\| \nabla_v \mathcal{X}(0,t,x,v) + t\mathbb{I}_3 \big\|
+ \| \nabla_v \mathcal{V}(0,t,x,v) - \mathbb{I}_3 \|
&\leq \frac{t}{10},
\end{aligned}\right.
\end{equation}
then the density of the pushforward via the associated Hamiltonian flow $\Phi(t)$ satisfies
\begin{align*}
 \sup_{t\geq0} & \left (\|\rho_{\Phi(t)_\# f_0} \|_{W_x^{1,1}(\mathbb{R}^3)} + \langle t \rangle^3\| \rho_{\Phi(t)_\# f_0} \|_{W_x^{1,\infty}(\mathbb{R}^3)} \right )\\
    & \leq 4 
\| f_0 \|_{W_{x,v}^{1,1} \cap W_x^{1,1}(\mathbb{R}^3; W_v^{1,\infty}(\mathbb{R}^3))\cap W_v^{1,1}(\mathbb{R}^3; W_x^{1,\infty}(\mathbb{R}^3))}.
\end{align*}
\end{lemma}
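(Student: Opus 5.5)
We work with the representation $\rho(t,x)=\int f_0(\mathcal{X}(0,t,x,v),\mathcal{V}(0,t,x,v))\,dv$ and estimate separately the $L^1$-type bounds (valid for all $t$) and the $L^\infty$-type bounds, which for $t\ge1$ will come from a change of variables in $v$ and for $t\le1$ directly.

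\emph{$L^1$ bounds.} Since $\Phi(t)$ is a Hamiltonian flow it preserves Lebesgue measure, so $\|\rho(t)\|_{L^1}\le\|f_0\|_{L^1_{x,v}}$. For the gradient, the chain rule gives
\[
\nabla_x\rho(t,x)=\int \Big[(\nabla_x f_0)(\Xi)\,\nabla_x\mathcal{X}(0,t,x,v)+(\nabla_v f_0)(\Xi)\,\nabla_x\mathcal{V}(0,t,x,v)\Big]dv ,\qquad \Xi=\Xi(0,t,x,v).
\]
By the first line of \eqref{eq: derivative bound for density estimate}, $\|\nabla_x\mathcal{X}(0)\|\le 11/10$ and $\|\nabla_x\mathcal{V}(0)\|\le1/10$. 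Integrating in $x$ and using measure preservation again yields $\|\nabla_x\rho(t)\|_{L^1}\le \tfrac{11}{10}\|\nabla_{x,v}f_0\|_{L^1}$.

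\emph{$L^\infty$ bounds for $t\ge1$.} Fix $x$ and consider the map $v\mapsto y:=\mathcal{X}(0,t,x,v)$. The second line of \eqref{eq: derivative bound for density estimate} gives
\[
\nabla_v\mathcal{X}=-t(\mathbb{I}_3+M),\qquad \|M\|\le 1/10 .
\]
This map is therefore a global diffeomorphism of $\mathbb{R}^3$: it is injective because $-t^{-1}(y(v)-y(v'))=(v-v')+O(\tfrac1{10}|v-v'|)$, and it is proper with everywhere invertible Jacobian, hence surjective. Its Jacobian satisfies $|\det\nabla_v\mathcal{X}|\ge t^3(9/10)^3$. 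Changing variables,
\[
|\rho(t,x)|\le (10/9)^3 t^{-3}\int |f_0(y,\mathcal{V}(0,t,x,v(y)))|\,dy\le (10/9)^3t^{-3}\|f_0\|_{L^1_xL^\infty_v}.
\]
The gradient is handled the same way. The term $(\nabla_xf_0)\nabla_x\mathcal{X}$ is bounded by $\tfrac{11}{10}(10/9)^3t^{-3}\|\nabla_xf_0\|_{L^1_xL^\infty_v}$. The term $(\nabla_vf_0)\nabla_x\mathcal{V}$ is bounded by $\tfrac1{10}(10/9)^3t^{-3}\|\nabla_vf_0\|_{L^1_xL^\infty_v}$.

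\emph{$L^\infty$ bounds for $t\le1$.} Here we instead fix $x$ and use $\nabla_v\mathcal{V}=\mathbb{I}_3+N$ with $\|N\|\le t/10\le1/10$. The same diffeomorphism argument applies to $v\mapsto u:=\mathcal{V}(0,t,x,v)$, with Jacobian determinant at least $(9/10)^3$. This gives
\[
|\rho|+|\nabla_x\rho|\le (10/9)^3\,\tfrac{11}{10}\,\|f_0\|_{W^{1,1}_vW^{1,\infty}_x},
\]
which accounts for the $W_v^{1,1}W_x^{1,\infty}$ norm. Since $\langle t\rangle^3\le 8$ for $t\le1$ and $\langle t\rangle^3\le 8t^3$ for $t\ge1$, collecting the constants gives a bound of the form $C\|f_0\|$. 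The one step we have not checked is that $C\le4$: crude bookkeeping of these numerical factors may not fit under $4$ without using $\|\cdot\|_{L^1}$ versus $L^\infty$ norms jointly.

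\emph{Main obstacle.} The delicate points are the global invertibility of $v\mapsto\mathcal{X}(0,t,x,v)$ and of $v\mapsto\mathcal{V}(0,t,x,v)$, which we justify via near-identity Jacobians and Hadamard's global inverse theorem, and tracking the numerical constant.
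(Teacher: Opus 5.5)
Your proposal is correct and follows the paper's proof essentially step for step. The shared steps are the change of variables $v\mapsto \mathcal{X}(0,t,x,v)$ for $t\ge 1$ and $v\mapsto \mathcal{V}(0,t,x,v)$ for $t\le 1$, the chain rule with the same changes of variables for $\nabla_x\rho$, and measure preservation for the $W^{1,1}$ part; your Hadamard-type injectivity and surjectivity argument supplies a step the paper leaves implicit. On the constant $4$: the paper tracks it no more carefully than you do (it passes from $2t^{-3}$, and from $2$ for $t\le 1$, to $2\langle t\rangle^{-3}$, losing a factor up to $2^{3/2}$), and only some absolute constant is needed where the lemma is used in the proof of Theorem~\ref{thm: global well-posedness}, since the bound is always in terms of $f_0$ and does not compound along the iteration.
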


\begin{proof}
From the integral representation of the pushforward density, we have
\begin{equation}\label{eq: density of pushforward measure}
\rho_{\Phi(t)_\# f_0}(x) = \int_{\mathbb{R}^3} f_0\big(\Xi(0,t,x,v)\big) \, dv,
\end{equation}
where $\Xi(0,t,x,v)=(\mathcal{X}(0,t,x,v),\mathcal{V}(0,t,x,v))$. 
For $t \ge 1$, we change variables using \(v\mapsto y = \mathcal{X}(0,t,x,v)\). Then, it follows from \eqref{eq: derivative bound for density estimate} that
\[
\begin{aligned}
\rho_{\Phi(t)_\# f_0}(x) 
&= \int_{\mathbb{R}^3} f_0\big(y, \mathcal{V}(0,t,x,v(y))\big) 
   \, \Big| \det\Big\{\nabla_v \mathcal{X}\big(0,t,x,v(y)\big)\Big\} \Big|^{-1} \, dy 
\leq \frac{2}{t^3} \, \| f_0 \|_{L_x^1 L_v^\infty}.
\end{aligned}
\]
For $0 \le t \le 1$, we instead change variables \(y = \mathcal{V}(0,t,x,v)\). Then, similarly, by \eqref{eq: derivative characteristic bound} we obtain
\[
\rho_{\Phi(t)_\# f_0}(x) 
\leq 2\| f_0 \|_{L_v^1 L_x^\infty}.
\]
Combining the two cases, we deduce
\begin{equation}\label{eq: characteristic flow bounds imply dispersion estimates, 0 derivative}
\|\rho_{\Phi(t)_\# f_0}\|_{L_x^\infty} \leq \frac{2}{\langle t\rangle^3} \, \| f_0 \|_{L_x^1 L_v^\infty\cap L_v^1 L_x^\infty}.
\end{equation}
Next, differentiating \eqref{eq: density of pushforward measure} and applying \eqref{eq: derivative bound for density estimate} and \eqref{eq: characteristic flow bounds imply dispersion estimates, 0 derivative}, we obtain
$$\begin{aligned}
\|\nabla_x \rho_{\Phi(t)_\# f_0}\|_{L_x^\infty}&=\bigg\|\int_{\mathbb{R}^3} (\nabla_{(x,v)} f_0)\big(\Xi(0,t,x,v)\big) 
\cdot \nabla_x \Xi(0,t,x,v) \, dv\bigg\|_{L_x^\infty}\\
&\leq2 
\bigg\|\int_{\mathbb{R}^3} \big|\nabla_{(x,v)} f_0\big(\Xi(0,t,x,v)\big)\big| \, dv\bigg\|_{L_x^\infty} 
= 2\big\|\rho_{\Phi(t)_\#|\nabla_{(x,v)} f_0|}\big\|_{L_x^\infty}\\
&\leq 
\frac{4}{\langle t \rangle^3} \, \| \nabla_{(x,v)} f_0 \|_{L_x^1 L_v^\infty \cap L_v^1 L_x^\infty}.
\end{aligned}$$
Combining this with \eqref{eq: characteristic flow bounds imply dispersion estimates, 0 derivative} yields the stated $W^{1,\infty}$ estimate.
Finally, we obtain the corresponding $W^{1,1}$ estimate using mass conservation and the measure preserving property of the characteristics. In particular, proceeding similarly for the $L^1$ derivative estimate as the $L^\infty$ derivative estimate above, we find
$$\|\nabla_x \rho_{\Phi(t)_\# f_0}\|_{L_x^1} \leq 2\big\|\rho_{\Phi(t)_\#|\nabla_{(x,v)} f_0|}\big\|_{L_x^1} \leq 4\| \nabla_{(x,v)} f_0 \|_{L_{x,v}^1},$$
and the proof is complete.
\end{proof}

\begin{lemma}[Interpolation inequality]\label{lemma: interpolation inequality}
Let $0<\alpha<1$ and $m=1,2$. Then, for any nonnegative density $\rho:\mathbb{R}^3\to[0,\infty)$,
\[
\int_{\mathbb{R}^3} \frac{\rho(y)}{|x-y|^{m+\alpha}} \, dy \;\leq C_1
\|\rho\|_{L_x^1(\mathbb{R}^3)}^{\frac{3-m-\alpha}{3}} 
\|\rho\|_{L_x^\infty(\mathbb{R}^3)}^{\frac{m+\alpha}{3}}.
\]
\end{lemma}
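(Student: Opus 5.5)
We prove this with the standard split of the integral into a near region and a far region around $x$, and then optimize the splitting radius. Fix $x\in\mathbb{R}^3$ and a radius $R>0$ to be chosen, and write
\[
\int_{\mathbb{R}^3} \frac{\rho(y)}{|x-y|^{m+\alpha}}\,dy
= \int_{|x-y|<R} \frac{\rho(y)}{|x-y|^{m+\alpha}}\,dy + \int_{|x-y|\ge R} \frac{\rho(y)}{|x-y|^{m+\alpha}}\,dy .
\]

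\emph{Near region.} Bound $\rho$ by $\|\rho\|_{L^\infty}$. Since $m+\alpha<3$ for $m=1,2$ and $0<\alpha<1$, the kernel $|z|^{-(m+\alpha)}$ is locally integrable in $\mathbb{R}^3$. Integrating in polar coordinates gives
\[
\int_{|z|<R}|z|^{-(m+\alpha)}\,dz=\frac{4\pi}{3-m-\alpha}\,R^{3-m-\alpha},
\]
so this piece is at most $\frac{4\pi}{3-m-\alpha}\,R^{3-m-\alpha}\|\rho\|_{L^\infty}$. The constant is finite for $m=1,2$ because $3-m-\alpha\ge 1-\alpha>0$.

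\emph{Far region.} Use $|x-y|^{-(m+\alpha)}\le R^{-(m+\alpha)}$ there. This piece is then at most $R^{-(m+\alpha)}\|\rho\|_{L^1}$.

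\emph{Choice of $R$.} If $\|\rho\|_{L^\infty}=0$ or $\|\rho\|_{L^1}=0$, then $\rho=0$ a.e. and the inequality is trivial. If $\|\rho\|_{L^\infty}=\infty$ or $\|\rho\|_{L^1}=\infty$, the right-hand side is infinite and again there is nothing to prove. Otherwise take
\[
R=\big(\|\rho\|_{L^1}/\|\rho\|_{L^\infty}\big)^{1/3},
\]
which balances the two bounds. Then $R^{3-m-\alpha}\|\rho\|_{L^\infty}$ and $R^{-(m+\alpha)}\|\rho\|_{L^1}$ both equal $\|\rho\|_{L^1}^{\frac{3-m-\alpha}{3}}\|\rho\|_{L^\infty}^{\frac{m+\alpha}{3}}$. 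Adding the two pieces gives the claim with $C_1=\frac{4\pi}{3-m-\alpha}+1$, uniformly in $x$.

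There is no real obstacle here. The only point requiring care is local integrability of the kernel, which is exactly where the restriction $m+\alpha<3$ enters; for $m=2$ it is guaranteed by $\alpha<1$.
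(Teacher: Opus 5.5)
Your proof is correct and matches the paper's argument: the same near/far split at radius $R$, the same $L^\infty$ and $L^1$ bounds on the two pieces, and the same choice $R=(\|\rho\|_{L^1}/\|\rho\|_{L^\infty})^{1/3}$. You additionally make the constant $C_1$ explicit and handle the degenerate cases, which the paper leaves implicit.
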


\begin{proof}
The proof is classical. Fix $x\in\mathbb{R}^3$ and decompose the integral into near-field and far-field contributions with a radius parameter $R>0$ so that
\[
\int_{\mathbb{R}^3} \frac{\rho(y)}{|x-y|^{m+\alpha}} \, dy
= \int_{|x-y|\le R} \frac{\rho(y)}{|x-y|^{m+\alpha}} \, dy
+ \int_{|x-y|\ge R} \frac{\rho(y)}{|x-y|^{m+\alpha}} \, dy.
\]
For the near-field term, using $\rho(y)\le \|\rho\|_{L^\infty}$, we obtain
\[
\int_{|x-y|\le R} \frac{\rho(y)}{|x-y|^{m+\alpha}} \, dy 
\lesssim \|\rho\|_{L^\infty} \int_0^R r^{2-(m+\alpha)} \, dr 
\sim R^{3-m-\alpha} \|\rho\|_{L^\infty}.
\]
For the far-field term, using $\rho \in L^1$, we have
\[
\int_{|x-y|\ge R} \frac{\rho(y)}{|x-y|^{m+\alpha}} \, dy 
\le \frac{1}{R^{m+\alpha}} \int_{\mathbb{R}^3} \rho(y) \, dy
= \frac{\|\rho\|_{L^1}}{R^{m+\alpha}}.
\]
Optimizing in $R$ by choosing \(R = \big( \frac{\|\rho\|_{L^1}}{\|\rho\|_{L^\infty}}\big)^{1/3}\), we balance the two contributions and obtain the claimed interpolation bound.
\end{proof}

 We are now ready to prove the global well-posedness of the Vlasov--Riesz system \eqref{eq: VR}.

\begin{proof}[Proof of Theorem \ref{thm: global well-posedness}]
We fix a small initial data $f_0$ satisfying the smallness condition \eqref{eq: smallness condition for GWP}, and construct sequences $\{f^{(n)}\}_{n=1}^\infty$, $\{\mathbf{E}^{(n)}\}_{n=1}^\infty$ and $\{\Xi^{(n)}(s,t,x,v)\}_{n=0}^\infty$, with $\Xi^{(n)}(s,t,x,v)=(\mathcal{X}^{(n)}(s,t,x,v), \mathcal{V}^{(n)}(s,t,x,v))$ for $0\leq s<t<\infty$ as follows. Let $\Xi^{(0)}(s,t,x,v)=(x-(t-s)v, v)$ be the solution to the ODE \eqref{eq: Hamiltonian ODE0} with $\mathbf{E}\equiv0$. For each integer $n\geq 1$, we iteratively construct
$$f^{(n)}(t,x,v):=f_0\big(\Xi^{(n-1)}(0,t,x,v)\big)\quad\textup{and}\quad\mathbf{E}^{(n)}(t,x):=-\nabla( w*\rho_{f^{(n)}})(t,x).$$
Then, we let $\Xi^{(n)}(s,t,x,v)$ be the solution to the ODE \eqref{eq: Hamiltonian ODE0} with $\mathbf{E}(t,x)=\mathbf{E}^{(n)}(t,x)$.

We claim that for all $n \ge 1$,
\begin{equation}\label{eq: sequence density estimate}
\sup_{t \ge 0}
\Big(
\| \rho_{f^{(n)}(t)} \|_{W_x^{1,1}} +
\langle t \rangle^{3} \| \rho_{f^{(n)}(t)} \|_{W_x^{1,\infty}}
\Big)
\le 4 \eta_*,
\end{equation}
which, by Lemma~\ref{lemma: interpolation inequality}, implies
\begin{equation}\label{eq: force field density estimate}
\sup_{t \ge 0} \Big(
\langle t \rangle^{1+\alpha} \| \mathbf{E}^{(n)}(t) \|_{L_x^\infty} +
\langle t \rangle^{2+\alpha} \| \nabla \mathbf{E}^{(n)}(t) \|_{L_x^\infty}
\Big) \leq C \eta_*.
\end{equation}
Indeed, by Lemma~\ref{lemma: characteristic flow bounds imply dispersion estimates}, \eqref{eq: sequence density estimate} holds for $n=1$. For the induction step, assume that \eqref{eq: sequence density estimate} holds for $n=k \ge 1$. Then, by Lemma~\ref{lemma: interpolation inequality}, $\mathbf{E}^{(k)}$ satisfies the assumption \eqref{eq: E uniform decay bounds} in Lemma~\ref{lemma: dispersion bounds imply characteristic flow estimates} with $C_0 \sim \eta_*$. Subsequently, Lemma~\ref{lemma: dispersion bounds imply characteristic flow estimates} implies that $\Xi^{(k+1)}(s,t,x,v)$ satisfies the assumption in Lemma~\ref{lemma: characteristic flow bounds imply dispersion estimates}. Therefore, by Lemma~\ref{lemma: characteristic flow bounds imply dispersion estimates}, \eqref{eq: sequence density estimate} holds for $n = k+1$.

Next, using the integral representation for  $\Xi^{(n)}(s,t,x,v)=\tilde{\Xi}_s^{(n)}=(\tilde{\mathcal{X}}_s^{(n)},\tilde{\mathcal{V}}_s^{(n)})$, we write
$$\tilde{\Xi}_s^{(n)}=\bigg(x-(t-s)v+\int_s^t (s_1-s)\mathbf{E}^{(n)}\big(s_1,\tilde{\mathcal{X}}_{s_1}^{(n)}\big)ds_1,v-\int_s^t \mathbf{E}^{(n)}\big(s_1,\tilde{\mathcal{X}}_{s_1}^{(n)}\big)ds_1\bigg).$$
Then, we obtain 
$$\begin{aligned}
\mathcal{D}_{n}:&=\sup_{0\leq s\leq t\leq 1}\big\|\Xi^{(n+1)}(s,t,x,v)-\Xi^{(n)}(s,t,x,v)\big\|_{C_{x,v}}\\
&\leq \int_0^1 \langle s_1\rangle\Big(\|(\mathbf{E}^{(n+1)}-\mathbf{E}^{(n)})(s_1)\|_{C_{x}}+\|\nabla\mathbf{E}^{(n)}(s_1)\|_{C_x}\mathcal{D}_n\Big)ds_1.
\end{aligned}$$
Hence, by \eqref{eq: force field density estimate} with small $\eta_*>0$, it follows that
\begin{equation}\label{eq: delta_T^n bound}
\mathcal{D}_{n}\lesssim \int_0^1 \langle s_1\rangle\|(\mathbf{E}^{(n+1)}-\mathbf{E}^{(n)})(s_1)\|_{C_{x}}ds_1.
\end{equation}
Note that by construction we have
$$\begin{aligned}
&(\mathbf{E}^{(n+1)}-\mathbf{E}^{(n)})(s_1,y)\\
&=\iint_{\mathbb{R}^6} \nabla w(y-x) \Big\{f_0\big(\Xi^{(n)}(0,s_1,x,v)\big)-f_0\big(\Xi^{(n-1)}(0,s_1,x,v)\big)\Big\}dxdv\\
&=\int_0^1\iint_{\mathbb{R}^6} \nabla w(y-x) (\nabla_{(x,v)}f_0)\big(\Xi_\theta^{(n)}(0,s_1,x,v)\big)\\
&\qquad\qquad\qquad\cdot\big(\Xi^{(n)}(0,s_1,x,v)-\Xi^{(n-1)}(0,s_1,x,v)\big)dxdvd\theta,
\end{aligned}$$
where
$$\Xi_\theta^{(n)}(s,t,x,v):=\theta\Xi^{(n)}(s,t,x,v)+(1-\theta)\Xi^{(n-1)}(s,t,x,v),$$
and subsequently, by Lemma \ref{lemma: interpolation inequality},
$$\begin{aligned}
&\big|(\mathbf{E}^{(n+1)}-\mathbf{E}^{(n)})(s_1,y)\big|\\
&\leq\mathcal{D}_{n-1}\int_0^1\iint_{\mathbb{R}^6} |\nabla w(y-x)| \big|(\nabla_{(x,v)}f_0)\big(\Xi_\theta^{(n)}(0,s_1,x,v)\big)\big|dxdvd\theta\\
&=\mathcal{D}_{n-1}\int_0^1\int_{\mathbb{R}^3} |\nabla w(y-x)| \rho_{|(\nabla_{(x,v)}f_0)(\Xi_\theta^{(n)}(0,s_1,x,v))|}dxd\theta\\
&\lesssim \mathcal{D}_{n-1}\int_0^1\|\rho_{|(\nabla_{(x,v)}f_0)(\Xi_\theta^{(n)}(0,s_1,x,v))|}\|_{L_x^1}^{\frac{2-\alpha}{3}}\|\rho_{|(\nabla_{(x,v)}f_0)(\Xi_\theta^{(n)}(0,s_1,x,v))|}\|_{L_x^\infty}^{\frac{1+\alpha}{3}}d\theta
\end{aligned}$$
for every $y \in \mathbb{R}^3$ and $0\leq s_1\leq 1$. However, since $\Xi^{(n)}(s,t,x,v)$ obeys \eqref{eq: derivative characteristic bound}  with small $C_0\sim\eta_*$, so does $\Xi_\theta^{(n)}(s,t,x,v)$. Hence, by Lemma \ref{lemma: characteristic flow bounds imply dispersion estimates}, we can show that for $0\leq s_1\leq 1$, 
$$\big\Vert(\mathbf{E}^{(n+1)}-\mathbf{E}^{(n)})(s_1)\big\Vert_{L^\infty_x} \lesssim\frac{\eta_*}{\langle s_1\rangle^{1+\alpha}}\mathcal{D}_{n-1}.$$
Therefore, returning to \eqref{eq: delta_T^n bound}, we obtain
$$\mathcal{D}_{n}\lesssim \int_0^1 \frac{\eta_*}{\langle s_1\rangle^\alpha}\mathcal{D}_{n-1}ds_1\lesssim \eta_*\mathcal{D}_{n-1},$$
where the implicit constants are independent of $n\geq 1$. From the above inequality, we deduce that $\{\Xi^{(n)}(s,t,x,v)\}_{n=1}^\infty$ is contractive in $C_{x,v}$ for $0\leq s\leq t\leq 1$. Then, its limit $\Xi(s,t,x,v)$ satisfies the ODE \eqref{eq: forward characteristic ODE for VP-0} with \eqref{eq: derivative characteristic bound} on $[0,1]$, and upon letting $f(t,x,v)=f_0(\Xi(0,t,x,v))$, this distribution function obeys the dispersion bound \eqref{eq: dispersion estimates-density} on $[0,1]$.

Next, we extend the solution $f(t,x,v)$ globally in time via a standard continuity argument. We define
\[
\mu(t) = \sup_{0 \le s \le t}\Big\{\langle s\rangle^{\alpha+1}\|\mathbf{E}_f(s)\|_{L_x^\infty}
+\langle s\rangle^{\alpha+2}\|\nabla_x \mathbf{E}_f(s)\|_{L_x^\infty}\Big\},
\]
where $\mathbf{E}_f$ denotes the associated force field. Then, passing to the limit in $n$ in~\eqref{eq: force field density estimate}, we have $\mu(t)\le C\eta_*$ for some $t\ge1$. On the other hand, let
\[
T_{\mathrm{max}} = \sup\big\{t\ge0 : \mu(t)\le \eta_0\big\}
\]
for a sufficiently small $\eta_0 \gg \eta_* > 0$. 
In particular, letting $K > 0$ be the constant determined within Lemma \ref{lemma: dispersion bounds imply characteristic flow estimates}, we take $\eta_0 < \frac{1}{10K}$.
Then, under the bootstrap assumption $\mu(t)\le\eta_0$ for all $t \in [0,T_{\mathrm{max}} )$, and we repeat the previous argument using Lemmas~\ref{lemma: dispersion bounds imply characteristic flow estimates}, \ref{lemma: characteristic flow bounds imply dispersion estimates}, and \ref{lemma: interpolation inequality}, now applied to $\mathbf{E}_f$. This yields
$$\| \rho_{f(t)} \|_{W_x^{1,1}} +
\langle t \rangle^{3} \| \rho_{f(t)} \|_{W_x^{1,\infty}}
\le 4 \eta_*$$
and 
\begin{equation}\label{eq: force field/its derivative decay bounds}
\langle t\rangle^{\alpha+1}\|\mathbf{E}_f(t)\|_{L_x^\infty}
+\langle t\rangle^{\alpha+2}\|\nabla \mathbf{E}_f(t)\|_{L_x^\infty}
\le C\eta_*
\end{equation}
for all $t\in[0,T_{\mathrm{max}})$. Therefore, choosing $\eta_*$ smaller, if necessary, we improve the bootstrap assumption to $\mu(t)\le\frac{1}{2}\eta_0$ for all $t \in [0,T_{\mathrm{max}})$. Hence, we conclude $T_{\mathrm{max}}=\infty$, which implies global-in-time existence of the solution with the force field bounds \eqref{eq: force field/its derivative decay bounds} for all $t\geq0$.

Finally, it remains to show the bound for $\nabla^2 \mathbf{E}_f(t)$ in \eqref{eq: dispersion estimates}. Indeed, using a smooth cut-off $\chi \in C_c^\infty$ such that $\chi \equiv 1$ in $|x| \leq 1$ and $\chi$ is supported in $|x| \leq 2$, we have
$$
\nabla_{x_j}\nabla_{x_k} \mathbf{E}_f(t) =  \nabla_{x_j} (\nabla w \chi)* \nabla_{x_k} \rho_{f}(t) + \nabla_{x_j}\nabla_{x_k} \left( \nabla w (1 - \chi) \right) * \rho_{f}(t).
$$
Thus, by \eqref{eq: dispersion estimates-density}, we prove that
$$
\|\nabla^2 \mathbf{E}_f(t)\|_{L_x^\infty} \lesssim \|\rho_{f}(t)\|_{W_x^{1,\infty}} \lesssim \frac{\eta_*}{\langle t \rangle^3},
$$
because $\nabla(\nabla w \chi)$ and $\nabla^2(\nabla w(1 - \chi)) \in L^1(\mathbb{R}^3)$.
\end{proof}

\subsection{Bounds for the forward-in-time characteristic flows}\label{sec: Bounds for the forward-in-time characteristic flows}

Let $f(t)$ be the global solution constructed in Theorem~\ref{thm: global well-posedness}.  
For each $(x,v) \in \mathbb{R}^3 \times \mathbb{R}^3$, we define the forward-in-time characteristic flow \((\mathcal{X}(t,0,x,v), \mathcal{V}(t,0,x,v))\) as the solution to the system \eqref{eq: forward characteristic ODE for VP-0}, where $\mathbf{E}_f$ is the force field associated with the Vlasov--Riesz system \eqref{eq: VR force field}.  
Equivalently, in integral form, it can be written as
\begin{equation}\label{eq: characteristic flow, integral form}
\left\{
\begin{aligned}
\mathcal{X}(t,0,x,v) &= x + t v + \int_0^t (t-t_1) \, \mathbf{E}_f\big(t_1, \mathcal{X}(t_1,0,x,v)\big) \, dt_1,\\
\mathcal{V}(t,0,x,v) &= v + \int_0^t \mathbf{E}_f\big(t_1, \mathcal{X}(t_1,0,x,v)\big) \, dt_1.
\end{aligned}
\right.
\end{equation}

\begin{lemma}[Forward-in-time characteristic flow bounds]\label{lemma: forward-in-time characteristic flow bounds}
Let $0<\alpha<1$. Under the assumptions of Theorem \ref{thm: global well-posedness}, let 
\((\mathcal{X}(t,0,x,v), \mathcal{V}(t,0,x,v))\) denote the forward-in-time characteristic flow defined in \eqref{eq: forward characteristic ODE for VP-0}, associated with the force field $\mathbf{E}_f$ from Theorem~\ref{thm: global well-posedness}, and set 
\[
\mathcal{Y}(t,0,x,v) := \mathcal{X}(t,0,x,v) - t \, \mathcal{V}(t,0,x,v).
\]
Then, the following bounds hold:
\begin{align}
\sup_{t \ge 0}
\Bigg\{\frac{\| \nabla_x \mathcal{Y}(t,0,x,v)-\mathbb{I}_3\|_{C_{x,v}(\mathbb{R}^6)}}{t^{1-\alpha}}+\frac{\| \nabla_v \mathcal{Y}(t,0,x,v)\|_{C_{x,v}(\mathbb{R}^6)}}{t^{1-\alpha}}\Bigg\} &\lesssim \eta_*,\label{eq: forward-in-time characteristic flow bounds-1}\\
\sup_{t \ge 0} \Big(\| \nabla_x \mathcal{V}(t,0,x,v)\|_{C_{x,v}(\mathbb{R}^6)}+\| \nabla_v \mathcal{V}(t,0,x,v) -  \mathbb{I}_3\|_{C_{x,v}(\mathbb{R}^6)}\Big) &\lesssim \eta_*,\label{eq: forward-in-time characteristic flow bounds-2}\\
\sup_{t \ge 0}  
\Bigg\{
\frac{\|\nabla_v^2\mathcal{Y}(t,0,x,v)\|_{C_{x,v}(\mathbb{R}^6)}}{t}
+\frac{\|\nabla_v^2\mathcal{V}(t,0,x,v) \|_{C_{x,v}(\mathbb{R}^6)}}{\ln \langle t\rangle}\Bigg\} &\lesssim \eta_*.\label{eq: forward-in-time characteristic flow bounds-3}
\end{align}
\end{lemma}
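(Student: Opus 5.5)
We prove the three bounds in the order \eqref{eq: forward-in-time characteristic flow bounds-2}, \eqref{eq: forward-in-time characteristic flow bounds-1}, \eqref{eq: forward-in-time characteristic flow bounds-3}. Everything comes from differentiating the integral form \eqref{eq: characteristic flow, integral form} and closing with Gr\"onwall, using the decay \eqref{eq: dispersion estimates}. The first step is a forward analogue of Lemma~\ref{lemma: dispersion bounds imply characteristic flow estimates}. Differentiating \eqref{eq: characteristic flow, integral form} gives
\[
\nabla_{(x,v)}\mathcal{X}(t)=[\mathbb{I}_3\ \ t\mathbb{I}_3]^\top+\int_0^t (t-t_1)\nabla\mathbf{E}_f(t_1,\mathcal{X}(t_1))\nabla_{(x,v)}\mathcal{X}(t_1)\,dt_1 .
\]
The relevant quantity is $\|\nabla_x\mathcal{X}(t)-\mathbb{I}_3\|+\|\nabla_v\mathcal{X}(t)/\langle t\rangle-\mathbb{I}_3\|$, where $\langle t \rangle$ is used to avoid dividing by zero at $t=0$. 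It is controlled by
\[
\int_0^t \frac{\eta_*\,\langle t_1\rangle\,(t-t_1)}{\langle t\rangle\langle t_1\rangle^{2+\alpha}}\,(1+\cdots)\,dt_1 .
\]
Since $\frac{t-t_1}{\langle t\rangle}\le 1$, the kernel is $\lesssim\eta_*\langle t_1\rangle^{-1-\alpha}$, which is integrable. Gr\"onwall then gives
\[
\|\nabla_x\mathcal{X}(t)\|\lesssim 1,\qquad \|\nabla_v\mathcal{X}(t)\|\lesssim\langle t\rangle .
\]
Plugging these into $\nabla\mathcal{V}(t)=[0\ \ \mathbb{I}_3]^\top+\int_0^t\nabla\mathbf{E}_f\nabla\mathcal{X}\,dt_1$, the integrand is at most $\eta_*\langle t_1\rangle^{-1-\alpha}$, which yields \eqref{eq: forward-in-time characteristic flow bounds-2} uniformly in $t$.

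For \eqref{eq: forward-in-time characteristic flow bounds-1}, note that $\partial_t\mathcal{Y}=\mathcal{V}-\mathcal{V}-t\mathbf{E}_f(t,\mathcal{X})=-t\mathbf{E}_f(t,\mathcal{X})$, so
\[
\mathcal{Y}(t)=x-\int_0^t t_1\mathbf{E}_f(t_1,\mathcal{X}(t_1))\,dt_1 .
\]
Hence $\nabla_{(x,v)}\mathcal{Y}-[\mathbb{I}_3\ \ 0]^\top=-\int_0^t t_1\nabla\mathbf{E}_f\nabla_{(x,v)}\mathcal{X}\,dt_1$. Using the bounds from the first step, the integrand is at most $t_1\cdot\eta_*\langle t_1\rangle^{-2-\alpha}\cdot\langle t_1\rangle\lesssim\eta_*\langle t_1\rangle^{-\alpha}$. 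Integrating gives $\lesssim\eta_*\langle t\rangle^{1-\alpha}$. For $t\le1$ this is really $\lesssim\eta_* t$, so dividing by $t^{1-\alpha}$ stays bounded near $t=0$ as well.

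For \eqref{eq: forward-in-time characteristic flow bounds-3}, differentiate twice in $v$:
\[
\nabla_v^2\mathcal{V}(t)=\int_0^t\Big(\nabla^2\mathbf{E}_f(\nabla_v\mathcal{X})^{\otimes2}+\nabla\mathbf{E}_f\,\nabla_v^2\mathcal{X}\Big)dt_1 ,
\]
and similarly $\nabla_v^2\mathcal{X}(t)=\int_0^t(t-t_1)(\cdots)\,dt_1$. The main obstacle is this second-derivative system. The first term uses the $\langle t\rangle^{-3}$ decay of $\nabla^2\mathbf{E}_f$ from \eqref{eq: dispersion estimates}, giving an integrand of size $\eta_*\langle t_1\rangle^{-3}\langle t_1\rangle^2=\eta_*\langle t_1\rangle^{-1}$; this is the source of the $\ln\langle t\rangle$. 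I would first run Gr\"onwall on $\|\nabla_v^2\mathcal{X}(t)\|/\langle t\rangle$. Its equation has a source of size at most
\[
\frac{1}{\langle t\rangle}\int_0^t(t-t_1)\,\eta_*\langle t_1\rangle^{-1}\,dt_1\lesssim\eta_*\ln\langle t\rangle
\]
and a kernel $\eta_*\langle t_1\rangle^{-1-\alpha}$, which is integrable. This gives the preliminary bound $\|\nabla_v^2\mathcal{X}(t)\|\lesssim\eta_*\langle t\rangle\ln\langle t\rangle$. Feeding it back into the $\mathcal{V}$ equation, the second term contributes $\int\eta_*\langle t_1\rangle^{-2-\alpha}\langle t_1\rangle\ln\langle t_1\rangle\,dt_1\lesssim\eta_*$, so $\|\nabla_v^2\mathcal{V}(t)\|\lesssim\eta_*\ln\langle t\rangle$. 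Finally, $\nabla_v^2\mathcal{Y}=-\int_0^t t_1(\cdots)\,dt_1$ has integrand of size $\eta_*\big(\langle t_1\rangle^{-1}t_1+t_1\langle t_1\rangle^{-2-\alpha}\langle t_1\rangle\ln\langle t_1\rangle\big)\lesssim\eta_*$, which yields $\|\nabla_v^2\mathcal{Y}(t)\|\lesssim\eta_* t$. Small-$t$ cases are trivial because all integrands are bounded.
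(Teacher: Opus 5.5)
Your overall plan works, but the first step contains a false claim: Gr\"onwall does not give $\|\nabla_x\mathcal{X}(t)\|\lesssim 1$ for the forward flow. Since $\nabla_x\mathcal{X}(t)=\nabla_x\mathcal{Y}(t)+t\,\nabla_x\mathcal{V}(t)$, and $\nabla_x\mathcal{V}(t)$ tends to $\nabla_x\mathcal{V}^+$, which is generically a nonzero matrix of size $\eta_*$, the gradient $\nabla_x\mathcal{X}(t)$ grows like $\eta_* t$. A concrete case is the model field $\mathbf{E}=\epsilon\langle t\rangle^{-2-\alpha}\sin(x_1)e_1$, which satisfies the decay hypotheses. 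At $x=v=0$ its linearization is $J_{11}''=\epsilon\langle t\rangle^{-2-\alpha}J_{11}$ with $J_{11}(0)=1$, $J_{11}'(0)=0$, and $J_{11}$ grows linearly.

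The kernel you wrote, $\frac{(t-t_1)\langle t_1\rangle}{\langle t\rangle\langle t_1\rangle^{2+\alpha}}$, is correct only for a quantity normalized by $\langle t\rangle$. For the unnormalized $\|\nabla_x\mathcal{X}(t)-\mathbb{I}_3\|$ the kernel is $(t-t_1)\langle t_1\rangle^{-2-\alpha}$, whose integral over $[0,t]$ is of order $t$. This is exactly the asymmetry with the backward Lemma~\ref{lemma: dispersion bounds imply characteristic flow estimates}, where the factor is $(s_1-s)$ with $s_1\ge s$. Also, $\|\nabla_v\mathcal{X}(t)/\langle t\rangle-\mathbb{I}_3\|$ is not small; for instance, it equals $1$ at $t=0$.

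The repair is immediate. Run your Gr\"onwall argument on $\langle t\rangle^{-1}\|\nabla_{(x,v)}\mathcal{X}(t)\|$, whose source is $(1+t)/\langle t\rangle\le 2$ and whose kernel is $\le\eta_*\langle t_1\rangle^{-1-\alpha}$. This gives $\|\nabla_{(x,v)}\mathcal{X}(t)\|\lesssim\langle t\rangle$. Every later step you take only uses $\langle t_1\rangle$ for the full gradient, so your bounds for $\mathcal{V}$, $\mathcal{Y}$, and the second derivatives go through unchanged.

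With that fix, your argument is a legitimate alternative to the paper's. The paper never estimates $\nabla\mathcal{X}$ on its own. It bootstraps $\mathbf{S}(t)=t^{-(1-\alpha)}\|\nabla_{(x,v)}\mathcal{Y}_t-[\mathbb{I}_3\ 0]^\top\|+\|\nabla_{(x,v)}\mathcal{V}_t-[0\ \mathbb{I}_3]^\top\|$ after substituting $\|\nabla_{(x,v)}\mathcal{X}_{t_1}\|\le 1+t_1+(t_1^{1-\alpha}+t_1)\mathbf{S}(t_1)$. This leads to a Gr\"onwall inequality with the $t$-dependent kernel $t^{-(1-\alpha)}\langle t_1\rangle^{-\alpha}$. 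At second order it likewise runs a joint Gr\"onwall on $\|\nabla_v^2\mathcal{Y}_t\|/t+\|\nabla_v^2\mathcal{V}_t\|/\ln\langle t\rangle$, using $\nabla_v^2\mathcal{X}=\nabla_v^2\mathcal{Y}+t\nabla_v^2\mathcal{V}$.

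You instead close a single Gr\"onwall on the $\mathcal{X}$-equation, at first and second order, with the uniformly integrable kernel $\eta_*\langle t_1\rangle^{-1-\alpha}$. You then read off the $\mathcal{V}$- and $\mathcal{Y}$-bounds by direct integration, with no further bootstrap. This decoupling is slightly cleaner and makes the origin of each rate transparent: $t^{1-\alpha}$ comes from $\int_0^t t_1\langle t_1\rangle^{-1-\alpha}\,dt_1$, and $\ln\langle t\rangle$ comes from $\nabla^2\mathbf{E}_f\sim\langle t\rangle^{-3}$. The paper's version instead builds the target weights directly into the bootstrap quantity.

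A minor point: since $\ln\langle t\rangle\sim t^2/2$ as $t\to 0$, bounded integrands alone do not control $\|\nabla_v^2\mathcal{V}(t)\|/\ln\langle t\rangle$ for small $t$. There you need $\|\nabla_v\mathcal{X}(t_1)\|\lesssim t_1$, which yields $\|\nabla_v^2\mathcal{V}(t)\|\lesssim\eta_* t^3$. The paper's write-up is equally loose on this point.
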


\begin{proof}
Fix $(x,v) \in \mathbb{R}^3 \times \mathbb{R}^3$ and denote
\[
(\mathcal{X}_t, \mathcal{V}_t) := \big(\mathcal{X}(t,0,x,v), \mathcal{V}(t,0,x,v)\big)\quad\textup{and} 
\quad \mathcal{Y}_t := \mathcal{X}_t - t \mathcal{V}_t.
\]
Note from \eqref{eq: characteristic flow, integral form} that 
$$(\mathcal{Y}_t,\mathcal{V}_t) = \bigg(x - \int_0^t t_1 \, \mathbf{E}_f(t_1, \mathcal{X}_{t_1}) \, dt_1, v+\int_0^t \mathbf{E}_f(t_1, \mathcal{X}_{t_1}) \, dt_1\bigg).$$
Differentiating the above integral equations, we write
$$\left\{\begin{aligned}
\nabla_{(x,v)} \mathcal{Y}_t&=[\mathbb{I}_3 \ \ 0]^\top  - \int_0^t t_1 \, \nabla \mathbf{E}_f(t_1, \mathcal{X}_{t_1}) \, \nabla_{(x,v)} \mathcal{X}_{t_1} \, dt_1,\\
\nabla_{(x,v)} \mathcal{V}_t &= [0 \ \ \mathbb{I}_3]^\top+ \displaystyle\int_0^t \nabla \mathbf{E}_f(t_1, \mathcal{X}_{t_1}) \, \nabla_{(x,v)} \mathcal{X}_{t_1} \, dt_1.
\end{aligned}\right.$$
Then, by the dispersion estimates \eqref{eq: dispersion estimates}, we obtain
\[
\begin{aligned}
\mathbf{S}(t):&=\frac{1}{t^{1-\alpha}}\big\|\nabla_{(x,v)} \mathcal{Y}_t - [\mathbb{I}_3 \ \ 0]^\top\big\|+ \big\|\nabla_{(x,v)} \mathcal{V}_t - [0\ \ \mathbb{I}_3]^\top\big\|\\
&\lesssim\int_0^t \bigg(\frac{\eta_*}{t^{1-\alpha}\langle t_1 \rangle^{1+\alpha}}+\frac{\eta_*}{\langle t_1 \rangle^{2+\alpha}}\bigg) \|\nabla_{(x,v)} \mathcal{X}_{t_1}\| \, dt_1.
\end{aligned}
\]
Note that
$$\begin{aligned}
\|\nabla_{(x,v)} \mathcal{X}_{t_1}\|&\leq \|\nabla_{(x,v)} \mathcal{Y}_{t_1}\|+t_1\|\nabla_{(x,v)} \mathcal{V}_{t_1}\|\\
&\leq 1+t_1+\big\|\nabla_{(x,v)} \mathcal{Y}_{t_1}- [\mathbb{I}_3 \ \ 0]^\top\big\|+t_1\big\|\nabla_{(x,v)} \mathcal{V}_{t_1}- [0 \ \ \mathbb{I}_3]^\top\big\|\\
&\leq 1+t_1+t_1^{1-\alpha}\mathbf{S}(t_1)+t_1\mathbf{S}(t_1).
\end{aligned}$$
Thus, it follows that 
\[
\mathbf{S}(t)\lesssim \eta_* \left [ 1+ \int_0^t \bigg(\frac{1}{t^{1-\alpha} \langle t_1 \rangle^\alpha}+\frac{1}{\langle t_1 \rangle^{1+\alpha}}\bigg)\mathbf{S}(t_1) \, dt_1 \right ].
\]
With this, Grönwall's inequality yields $\mathbf{S}(t)\lesssim \eta_*$, and thus \eqref{eq: forward-in-time characteristic flow bounds-1} and \eqref{eq: forward-in-time characteristic flow bounds-2}.

Next, differentiating the equation for $(\mathcal{Y}_t,\mathcal{V}_t)$ twice with respect to $v$, we write 
\[\left\{
\begin{aligned}
\nabla_{v_j} \nabla_{v_k} \mathcal{Y}_t &= -\int_0^t t_1 \, \nabla_{v_j} \mathcal{X}_{t_1} \cdot (\nabla^2 \mathbf{E}_f)(t_1, \mathcal{X}_{t_1}) \, \nabla_{v_k} \mathcal{X}_{t_1}  \\
&\qquad\qquad+t_1 (\nabla \mathbf{E}_f)(t_1, \mathcal{X}_{t_1}) \, \nabla_{v_j} \nabla_{v_k} (\mathcal{Y}_{t_1} + t_1 \mathcal{V}_{t_1}) \, dt_1,\\
\nabla_{v_j} \nabla_{v_k} \mathcal{V}_t &= \int_0^t \nabla_{v_j}\mathcal{X}_{t_1} \cdot (\nabla^2 \mathbf{E}_f)(t_1, \mathcal{X}_{t_1}) \, \nabla_{v_k} \mathcal{X}_{t_1}\\
&\qquad\quad +  (\nabla \mathbf{E}_f)(t_1, \mathcal{X}_{t_1}) \, \nabla_{v_j} \nabla_{v_k} (\mathcal{Y}_{t_1} + t_1 \mathcal{V}_{t_1}) \, dt_1.
\end{aligned}\right.
\]
Note that by \eqref{eq: forward-in-time characteristic flow bounds-1} and \eqref{eq: forward-in-time characteristic flow bounds-2}, we have $\|\nabla_{v_j} \mathcal{X}_{t_1}\|\leq \|\nabla_{v_j} \mathcal{Y}_{t_1}\|+t_1\|\nabla_{v_j} \mathcal{V}_{t_1}\|\lesssim\langle t_1\rangle$. Hence, by the dispersion estimate \eqref{eq: dispersion estimates}, we obtain
\[\begin{aligned}
\frac{\|\nabla_v^2 \mathcal{Y}_t\|}{t} + \frac{\|\nabla_v^2 \mathcal{V}_t\|}{\ln\langle t\rangle} &\lesssim \frac{1}{t}\int_0^t\eta_*+t_1\frac{\eta_*}{\langle t_1\rangle^{2+\alpha}} \Big(\|\nabla_v^2 \mathcal{Y}_{t_1}\| +|t_1| \|\nabla_v^2 \mathcal{V}_{t_1}\| \Big)\, dt_1\\
&\quad+ \frac{1}{\ln\langle t\rangle}\int_0^t\frac{\eta_*}{\langle t_1\rangle}+\frac{\eta_*}{\langle t_1\rangle^{2+\alpha}} \Big(\|\nabla_v^2 \mathcal{Y}_{t_1}\| +|t_1| \|\nabla_v^2 \mathcal{V}_{t_1}\| \Big)\, dt_1\\
&\lesssim \eta_* + \frac{1}{t}\int_0^t \frac{\eta_*}{\langle t_1 \rangle^{\alpha}}\ln\langle t_1\rangle \bigg( \frac{\|\nabla_v^2 \mathcal{Y}_{t_1}\|}{t_1} + \frac{\|\nabla_v^2 \mathcal{V}_{t_1}\|}{\ln\langle t_1\rangle} \bigg)\, dt_1\\
&\quad +\frac{1}{\ln\langle t\rangle}\int_0^t\frac{\eta_*}{\langle t_1\rangle^{1+\alpha}}\ln \langle t_1\rangle \bigg( \frac{\|\nabla_v^2 \mathcal{Y}_{t_1}\|}{t_1} + \frac{\|\nabla_v^2 \mathcal{V}_{t_1}\|}{\ln\langle t_1\rangle} \bigg)\, dt_1.
\end{aligned}\]
Therefore, applying Grönwall's inequality, we arrive at \eqref{eq: forward-in-time characteristic flow bounds-3}.
\end{proof}

\section{Construction of the modified wave operator for characteristic flows}\label{sec: Construction of the modified wave operator}

For the global solution $f(t)$to the Vlasov--Riesz system \eqref{eq: VR}, constructed in Theorem \ref{thm: global well-posedness}, we consider the associated Hamiltonian flow $\Phi(t) = (\Phi_1(t), \Phi_2(t))$ defined by the characteristic ODE
\begin{equation}\label{eq:forward_characteristic_ODE_for_VP}
\left\{
\begin{aligned}
\partial_t \Phi(t)(x,v) &= \big(\Phi_2(t)(x,v), \mathbf{E}_f(t, \Phi_1(t)(x,v))\big),\\
\Phi(0)(x,v) &= (x,v),
\end{aligned}
\right.
\end{equation}
where $\mathbf{E}_f=\mathbf{E}_f(t,x)$ denotes the force field. 
In this section, we introduce a reference flow $\tilde{\Phi}^{\mathrm{ref}}(t)$ capturing the leading-order behavior of $\Phi(t)$ and construct the corresponding finite- and infinite-time modified wave operators.

\subsection{Construction of the modified reference flow}\label{sec: construction of the modified reference flow}

Recall the notation
\[
\Phi(t)(x,v) = \big(\mathcal{X}(t,0,x,v), \mathcal{V}(t,0,x,v)\big),
\]
for $(x,v) \in \mathbb{R}^3 \times \mathbb{R}^3$, introduced in Section~\ref{sec: Bounds for the forward-in-time characteristic flows}.
The dispersion estimate for the force field (see \eqref{eq: dispersion estimates}) implies that the momentum $\mathcal{V}(t,0,x,v)$ converges as $t \to \infty$.
We define the limiting momentum by
\begin{equation}\label{eq: momentum limit}
\mathcal{V}^+(x,v) = \mathcal{V}(\infty,0,x,v):= v + \int_0^\infty \mathbf{E}_f\big(t, \mathcal{X}(t,0,x,v)\big) \, dt,
\end{equation}
and establish the following quantitative convergence estimates.

\begin{lemma}[Momentum limit]\label{lemma: properties of the momentum limit}
Under the assumptions of Theorem \ref{theorem: modified scattering}, the momentum characteristics $\mathcal{V}(t,0,x,v)$ satisfy the following bounds:
\begin{align}
\sup_{t\in[0,\infty)}\left (\langle t\rangle^\alpha\|\mathcal{V}(t,0,x,v)-\mathcal{V}^+(x,v)\|_{L_{x,v}^\infty(\mathbb{R}^6)} \right )&\lesssim \eta_*,\label{eq: rate of convergence for momentum limit}\\
\sup_{t\in[0,\infty)\cup\{\infty\}}\|\mathcal{V}(t,0,x,v)-v\|_{L_{x,v}^\infty(\mathbb{R}^6)}&\lesssim \eta_*,\label{eq: almost identity of V(t,0,x,v)}\\
\sup_{t\in[0,\infty)\cup\{\infty\}}\big\|\nabla_{(x,v)} \mathcal{V}(t,0,x,v)-[0\ \mathbb{I}_3]^\top\big\|_{L_{x,v}^\infty(\mathbb{R}^6)}&\lesssim \eta_*.\label{eq: v derivative of V limit}
\end{align}
\end{lemma}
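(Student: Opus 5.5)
The plan is to read all three bounds off the integral representation \eqref{eq: characteristic flow, integral form} and the definition \eqref{eq: momentum limit}, using the dispersion estimate \eqref{eq: dispersion estimates} from Theorem~\ref{thm: global well-posedness} together with the derivative bounds of Lemma~\ref{lemma: forward-in-time characteristic flow bounds}.

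\emph{First bound, \eqref{eq: rate of convergence for momentum limit}.} Since $\|\mathbf{E}_f(t)\|_{L^\infty_x}\lesssim \eta_*\langle t\rangle^{-1-\alpha}$ is integrable on $[0,\infty)$, the integral in \eqref{eq: momentum limit} converges absolutely and uniformly in $(x,v)$, so $\mathcal{V}^+$ is well defined. Subtracting the two representations gives
\[
\mathcal{V}(t,0,x,v)-\mathcal{V}^+(x,v)=-\int_t^\infty \mathbf{E}_f\big(t_1,\mathcal{X}(t_1,0,x,v)\big)\,dt_1 .
\]
Its modulus is bounded by $\int_t^\infty \eta_*\langle t_1\rangle^{-1-\alpha}\,dt_1\lesssim \eta_*\langle t\rangle^{-\alpha}$, which is exactly \eqref{eq: rate of convergence for momentum limit}.

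\emph{Second bound, \eqref{eq: almost identity of V(t,0,x,v)}.} Similarly,
\[
|\mathcal{V}(t,0,x,v)-v|\le \int_0^\infty \|\mathbf{E}_f(t_1)\|_{L^\infty_x}\,dt_1\lesssim \eta_* ,
\]
uniformly in $t\in[0,\infty]$. This uses $\alpha>0$.

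\emph{Third bound, \eqref{eq: v derivative of V limit}.} For finite $t$ this is precisely \eqref{eq: forward-in-time characteristic flow bounds-2}. The only real issue is the case $t=\infty$, that is, differentiating $\mathcal{V}^+$. I would differentiate under the integral sign:
\[
\nabla_{(x,v)}\mathcal{V}^+=[0\ \mathbb{I}_3]^\top+\int_0^\infty \nabla\mathbf{E}_f\big(t_1,\mathcal{X}_{t_1}\big)\,\nabla_{(x,v)}\mathcal{X}_{t_1}\,dt_1 .
\]
This is justified by dominated convergence once the integrand is bounded by an integrable function uniformly in $(x,v)$. By Lemma~\ref{lemma: forward-in-time characteristic flow bounds},
\[
\|\nabla_{(x,v)}\mathcal{X}_{t_1}\|\le \|\nabla\mathcal{Y}_{t_1}\|+t_1\|\nabla\mathcal{V}_{t_1}\|\lesssim \langle t_1\rangle .
\]
Combined with $\|\nabla\mathbf{E}_f(t_1)\|_{L^\infty_x}\lesssim \eta_*\langle t_1\rangle^{-2-\alpha}$, the integrand is $O(\eta_*\langle t_1\rangle^{-1-\alpha})$, which is integrable. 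The same domination shows that $\nabla_{(x,v)}\mathcal{V}(t,0,\cdot)$ converges uniformly to this expression, so $\mathcal{V}^+$ is $C^1$. The integral is bounded by $\eta_*$, giving \eqref{eq: v derivative of V limit} at $t=\infty$.

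The only step needing care is this interchange of limit and derivative for $\mathcal{V}^+$. It rests on the linear growth bound for $\nabla\mathcal{X}$ from Lemma~\ref{lemma: forward-in-time characteristic flow bounds}, so no new estimate is required.
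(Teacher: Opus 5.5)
Your proposal is correct and follows essentially the same route as the paper: all three bounds come from the integral representation of $\mathcal{V}$ (and its tail $\int_t^\infty$) combined with the dispersion estimate, and the derivative bound at $t=\infty$ uses $\|\nabla\mathbf{E}_f(t)\|_{L^\infty_x}\lesssim\eta_*\langle t\rangle^{-2-\alpha}$ together with $\|\nabla_{(x,v)}\mathcal{X}(t,0,x,v)\|\lesssim\langle t\rangle$. Your explicit dominated-convergence justification for differentiating $\mathcal{V}^+$ under the integral sign is a detail the paper leaves implicit.
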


\begin{proof}
From the representation
\[
\mathcal{V}(t,0,x,v) = \mathcal{V}^+(x,v) - \int_t^\infty \mathbf{E}_f\big(t_1, \mathcal{X}(t_1,0,x,v)\big) \, dt_1
\]
of \eqref{eq: characteristic flow, integral form}, the dispersion estimate \eqref{eq: dispersion estimates} yields \eqref{eq: rate of convergence for momentum limit}. 
Similarly, using the integral form
\begin{equation}\label{eq: V(t,0,x,v) integral form}
\mathcal{V}(t,0,x,v) = v + \int_0^t \mathbf{E}_f\big(t_1, \mathcal{X}(t_1,0,x,v)\big) \, dt_1
\end{equation}
(valid also for $t=\infty$) together with \eqref{eq: dispersion estimates}, we obtain \eqref{eq: almost identity of V(t,0,x,v)}. 
Moreover, differentiating \eqref{eq: V(t,0,x,v) integral form} and applying \eqref{eq: dispersion estimates} along with the bound $\|\nabla_{(x,v)} \mathcal{X}(t,0,x,v)\| \lesssim \langle t \rangle$ from the proof of Lemma~\ref{lemma: forward-in-time characteristic flow bounds}, we obtain
\[
\begin{aligned}
\big\|\nabla_{(x,v)} \mathcal{V}(t,0,x,v)-[0\ \mathbb{I}_3]^\top\big\|
&\lesssim \int_0^\infty \|\nabla \mathbf{E}_f(t)\|_{L_x^\infty}
\, \|\nabla_{(x,v)} \mathcal{X}(t,0,x,v)\| \, dt\\
&\lesssim \int_0^\infty \frac{\eta_*}{\langle t \rangle^{2+\alpha}} \langle t \rangle \, dt
\lesssim \eta_*,
\end{aligned}
\]
which proves \eqref{eq: v derivative of V limit}.
\end{proof}

For the position $\mathcal{X}(t,0,x,v)$, subtracting the free flow yields the integral representation\footnote{This formula is obtained by integrating \eqref{eq:forward_characteristic_ODE_for_VP} from $t=1$. The lower limit $t=0$ is excluded to avoid a technical difficulty caused by the singularity at $t=0$.}
\[
\mathcal{X}(t,0,x,v)
= \mathcal{Y}(1,0,x,v)
+ t \, \mathcal{V}(t,0,x,v)
\underbrace{- \int_1^t t_1 \, \mathbf{E}_f\big(t_1, \mathcal{X}(t_1,0,x,v)\big)\, dt_1}_{(*)}
\]
for $t \geq 1$, where $\mathcal{Y}(t,0,x,v)=\mathcal{X}(t,0,x,v)-t\mathcal{V}(t,0,x,v)$. A key observation is that, due to the long-range interaction, the dispersion estimate \eqref{eq: dispersion estimates} does not ensure the convergence of $(*)$ as $t \to \infty$, unlike the momentum convergence \eqref{eq: momentum limit}. Hence, we must extract the contribution of the next order terms. Expanding the integral gives
\[
\begin{aligned}
(*)&= \int_1^t t_1 \iint_{\mathbb{R}^3 \times \mathbb{R}^3}
\nabla w\big(\mathcal{X}(t_1,0,x,v) - \tilde{x}\big)
f_0\Big(\mathcal{X}(0,t_1,\tilde{x},\tilde{v}), \mathcal{V}(0,t_1,\tilde{x},\tilde{v})\Big)
\, d\tilde{x} d\tilde{v}\, dt_1 \\
&= \int_1^t t_1 \iint_{\mathbb{R}^3 \times \mathbb{R}^3}
\nabla w\big(\mathcal{X}(t_1,0,x,v) - \mathcal{X}(t_1,0,\tilde{x},\tilde{v})\big)
f_0(\tilde{x},\tilde{v}) \, d\tilde{x} d\tilde{v}\, dt_1,
\end{aligned}
\]
where in the last equality, we have used the change of variables
\[
\big(\mathcal{X}(0,t_1,\tilde{x},\tilde{v}), \mathcal{V}(0,t_1,\tilde{x},\tilde{v})\big)
\mapsto (\tilde{x}, \tilde{v}).
\]
Note that by the dispersion estimate \eqref{eq: dispersion estimates} and Lemma~\ref{lemma: properties of the momentum limit},
\begin{equation}\label{eq: asymptotic with correction}
\begin{aligned}\big(\mathcal{X}(t_1,0,x,v) - \mathcal{X}(t_1,0,\tilde{x},\tilde{v})\big)&\approx (x - \tilde{x})
+t_1\big( \mathcal{V}(t_1,0,x,v) - \mathcal{V}(t_1,0,\tilde{x},\tilde{v})\big)
\\
&\approx t_1\big( \mathcal{V}(t,0,x,v) - \mathcal{V}(t,0,\tilde{x},\tilde{v})\big)
\end{aligned}
\end{equation}
for $t \ge t_1 \gg 1$. Replacing $\mathcal{X}(t_1,0,x,v) - \mathcal{X}(t_1,0,\tilde{x},\tilde{v})$ by
$t_1\big(\mathcal{V}(t,0,x,v) - \mathcal{V}(t,0,\tilde{x},\tilde{v})\big)$
for $t_1$ sufficiently large
and using the homogeneity of \(\nabla w(x) = -\alpha \frac{x}{|x|^{2+\alpha}}\), we extract the leading-order term
\[
\begin{aligned}
&\int_1^t t_1\iint_{\mathbb{R}^3 \times \mathbb{R}^3}
\nabla w\Big(t_1 \big(\mathcal{V}(t,0,x,v) - \mathcal{V}(t,0,\tilde{x},\tilde{v})\big)\Big)
f_0(\tilde{x},\tilde{v}) \, d\tilde{x} d\tilde{v}\, dt_1 \\
&= -\int_1^t \frac{1}{t_1^\alpha} \, \mathbf{A}_t\big(\mathcal{V}(t,0,x,v)\big)\, dt_1
= - \frac{t^{1-\alpha}-1}{1-\alpha} \,
\mathbf{A}_t\big(\mathcal{V}(t,0,x,v)\big),
\end{aligned}
\]
where
\begin{equation}\label{eq: A_t(v) definition}
\mathbf{A}_t(v)
:= - \iint_{\mathbb{R}^6}
\nabla w\big(v - \mathcal{V}(t,0,\tilde{x},\tilde{v})\big)
\, f_0(\tilde{x},\tilde{v}) \, d\tilde{x} d\tilde{v}.
\end{equation}
In particular, we have shown
\begin{equation}
\label{Econv}
\mathbf{E}_f\big(t_1, \mathcal{X}(t_1,0,x,v)\big) \approx \frac{1}{t_1^{1+\alpha}} \mathbf{A}_t(\mathcal{V}(t,0,x,v))
\end{equation}
for $t \geq t_1 \gg 1$.
Consequently, we expect the asymptotic expansion
\[
\begin{aligned}
\mathcal{X}(t,0,x,v)
&= \mathcal{Y}(1,0,x,v)
+ t \, \mathcal{V}(t,0,x,v)
- \frac{t^{1-\alpha}-1}{1-\alpha} \, \mathbf{A}_t\big(\mathcal{V}(t,0,x,v)\big) \\
&\quad + \big(\text{a remainder converging as } t \to \infty \big).
\end{aligned}
\]
By Lemma \ref{lemma: properties of the momentum limit}, we may also define the limit case $t=\infty$ in \eqref{eq: A_t(v) definition} as
\begin{equation}\label{eq: A(v) definition}
\mathbf{A}_\infty(v)
:= - \iint_{\mathbb{R}^6}
\nabla w\big(v - \mathcal{V}^+(\tilde{x},\tilde{v})\big)
\, f_0(\tilde{x},\tilde{v}) \, d\tilde{x} d\tilde{v}.
\end{equation}

The following lemma shows that $\mathbf{A}_t(v)$ (resp., $\mathbf{A}_\infty(v)$) is a well-defined $C^2$ (resp., $C^1$) function.

\begin{lemma}[Bounds for $\mathbf{A}_t$]\label{lemma: derivative bound for A(v)}
Under the assumptions in Theorem \ref{theorem: modified scattering}, we have
\begin{align}
\sup_{t\in[0,\infty)\cup\{\infty\}}\| \mathbf{A}_t(v)\|_{C^1(\mathbb{R}^3)}&\lesssim \eta_*,\label{eq: derivative bound for A(v)}\\
\sup_{t\in[0,\infty)} \left (\frac{1}{\ln\langle t\rangle}\|\nabla^2\mathbf{A}_t(v)\|_{C(\mathbb{R}^3)} \right) &\lesssim \eta_*.\label{eq: derivative bound for A_t(v)}
\end{align}
Moreover, we have
\begin{equation}\label{eq: estimate for t derivative of A_t(v)}
\sup_{t\in[0,\infty)} \left [\langle t\rangle^{1+\alpha}\bigg(\|\partial_t\mathbf{A}_t(v)\|_{C(\mathbb{R}^3)}+\frac{1}{\ln\langle t\rangle}\|\partial_t\nabla\mathbf{A}_t(v)\|_{C(\mathbb{R}^3)}\bigg) \right ]\lesssim \eta_*,
\end{equation}
and thus,
\begin{equation}\label{eq: convergence estimate for A_t(v)}
\sup_{t\geq0} \biggl (\langle t\rangle^\alpha\|\mathbf{A}_t(v)-\mathbf{A}_\infty(v)\|_{C(\mathbb{R}^3)} \biggr )\lesssim  \eta_*.
\end{equation}
\end{lemma}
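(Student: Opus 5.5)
The plan is to treat $\mathbf{A}_t(v)$ as a Riesz-type potential of the pushforward of $f_0$ under the velocity map $\mathcal{V}(t,0,\cdot,\cdot)$. Writing $\nabla w(z)=-\alpha z|z|^{-2-\alpha}$, which is locally integrable and homogeneous of degree $-(1+\alpha)$, we have
\[
\mathbf{A}_t(v)=-\int_{\mathbb{R}^3}\nabla w(v-u)\,\sigma_t(u)\,du,\qquad \sigma_t:=\mathcal{V}(t,0,\cdot,\cdot)_\#f_0 .
\]
The first step is to show that $\sigma_t$ is a bounded density with $W^{1,1}\cap W^{1,\infty}$ bounds of size $\eta_*$, uniformly in $t\in[0,\infty]$. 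By \eqref{eq: v derivative of V limit}, for fixed $\tilde{x}$ the map $\tilde v\mapsto\mathcal{V}(t,0,\tilde x,\tilde v)$ is a $C^1$ diffeomorphism with Jacobian $\mathbb{I}_3+O(\eta_*)$. Changing variables in $\tilde v$ therefore gives
\[
\sigma_t(u)=\int f_0(\tilde x,\tilde v(\tilde x,u))\,|\det\nabla_{\tilde v}\mathcal V|^{-1}d\tilde x ,
\]
so that $\|\sigma_t\|_{L^1\cap L^\infty}\lesssim\|f_0\|_{L^1_{x,v}\cap L^1_xL^\infty_v}\lesssim\eta_*$. With this in hand, Lemma \ref{lemma: interpolation inequality} with $m=1$ yields $\|\mathbf{A}_t\|_{C}\lesssim\eta_*$.

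For the gradient bound I would not differentiate the kernel, since $\nabla^2w$ is not integrable. Instead I will put the derivative on the measure: $\nabla_v\mathbf{A}_t=-\int\nabla w(v-\mathcal V(t,0,\tilde x,\tilde v))\,(\cdots)$ after rewriting $\nabla_v$ as $\nabla_u$ acting on $\sigma_t$, so that $\nabla\mathbf{A}_t=-\nabla w*\nabla\sigma_t$. Differentiating the change-of-variables formula for $\sigma_t$ requires $\nabla_{\tilde v}^2\mathcal V$, and \eqref{eq: forward-in-time characteristic flow bounds-3} bounds this only by $\eta_*\ln\langle t\rangle$. To get the uniform $C^1$ bound \eqref{eq: derivative bound for A(v)}, I would therefore use the kernel cutoff $\chi$ from the proof of Theorem \ref{thm: global well-posedness} instead. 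On the far part, $(1-\chi)\nabla w$ has bounded derivatives, so $\sigma_t\in L^1$ suffices. On the near part, write $\nabla_v[(\chi\nabla w)(v-\mathcal V)]$ and integrate by parts in $\tilde v$ using $(\nabla_{\tilde v}\mathcal V)^{-1}$. This costs one derivative of $\nabla_{\tilde v}\mathcal V$, i.e.\ a $\nabla_v^2\mathcal V$ term. Hence the first thing to try is to check whether $\nabla^2_v\mathcal V$ is in fact uniformly bounded. Heuristically it is: $\int\nabla^2\mathbf E\,|\nabla_v\mathcal X|^2\lesssim\int\langle t\rangle^{-3}\langle t\rangle^2$ is logarithmic, so it is not. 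The fallback for the $\ln\langle t\rangle$ bound is to accept it in \eqref{eq: derivative bound for A_t(v)}. For the uniform $C^1$ bound, I would only transfer the derivative via the Jacobian, not its derivative: $\nabla\mathbf A_t=-\iint\nabla w(v-\mathcal V)\,\nabla_{\tilde v}\!\cdot\![(\nabla_{\tilde v}\mathcal V)^{-\top}f_0]$, where the divergence falling on $(\nabla_{\tilde v}\mathcal V)^{-1}$ is the delicate piece. I expect this to be the main obstacle. If needed, I would use $t=\infty$ and the flow structure, noting that $\mathcal{V}$ at finite $t$ equals $\mathcal V^+$ up to $O(\langle t\rangle^{-\alpha})$ in $C^1$. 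The second derivative bound then follows the same way, with $\nabla^2 w*\nabla\sigma_t$ handled by the interpolation lemma with $m=2$ applied to $|\nabla\sigma_t|$, and picking up the $\ln\langle t\rangle$ from $\nabla^2_v\mathcal V$.

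For the time derivative, $\partial_t\mathcal V(t,0,\tilde x,\tilde v)=\mathbf E_f(t,\mathcal X(t,0,\tilde x,\tilde v))$ gives
\[
\partial_t\mathbf A_t(v)=\iint\nabla^2w(v-\mathcal V)\,\mathbf E_f(t,\mathcal X)\,f_0 .
\]
Moving one derivative onto the measure exactly as above bounds this by $\|\mathbf E_f(t)\|_\infty\cdot(\text{the }C^1\text{ quantity})$ plus a term containing $\nabla_{\tilde v}[\mathbf E_f(t,\mathcal X)]=\nabla\mathbf E_f\,\nabla_{\tilde v}\mathcal X=O(\langle t\rangle^{-2-\alpha}\langle t\rangle)$. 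Both are $O(\eta_*\langle t\rangle^{-1-\alpha})$ by \eqref{eq: dispersion estimates}. The $\partial_t\nabla\mathbf A_t$ bound is analogous, with one more derivative producing the $\ln\langle t\rangle$ factor. Finally, integrating $\partial_t\mathbf A_t$ from $t$ to $\infty$ gives \eqref{eq: convergence estimate for A_t(v)}.
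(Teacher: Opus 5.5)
There is a genuine gap. Your treatment of $\nabla^2\mathbf{A}_t$ and $\partial_t\nabla\mathbf{A}_t$ matches the paper: change variables $\tilde v\mapsto y=\mathcal V(t,0,\tilde x,\tilde v)$, move one derivative onto $f_0(\tilde x,\tilde v(y))\,|\det\nabla_{\tilde v}y|^{-1}$, and pick up $\ln\langle t\rangle$ from $\nabla_v^2\mathcal V$ via \eqref{eq: forward-in-time characteristic flow bounds-3}.

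The gap is the uniform $C^1$ bound \eqref{eq: derivative bound for A(v)}, which you leave open as ``the main obstacle.'' Your fallback does not close it. $C^1$-closeness of $\mathcal V(t,0,\cdot)$ to $\mathcal V^+$ gives no control of second derivatives, and $\nabla_v^2\mathcal V^+$ is not known to be bounded; this is exactly the difficulty in Remark \ref{remark: second derivative of A}. Any route that puts the first derivative on the measure must differentiate the Jacobian and loses $\ln\langle t\rangle$. This includes integrating by parts against $(\nabla_{\tilde v}\mathcal V)^{-1}$: even with the Piola identity, a derivative of $\det\nabla_{\tilde v}\mathcal V$ survives. So your claimed uniform $W^{1,\infty}$ bound on $\sigma_t$ is not available.

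The loss propagates. You bound $\partial_t\mathbf{A}_t$ by $\|\mathbf E_f(t)\|_{L^\infty}$ times ``the $C^1$ quantity.'' In your scheme that quantity is only $O(\eta_*\ln\langle t\rangle)$, so you would get $\langle t\rangle^{-1-\alpha}\ln\langle t\rangle$ in \eqref{eq: estimate for t derivative of A_t(v)}. That yields only $\langle t\rangle^{-\alpha}\ln\langle t\rangle$ in \eqref{eq: convergence estimate for A_t(v)}, not the stated rate.

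The missing observation is that the premise ``$\nabla^2 w$ is not integrable'' is false in the sense that matters. We have $|\nabla^2 w(z)|\lesssim |z|^{-(2+\alpha)}$ with $2+\alpha<3$, so $\nabla^2 w\in L^1+L^\infty$ after splitting at $|z|=1$. Since $|\nabla w(z)|\sim |z|^{-1-\alpha}$, the boundary term on a sphere of radius $\epsilon$ is $O(\epsilon^{1-\alpha})\to 0$, so you may differentiate under the integral. Hence
\[
\nabla\mathbf{A}_t(v)=-\iint_{\mathbb{R}^6}\nabla^2 w\big(v-\mathcal V(t,0,\tilde x,\tilde v)\big)f_0(\tilde x,\tilde v)\,d\tilde x\,d\tilde v ,
\]
and after your change of variables this is bounded by $\|f_0\|_{L^1_xL^\infty_v\cap L^1_{x,v}}\lesssim\eta_*$, uniformly for $t\in[0,\infty]$. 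This is just Lemma \ref{lemma: interpolation inequality} with $m=2$ applied to $\sigma_t$, which needs only the $L^1\cap L^\infty$ bounds you already proved. You invoke that same lemma with $m=2$ for the second derivative, so you implicitly rely on the local integrability you denied.

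Likewise, with no integration by parts and no logarithm,
\[
|\partial_t\mathbf{A}_t(v)|\le\|\mathbf E_f(t)\|_{L^\infty}\iint|\nabla^2 w(v-\mathcal V)|\,f_0\lesssim\eta_*\langle t\rangle^{-1-\alpha},
\]
which is what delivers \eqref{eq: convergence estimate for A_t(v)} as stated. A derivative must be moved onto the density only at the next order, where $\nabla^3 w\sim|z|^{-3-\alpha}$ is no longer locally integrable. That is precisely, and only, where the $\ln\langle t\rangle$ legitimately enters.
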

\begin{remark}
Combining equation \eqref{eq: convergence estimate for A_t(v)} with equation \eqref{Econv}, and using the weighted (in $x$) norm bound from \eqref{eq: small data condition}, one may further show convergence of the field, namely
$$\sup_{v \in \mathbb{R}^3} \left | t^{1+\alpha} \mathbf{E}_f(t,tv) - \mathbf{A}_\infty(v) \right | \lesssim \langle t\rangle^{-\alpha} .$$
\end{remark}

\begin{proof}
To establish \eqref{eq: derivative bound for A(v)}, we let $m=(m_1,m_2,m_3)\in\mathbb{Z}_{\geq 0}^3$ be a multi-index with $|m|=0,1$. Then, using \eqref{eq: v derivative of V limit} to change variables via \(y=\mathcal{V}(t,0,\tilde{x},\tilde{v})\) in the definition \eqref{eq: A_t(v) definition}, we obtain
\begin{align*}
\big|(\nabla^m_v \mathbf{A}_t)(v)\big|
&\leq \iint_{\mathbb{R}^6}
\big|(\nabla^m \nabla w)(v-y)\big|
f_0\big(\tilde{x},\tilde{v}(y)\big)\big|\textup{det}\big(\nabla_{\tilde{v}}y\big)\big|^{-1}\, d\tilde{x}dy \\
&\sim \iint_{\mathbb{R}^6}
\big|(\nabla^m \nabla w)(v-y)\big|
f_0\big(\tilde{x},\tilde{v}(y)\big)
d\tilde{x}dy\\
&\leq \int_{\mathbb{R}^3}
\|\nabla^m \nabla w\|_{L^1+L^\infty}\big\|
f_0\big(\tilde{x},\tilde{v}(y)\big)\big\|_{L_y^\infty\cap L_y^1}
d\tilde{x}\lesssim \|f_0\|_{L_{\tilde{x}}^1L_{\tilde{v}}^\infty\cap L_{\tilde{x},\tilde{v}}^1}.
\end{align*}
To prove \eqref{eq: derivative bound for A_t(v)}, we again change variables by \(\tilde{v}\mapsto y=\mathcal{V}(t,0,\tilde{x},\tilde{v})\) so that
\begin{equation}\label{eq: 2nd derivative of A(t)}
\begin{aligned}
\nabla_{v_j}\nabla_{v_k}\mathbf{A}_t(v)
&= \nabla_{v_j}\nabla_{v_k}\iint_{\mathbb{R}^6}
\nabla w(v - y)
\, \Big\{f_0\big(\tilde{x},\tilde{v}(y)\big)
\big|\textup{det}\big(\nabla_{\tilde{v}}y\big)\big|^{-1}\Big\}\, d\tilde{x}dy\\
&= -\iint_{\mathbb{R}^6}
\nabla_{v_k}\nabla w(v - y)
\, \nabla_{y_j}\Big\{f_0\big(\tilde{x},\tilde{v}(y)\big)
\big|\textup{det}\big(\nabla_{\tilde{v}}y\big)\big|^{-1}\Big\}\, d\tilde{x}dy.
\end{aligned}
\end{equation}
Note that by \eqref{eq: forward-in-time characteristic flow bounds-2}, $(\nabla_{\tilde{v}}y)^{-1}$ can be written as an absolutely convergent series 
$$(\nabla_{\tilde{v}}y)^{-1}=\Big(\mathbb{I}_3+\big(\nabla_{\tilde{v}}y-\mathbb{I}_3\big)\Big)^{-1}=\mathbb{I}_3
+\sum_{k=1}^\infty (-1)^k\Big(\nabla_{\tilde{v}}\mathcal{V}(t,0,\tilde{x},\tilde{v})-\mathbb{I}_3\Big)^k\mathbb{I}_3$$
that is, as a small perturbation by sums of products of $(\nabla_{\tilde{v}_j}\mathcal{V}^\ell(t,0,\tilde{x},\tilde{v})-\delta_{j\ell})$ terms for each element of the identity matrix $\mathbb{I}_3$. Hence, using Lemma \ref{lemma: forward-in-time characteristic flow bounds}, we find
$$\Big|\nabla_{\tilde{v}}\Big(\textup{det}(\nabla_{\tilde{v}}y)^{-1}\Big)\Big|\lesssim \ln\langle t\rangle,$$
where the logarithmic bound is obtained when $\nabla_{\tilde{v}}$ is applied to $\nabla_{\tilde{v}}\mathcal{V}(t,0,\tilde{x},\tilde{v})$, due to the second derivative bound in \eqref{eq: forward-in-time characteristic flow bounds-3}. Thus, we have 
$$\begin{aligned}
\Big|\nabla_{y_j}\Big\{f_0\big(\tilde{x},\tilde{v}(y)\big)
\big|\textup{det}\big(\nabla_{\tilde{v}}y\big)\big|^{-1}\Big\}\Big|&\leq\big|(\nabla_vf_0)\big(\tilde{x},\tilde{v}(y)\big)\big||\nabla_{y_j}\tilde{v}(y)|
\big|\textup{det}\big(\nabla_{\tilde{v}}y\big)\big|^{-1}\\
&\quad+\big|f_0\big(\tilde{x},\tilde{v}(y)\big)\big|\Big|\nabla_{\tilde{v}}\Big(
\big|\textup{det}\big(\nabla_{\tilde{v}}y\big)\big|^{-1}\Big)\Big||\nabla_{y_j}\tilde{v}|\\
&\lesssim \big|(\nabla_vf_0)\big(\tilde{x},\tilde{v}(y)\big)\big|+\big|f_0\big(\tilde{x},\tilde{v}(y)\big)\big|\ln\langle t\rangle.
\end{aligned}$$
Then, applying this bound to \eqref{eq: 2nd derivative of A(t)} and repeating the estimates in the proof of \eqref{eq: derivative bound for A(v)}, the estimate \eqref{eq: derivative bound for A_t(v)} follows.

In order to show \eqref{eq: estimate for t derivative of A_t(v)} and \eqref{eq: convergence estimate for A_t(v)}, we first note that
$$\partial_t\mathbf{A}_t(v)
= - \sum_{\ell=1}^3\iint_{\mathbb{R}^6}
\nabla\nabla_{x_\ell} w\big(v - \mathcal{V}(t,0,\tilde{x},\tilde{v})\big)\cdot\mathbf{E}^\ell_f\big(t,\mathcal{X}(t,0,\tilde{x},\tilde{v})\big)
\, f_0(\tilde{x},\tilde{v}) \, d\tilde{x} d\tilde{v}$$
and
$$\begin{aligned}
\partial_t\nabla_{v_j}\mathbf{A}_t(v)
&= \sum_{\ell=1}^3\iint_{\mathbb{R}^6}
\nabla\nabla_{x_\ell} w(v -y)\\
&\qquad\qquad\qquad\cdot\partial_{y_j}\Big\{\mathbf{E}^\ell_f\big(t,\mathcal{X}(t,0,\tilde{x},\tilde{v}(y))\big)
f_0\big(\tilde{x},\tilde{v}(y)\big)\big|\textup{det}\big(\nabla_{\tilde{v}}y\big)\big|^{-1}\Big\} \, d\tilde{x} dy.
\end{aligned}$$
Then, applying the same argument with the dispersion bound \eqref{eq: dispersion estimates} for the additional $\mathbf{E}_f$ term, one obtains \eqref{eq: estimate for t derivative of A_t(v)}.  
The convergence estimate \eqref{eq: convergence estimate for A_t(v)} then follows from the fundamental theorem of calculus.
\end{proof}

In conclusion, by Lemma~\ref{lemma: properties of the momentum limit}, 
we introduce the reference flow $\tilde{\Phi}^{\textup{ref}}(t)$ defined by
\[
\tilde{\Phi}^{\textup{ref}}(t)(x,v)
:=\left(x+tv-\frac{t^{1-\alpha}-1}{1-\alpha}\mathbf{A}_t(v),\, v\right),
\]
which can be regarded as a refinement of the free flow
\[
\Phi^{\textup{free}}(t)(x,v)=(x+tv,v).
\]

\begin{remark}\label{remark: second derivative of A}
The reference flow $\tilde{\Phi}^{\textup{ref}}(t)$ differs slightly from 
\(\Phi^{\textup{ref}}(t) = \big(x + t v - \frac{t^{1-\alpha}-1}{1-\alpha} \mathbf{A}_\infty(v),\, v \big)\)
introduced in Theorem~\ref{theorem: modified scattering}.  
For technical reasons, it is more convenient to work with $\tilde{\Phi}^{\textup{ref}}(t)$, as the subsequent analysis requires control of the second derivative of $\mathbf{A}_t(v)$ (or $\mathbf{A}_\infty(v)$). 
Estimating $\nabla^2 \mathbf{A}_t(v)$ is considerably easier than estimating $\nabla^2 \mathbf{A}_\infty(v)$. Indeed, controlling $\nabla^2 \mathbf{A}_\infty(v)$ seems to require bounds on (at least fractional) derivatives of the density $\rho_{f(t)}$ with additional decay, which is challenging and, moreover, would necessitate higher regularity of the initial data. 
This difficulty can be avoided by using $\mathbf{A}_t(v)$ instead.
\end{remark}

\subsection{Finite-time modified wave operator}\label{sec: Finite-time modified wave operator}

We define the \emph{finite-time modified wave operator} by
\begin{equation}\label{eq: finite-time modified wave operator}
\mathcal{W}^{\textup{mod}}(t)
:= \tilde{\Phi}^{\textup{ref}}(t)^{-1}\circ \Phi(t)
:\,[0,\infty)\times\mathbb{R}^3\times\mathbb{R}^3
\to \mathbb{R}^3\times\mathbb{R}^3.
\end{equation}
More explicitly, it is given by
\[
\mathcal{W}^{\textup{mod}}(t)(x,v)
= \bigl(\mathcal{W}_1^{\textup{mod}}(t)(x,v),\, \mathcal{W}_2^{\textup{mod}}(t)(x,v)\bigr),
\]
where
\[
\left\{\begin{aligned}
\mathcal{W}_1^{\textup{mod}}(t)(x,v)
&=
\mathcal{Y}(t,0,x,v)+ \frac{t^{1-\alpha}-1}{1-\alpha}
\,\mathbf{A}_t\bigl(\mathcal{V}(t,0,x,v)\bigr),\\
\mathcal{W}_2^{\textup{mod}}(t)(x,v)
&=
\mathcal{V}(t,0,x,v),
\end{aligned}\right.
\]
because
$$\mathcal{Y}(t,0,x,v)=\mathcal{X}(t,0,x,v)
- t\,\mathcal{V}(t,0,x,v).$$

By the computations carried out in the previous subsection, the modified wave operator admits the following integral representation.
\begin{lemma}[Integral representation of the modified wave operator]\label{lemma: Integral representation of the modified wave operator}
$$\left\{\begin{aligned}
\mathcal{W}_1^{\textup{mod}}(t)(x,v)
&= \mathcal{Y}(1,0,x,v)
+ \int_1^t \frac{1}{t_1^\alpha}\iint_{\mathbb{R}^6}
\bigg[
\nabla w\bigg(
\frac{\mathcal{X}(t_1,0,x,v)
-\mathcal{X}(t_1,0,\tilde{x},\tilde{v})}{t_1}
\bigg) \\
&\qquad\qquad\qquad\qquad\quad
- \nabla w\bigl(
\mathcal{V}(t,0,x,v)
- \mathcal{V}(t,0,\tilde{x},\tilde{v})
\bigr)
\bigg]
f_0(\tilde{x},\tilde{v})
\, d\tilde{x}d\tilde{v}\, dt_1,\\
\mathcal{W}_2^{\textup{mod}}(t)(x,v)
&= \mathcal{V}(1,0,x,v) + \int_1^t \mathbf{E}_f\bigl(t_1, \mathcal{X}(t_1,0,x,v)\bigr)\, dt_1.
\end{aligned}\right.$$
\end{lemma}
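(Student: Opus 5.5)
The plan is a direct computation from the explicit formula for $\mathcal{W}^{\textup{mod}}(t)$ given just above, combined with the integral form \eqref{eq: characteristic flow, integral form} restarted at time $1$. We restrict to $t\ge 1$, as in the preceding footnote. The second component is immediate. We have $\mathcal{W}_2^{\textup{mod}}(t)=\mathcal{V}(t,0,x,v)$, and integrating $\partial_{t_1}\mathcal{V}(t_1,0,x,v)=\mathbf{E}_f(t_1,\mathcal{X}(t_1,0,x,v))$ from $1$ to $t$ gives the stated formula.

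For the first component, we start from $\mathcal{Y}_t=\mathcal{X}_t-t\mathcal{V}_t$. Differentiating and using the characteristic ODE gives
\[
\partial_{t_1}\mathcal{Y}_{t_1}=\mathcal{V}_{t_1}-\mathcal{V}_{t_1}-t_1\mathbf{E}_f(t_1,\mathcal{X}_{t_1})=-t_1\mathbf{E}_f(t_1,\mathcal{X}_{t_1}).
\]
Hence
\[
\mathcal{Y}(t,0,x,v)=\mathcal{Y}(1,0,x,v)-\int_1^t t_1\mathbf{E}_f\big(t_1,\mathcal{X}(t_1,0,x,v)\big)\,dt_1 .
\]
Next we rewrite the field along the characteristic. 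We use $\rho_{f(t_1)}$ together with the representation $f(t_1)=\Phi(t_1)_\# f_0$ from Remark~\ref{remark: PDE ODE relation}, and change variables $(\tilde x,\tilde v)\mapsto\Phi(t_1)(\tilde x,\tilde v)$, which is measure preserving because the flow is Hamiltonian. This gives
\[
-t_1\mathbf{E}_f(t_1,\mathcal{X}_{t_1})=t_1\iint\nabla w\big(\mathcal{X}(t_1,0,x,v)-\mathcal{X}(t_1,0,\tilde x,\tilde v)\big)f_0(\tilde x,\tilde v)\,d\tilde x d\tilde v .
\]
Since $\nabla w(z)=-\alpha z|z|^{-2-\alpha}$ is homogeneous of degree $-(1+\alpha)$, we have $t_1\nabla w(z)=t_1^{-\alpha}\nabla w(z/t_1)$. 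This yields the first term in the bracket.

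It remains to produce the correction term. By the definition \eqref{eq: A_t(v) definition}, with the time $t$ fixed and $t_1$ as the integration variable,
\[
\int_1^t\frac{1}{t_1^{\alpha}}\iint\nabla w\big(\mathcal{V}(t,0,x,v)-\mathcal{V}(t,0,\tilde x,\tilde v)\big)f_0\,d\tilde x d\tilde v\,dt_1=-\frac{t^{1-\alpha}-1}{1-\alpha}\mathbf{A}_t\big(\mathcal{V}(t,0,x,v)\big).
\]
We subtract this inside the integral and add $\frac{t^{1-\alpha}-1}{1-\alpha}\mathbf{A}_t(\mathcal{V}_t)$ to $\mathcal{Y}_t$. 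The result is exactly the claimed formula for $\mathcal{W}_1^{\textup{mod}}(t)$.

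The only delicate point is justifying that the integrals converge, since $\nabla w$ is singular at the origin. Lemma~\ref{lemma: derivative bound for A(v)} already shows that $\mathbf{A}_t$ is finite; that argument used the change of variables $y=\mathcal{V}(t,0,\tilde x,\tilde v)$, justified by \eqref{eq: forward-in-time characteristic flow bounds-2}. The field term is finite by Lemma~\ref{lemma: interpolation inequality}. Therefore the two pieces can be combined linearly, and Fubini is not needed beyond what is already implicit in these bounds.
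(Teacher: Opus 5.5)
Your proposal is correct and follows essentially the same route as the paper: you restart the characteristic integral form at $t=1$, rewrite $\mathbf{E}_f$ along the characteristic via the measure-preserving change of variables and the homogeneity of $\nabla w$, and then use the definition of $\mathbf{A}_t$ to absorb the correction $\frac{t^{1-\alpha}-1}{1-\alpha}\mathbf{A}_t(\mathcal{V}(t,0,x,v))$ into the $t_1$-integral. Your sign bookkeeping is accurate, and your remark on absolute convergence of the two pieces is an appropriate (if slightly more explicit than the paper's) justification for combining them.
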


\begin{proof}
By \eqref{eq: characteristic flow, integral form}, we obtain the formula for $\mathcal{W}_2^{\textup{mod}}(t)$ and 
\[
\mathcal{W}_1^{\textup{mod}}(t)(x,v)
=\mathcal{Y}(1,0,x,v) -\int_1^t t_1 \, \mathbf{E}_f\big(t_1, \mathcal{X}(t_1,0,x,v)\big)\, dt_1+ \frac{t^{1-\alpha}-1}{1-\alpha}
\,\mathbf{A}_t\bigl(\mathcal{V}(t,0,x,v)\bigr).
\]
Note that by changing variables and using the homogeneity of $\nabla w$, we arrive at
\[
\begin{aligned}
\mathbf{E}_f(t_1,x)&=-\iint_{\mathbb{R}^6} \, \nabla w\big(x-\tilde{x}\big)f_0\big(\mathcal{X}(0,t_1,\tilde{x},\tilde{v}), \mathcal{V}(0,t_1,\tilde{x},\tilde{v})\big)\, d\tilde{x}d\tilde{v}\\
&= -\frac{1}{t_1^{\alpha+1}}\iint_{\mathbb{R}^6}
\nabla w\bigg(
\frac{x
-\mathcal{X}(t_1,0,\tilde{x},\tilde{v})}{t_1}
\bigg) f_0(\tilde{x},\tilde{v})
\, d\tilde{x}d\tilde{v}.
\end{aligned}
\]
Thus, by the definition of $\mathbf{A}_t$ in \eqref{eq: A_t(v) definition}, we prove the representation for $\mathcal{W}_1^{\textup{mod}}(t)$.
\end{proof}

The following proposition shows that the finite-time modified wave operator
converges as $t \to \infty$, with an explicit rate.  
We define the limit
\begin{equation}\label{eq: infinite-time modified wave operator}
\mathcal{W}^{\textup{mod},+}(x,v)
:= \lim_{t\to\infty} \mathcal{W}^{\textup{mod}}(t)(x,v),
\end{equation}
and refer to it as the \emph{modified wave operator}.  
For notational convenience, when treating the finite-time and infinite-time
operators simultaneously, we set
\[
\mathcal{W}^{\textup{mod}}(\infty)
:= \mathcal{W}^{\textup{mod},+}.
\]

\begin{proposition}[Modified wave operator]
\label{prop: modified wave operator}
Under the assumptions of Theorem~\ref{theorem: modified scattering}, the following statements hold.
\begin{enumerate}[$(1)$]
\item \emph{(Existence and convergence rate)}
For each $(x,v)\in\mathbb{R}^6$, the limit
$\mathcal{W}^{\textup{mod},+}(x,v)$ exists. Moreover, for all $t\geq 1$, we have
\[
\left\{\begin{aligned}
\bigg\|
\frac{1}{\langle x\rangle}\Big(\mathcal{W}_1^{\textup{mod}}(t)(x,v)
- \mathcal{W}_1^{\textup{mod},+}(x,v)\Big)
\bigg\|_{L_{x,v}^\infty(\mathbb{R}^6)}
&\lesssim \frac{\eta_*}{t^{2\alpha-1}}, \\
\big\|
\mathcal{W}_2^{\textup{mod}}(t)(x,v)
- \mathcal{W}_2^{\textup{mod},+}(x,v)
\big\|_{L_{x,v}^\infty(\mathbb{R}^6)}
&\lesssim \frac{\eta_*}{t^{\alpha}} .
\end{aligned}\right.
\]

\item \emph{(Almost identity)}
For all $t\geq 1$ (including $t=\infty$), we have
\[
\bigg\|
\frac{1}{\langle x\rangle}\Big(\mathcal{W}_1^{\textup{mod}}(t)(x,v)-x\Big)
\bigg\|_{L_{x,v}^\infty(\mathbb{R}^6)}
+
\big\|
\mathcal{W}_2^{\textup{mod}}(t)(x,v)-v
\big\|_{L_{x,v}^\infty(\mathbb{R}^6)}
\lesssim \eta_* .
\]
\end{enumerate}
\end{proposition}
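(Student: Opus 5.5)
The momentum component is handled by Lemma~\ref{lemma: properties of the momentum limit}. Since $\mathcal{W}_2^{\textup{mod}}(t)=\mathcal{V}(t,0,x,v)$, we set $\mathcal{W}_2^{\textup{mod},+}:=\mathcal{V}^+$. The rate $\eta_*t^{-\alpha}$ is then \eqref{eq: rate of convergence for momentum limit}, and the almost-identity bound is \eqref{eq: almost identity of V(t,0,x,v)}.

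For the position component we use the integral representation of Lemma~\ref{lemma: Integral representation of the modified wave operator} and write the integrand as $t_1^{-\alpha}\,G(t_1,t)$. The bracket difference is split by telescoping into three pieces:
\[
\nabla w\Big(\tfrac{\mathcal{X}_{t_1}-\tilde{\mathcal{X}}_{t_1}}{t_1}\Big)-\nabla w\big(\mathcal{V}_{t_1}-\tilde{\mathcal{V}}_{t_1}\big),\quad
\nabla w\big(\mathcal{V}_{t_1}-\tilde{\mathcal{V}}_{t_1}\big)-\nabla w\big(\mathcal{V}^+-\tilde{\mathcal{V}}^+\big),\quad
\nabla w\big(\mathcal{V}^+-\tilde{\mathcal{V}}^+\big)-\nabla w\big(\mathcal{V}_t-\tilde{\mathcal{V}}_t\big).
\]
After integrating against $f_0(\tilde{x},\tilde{v})\,d\tilde{x}\,d\tilde{v}$, the last piece equals $\mathbf{A}_t(\mathcal{V}_t)-\mathbf{A}_\infty(\mathcal{V}_t)$ plus $\mathbf{A}_\infty(\mathcal{V}^+)-\mathbf{A}_\infty(\mathcal{V}_t)$. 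By \eqref{eq: convergence estimate for A_t(v)}, \eqref{eq: derivative bound for A(v)} and \eqref{eq: rate of convergence for momentum limit}, it is $O(\eta_*t^{-\alpha})$ uniformly in $t_1$. Multiplied by $\int_1^t t_1^{-\alpha}\,dt_1\sim t^{1-\alpha}$, it contributes $O(\eta_*t^{1-2\alpha})$. This is exactly the claimed rate, and it is where $\alpha>\tfrac12$ enters.

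The second piece is handled the same way with $\mathbf{A}_{t_1}$ and $\mathbf{A}_\infty$. Its size is $O(\eta_*t_1^{-\alpha})$, so it is integrable against $t_1^{-\alpha}$ when $2\alpha>1$, with tail $\int_t^\infty t_1^{-2\alpha}\,dt_1\sim t^{1-2\alpha}$.

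The first piece is the genuine one. Note that
\[
\frac{\mathcal{X}_{t_1}-\tilde{\mathcal{X}}_{t_1}}{t_1}-\big(\mathcal{V}_{t_1}-\tilde{\mathcal{V}}_{t_1}\big)=\frac{\mathcal{Y}_{t_1}-\tilde{\mathcal{Y}}_{t_1}}{t_1}.
\]
From the integral form, $|\mathcal{Y}_{t_1}-x|\lesssim\eta_*t_1^{1-\alpha}$, and the same holds for $\tilde{\mathcal{Y}}$. So this difference has size $(|x-\tilde{x}|+\eta_*t_1^{1-\alpha})/t_1$.

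To exploit this, we write the $\nabla w$ difference via the mean value theorem, $\int_0^1\nabla^2 w(\cdot)\,d\theta$ times the difference. We then change variables $\tilde{v}\mapsto$ the $\theta$-interpolated argument, as in Lemma~\ref{lemma: derivative bound for A(v)}, whose Jacobian is close to the identity by Lemma~\ref{lemma: forward-in-time characteristic flow bounds}. This converts $\nabla^2 w$, which is homogeneous of degree $-2-\alpha$ and hence locally integrable and in $L^1+L^\infty$, into a bounded operation against $\langle\tilde{x}\rangle f_0\in L^1_{\tilde{x}}L^\infty_{\tilde{v}}\cap L^1$. Here the weighted hypothesis \eqref{eq: small data condition} is used, together with $|x-\tilde{x}|\le\langle x\rangle\langle\tilde{x}\rangle$.

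This yields $|G_1|\lesssim\eta_*\big(\langle x\rangle t_1^{-1}+\eta_*t_1^{-\alpha}\big)$. Integrated against $t_1^{-\alpha}$, the result converges, with tail $\langle x\rangle t^{-\alpha}+t^{1-2\alpha}\lesssim\langle x\rangle t^{1-2\alpha}$.

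With these bounds, define $\mathcal{W}_1^{\textup{mod},+}$ as the $t=\infty$ version: $\mathcal{Y}(1,0,x,v)$ plus the convergent integral with $\mathcal{V}^+$ in place of $\mathcal{V}_t$. Then $\mathcal{W}_1^{\textup{mod}}(t)-\mathcal{W}_1^{\textup{mod},+}$ is the sum of the $t$-dependent third-piece contribution and the tails of the first two pieces, giving the claimed $\langle x\rangle\eta_*t^{1-2\alpha}$.

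For the almost-identity statement (2), bound $|\mathcal{Y}(1,0,x,v)-x|\lesssim\eta_*$ and the full integral $\int_1^\infty t_1^{-\alpha}|G|\,dt_1\lesssim\eta_*\langle x\rangle$. The latter follows by summing the same piecewise estimates over $[1,\infty)$; for finite $t$, the third piece gives $\eta_*t^{1-2\alpha}\le\eta_*$.

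The main obstacle is the first piece: the change of variables for the $\theta$-interpolated argument, and controlling the singularity of $\nabla^2 w$ when the two arguments are near zero. I would first try to handle it via the mean value theorem combined with the Jacobian estimates (\eqref{eq: v derivative of V limit} and the bound $\|\nabla_{\tilde{v}}\mathcal{Y}\|\lesssim\eta_*t_1^{1-\alpha}$, so that $\nabla_{\tilde{v}}(\mathcal{Y}/t_1)$ is small). If that fails, the fallback is to split into $|\text{argument}|\lessgtr$ (difference size) and use the $C^{0,1-\alpha}$-type Hölder bound of $\nabla w$ in the near region.
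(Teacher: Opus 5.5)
Your proof is correct, and its analytic core matches the paper's: the mean value theorem along an interpolated argument, the change of variables $\tilde v\mapsto$ (interpolated argument) with Jacobian $-\mathbb{I}_3+O(\eta_*)$, the splitting $\nabla^2 w\in L^1+L^\infty$, and $|x-\tilde x|\le\langle x\rangle\langle\tilde x\rangle$ paired with $\langle\tilde x\rangle f_0$. What differs is the bookkeeping.

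The paper never writes down $\mathcal{W}_1^{\textup{mod},+}$. It proves a Cauchy estimate for $\mathcal{W}_1^{\textup{mod}}(t')-\mathcal{W}_1^{\textup{mod}}(t)$. On $[t,t']$ it interpolates between $\frac{\mathcal{X}_{t_1}-\tilde{\mathcal{X}}_{t_1}}{t_1}$ and $\mathcal{V}_{t'}-\tilde{\mathcal{V}}_{t'}$; on $[1,t]$ it interpolates between the velocity differences at $t'$ and $t$. It then runs the change-of-variables argument on both interpolations.

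You instead define the limit explicitly, as the $t=\infty$ version of Lemma~\ref{lemma: Integral representation of the modified wave operator}, and telescope through $\mathcal{V}_{t_1}$ and $\mathcal{V}^+$. Your second and third pieces are then exact differences of $\mathbf{A}$'s, so they can be quoted from \eqref{eq: derivative bound for A(v)} and \eqref{eq: convergence estimate for A_t(v)}. Only the $\mathcal{Y}/t_1$ piece, the one that actually needs the spatial weight, requires the genuine argument. Because you compare at the same time $t_1$, its Jacobian bound follows directly from \eqref{eq: forward-in-time characteristic flow bounds-1}--\eqref{eq: forward-in-time characteristic flow bounds-2}, without the extra $\int_{t_1}^{t'}\nabla\mathbf{E}_f\,\nabla_{\tilde v}\mathcal{X}\,d\tilde t$ term absorbed in \eqref{eq: y theta change of variable}.

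The price is that you rely on the $\partial_t\mathbf{A}_t$ estimate behind \eqref{eq: convergence estimate for A_t(v)}, which the paper's proof of this proposition does not need. The gain is an explicit formula for $\mathcal{W}^{\textup{mod},+}$, which makes the $t=\infty$ case of (2) immediate. Your treatment of (2), including the term $|\mathcal{Y}(1,0,x,v)-x|\lesssim\eta_*$, is also more complete than the paper's one-line sketch.

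Two small remarks. In your identity for the third piece, the second term should be $\mathbf{A}_\infty(\mathcal{V}_t)-\mathbf{A}_\infty(\mathcal{V}^+)$; this is harmless, since only its size is used. The H\"older fallback is unnecessary, because the change of variables already absorbs the singularity of $\nabla^2 w$.
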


\begin{proof}
To prove $(1)$ it suffices to consider $\mathcal{W}_1^{\textup{mod}}(t)$, because by the definition $\mathcal{W}_2^{\textup{mod}}(t)(x,v)=\mathcal{V}(t,0,x,v)$, the estimate for $\mathcal{W}_2^{\textup{mod}}(t)(x,v)$ in~$(1)$ follows from \eqref{eq: rate of convergence for momentum limit}.

To show that the limit of $\mathcal{W}_1^{\textup{mod}}(t)(x,v)$ exists as $t \to \infty$, we consider the difference
\[
\mathcal{W}_1^{\textup{mod}}(t')(x,v) - \mathcal{W}_1^{\textup{mod}}(t)(x,v) \quad \big(t' \geq t \gg 1\big).
\]
For its integral representation, for each $0\leq\theta\leq1$, we introduce the interpolated variables 
\[\begin{aligned}
y^\theta :&= \theta \bigg\{ \frac{\mathcal{X}(t_1,0,x,v) - \mathcal{X}(t_1,0,\tilde{x},\tilde{v})}{t_1} \bigg\} + (1-\theta) \big\{ \mathcal{V}(t',0,x,v) - \mathcal{V}(t',0,\tilde{x},\tilde{v}) \big\},\\
z^\theta :&= \theta \big\{ \mathcal{V}(t',0,x,v)
- \mathcal{V}(t',0,\tilde{x},\tilde{v}) \big\}+ (1-\theta) \big\{\mathcal{V}(t,0,x,v) - \mathcal{V}(t,0,\tilde{x},\tilde{v}) \big\}.
\end{aligned}\]
Then, by Lemma \ref{lemma: Integral representation of the modified wave operator} with $y^0=z^1$, we write 
$$\begin{aligned}
\mathcal{W}_1^{\textup{mod}}(t')(x,v) - \mathcal{W}_1^{\textup{mod}}(t)(x,v)&= \int_1^{t'} \frac{1}{t_1^\alpha}\iint_{\mathbb{R}^6}
\big(\nabla w(y^1)- \nabla w(z^1)\big)
f_0(\tilde{x},\tilde{v})
\, d\tilde{x}d\tilde{v}\, dt_1\\
&\quad-\int_0^{t} \frac{1}{t_1^\alpha}\iint_{\mathbb{R}^6}
\big(\nabla w(y^1)- \nabla w(z^0)\big)
f_0(\tilde{x},\tilde{v})
\, d\tilde{x}d\tilde{v}\, dt_1\\
&= \int_t^{t'} \frac{1}{t_1^\alpha}\iint_{\mathbb{R}^6}
\big(\nabla w(y^1)- \nabla w(y^0)\big)
f_0(\tilde{x},\tilde{v})
\, d\tilde{x}d\tilde{v}\, dt_1\\
&\quad-\int_1^{t} \frac{1}{t_1^\alpha}\iint_{\mathbb{R}^6}
\big(\nabla w(z^1)- \nabla w(z^0)\big)
f_0(\tilde{x},\tilde{v})
\, d\tilde{x}d\tilde{v}\, dt_1.
\end{aligned}$$
Hence, replacing $\nabla w(z^1)$ by $\nabla w(y^0)$ in the first integral and applying the fundamental theorem of calculus, we obtain
\[
\begin{aligned}
&\mathcal{W}_1^{\textup{mod}}(t')(x,v) - \mathcal{W}_1^{\textup{mod}}(t)(x,v)\\
&=  \iint_{\mathbb{R}^6} \bigg\{\int_0^1 \int_{t}^{t'}\frac{1}{t_1^\alpha} \nabla^2 w\big(y^\theta\big) \frac{d y^\theta}{d \theta} dt_1 d\theta-\int_0^1 \int_0^t\frac{1}{t_1^\alpha} \nabla^2 w\big(z^\theta\big) \frac{d z^\theta}{d \theta}dt_1 d\theta\bigg\}f_0(\tilde{x},\tilde{v}) \, d\tilde{x} d\tilde{v}.
\end{aligned}
\]
As $\mathcal{Y}(t,0,x,v)=\mathcal{X}(t,0,x,v)-t\mathcal{V}(t,0,x,v)$, by \eqref{eq: characteristic flow, integral form} and \eqref{eq: dispersion estimates}, we have
\[
\begin{aligned}
\bigg|\frac{dy^\theta}{d\theta} - \frac{x-\tilde{x}}{t_1}\bigg|
&\leq \frac{|\mathcal{Y}(t_1,0,x,v) - x|}{t_1}
+\frac{|\mathcal{Y}(t_1,0,\tilde{x},\tilde{v}) - \tilde{x}|}{t_1} \\
&\quad + \big| \mathcal{V}(t',0,x,v) - \mathcal{V}(t_1,0,x,v) \big| + \big| \mathcal{V}(t',0,\tilde{x},\tilde{v}) - \mathcal{V}(t_1,0,\tilde{x},\tilde{v}) \big| \\
&\lesssim \frac{1}{t_1} \int_0^{t_1} t_2 \|\mathbf{E}_f(t_2)\|_{L_x^\infty} \, dt_2
+ \int_{t_1}^{t'} \|\mathbf{E}_f(t_2)\|_{L_x^\infty} \, dt_2
\lesssim t_1^{-\alpha}
\end{aligned}
\]
and
$$\begin{aligned}
\bigg|\frac{dz^\theta}{d\theta}\bigg|&\leq \big| \mathcal{V}(t',0,x,v) - \mathcal{V}(t,0,x,v) \big|+\big| \mathcal{V}(t',0,\tilde{x},\tilde{v}) - \mathcal{V}(t,0,\tilde{x},\tilde{v}) \big|\\
&\lesssim\int_t^{t'} \|\mathbf{E}_f(t_2)\|_{L_x^\infty} \, dt_2
\lesssim t^{-\alpha}
\end{aligned}$$
Hence, it follows that
\[
\begin{aligned}
\frac{|\mathcal{W}_1^{\textup{mod}}(t')(x,v) - \mathcal{W}_1^{\textup{mod}}(t)(x,v)|}{\langle x\rangle}&\lesssim \int_0^1 \int_{t}^{t'}\frac{1}{t_1^{2\alpha}} \iint_{\mathbb{R}^6}  \big| \nabla^2 w(y^\theta) \big| 
(\langle x\rangle f_0)(\tilde{x},\tilde{v}) \, d\tilde{x} d\tilde{v} \, dt_1 \, d\theta\\
&\quad + \frac{1}{t^\alpha} \int_0^1 \int_0^t \frac{1}{t_1^\alpha}\iint_{\mathbb{R}^6}  \big| \nabla^2 w(z^\theta) \big| f_0(\tilde{x},\tilde{v}) \, d\tilde{x} d\tilde{v} \, dt_1 \, d\theta.
\end{aligned}
\]
Within the integrals on the right side, we perform the change of variables $\tilde{v} \mapsto y^\theta=y^\theta(t,x,v,\tilde{x},\tilde{v})$ and $\tilde{v} \mapsto z^\theta=z^\theta(t,x,v,\tilde{x},\tilde{v})$, respectively. Indeed, we find from \eqref{eq: characteristic flow, integral form} that
\[
\begin{aligned}
\nabla_{\tilde{v}}y^\theta&= -\frac{\theta}{t_1} \nabla_{\tilde{v}} \mathcal{X}(t_1,0,\tilde{x},\tilde{v}) 
   - (1-\theta) \nabla_{\tilde{v}} \mathcal{V}(t',0,\tilde{x},\tilde{v}) \\
&=- \nabla_{\tilde{v}} \mathcal{V}(t',0,\tilde{x},\tilde{v})-\frac{\theta}{t_1}\nabla_{\tilde{v}}\mathcal{Y}(t_1,0,\tilde{x},\tilde{v})+\theta\nabla_{\tilde{v}}\Big(\mathcal{V}(t',0,\tilde{x},\tilde{v})-\mathcal{V}(t_1,0,\tilde{x},\tilde{v})\Big)\\
&= - \nabla_{\tilde{v}} \mathcal{V}(t',0,\tilde{x},\tilde{v})
+ \frac{\theta}{t_1} \int_0^{t_1} \tilde{t} \nabla \mathbf{E}_f\big(\tilde{t}, \mathcal{X}(\tilde{t},0,\tilde{x},\tilde{v})\big)\nabla_{\tilde{v}} \mathcal{X}(\tilde{t},0,\tilde{x},\tilde{v}) \, d\tilde{t}\\
&\quad   + \theta \int_{t_1}^{t'} \nabla \mathbf{E}_f\big(\tilde{t}, \mathcal{X}(\tilde{t},0,\tilde{x},\tilde{v})\big)\nabla_{\tilde{v}} \mathcal{X}(\tilde{t},0,\tilde{x},\tilde{v}) \, d\tilde{t}.
\end{aligned}
\]
Hence, using Lemma \ref{lemma: forward-in-time characteristic flow bounds}, we obtain
\begin{equation}\label{eq: y theta change of variable}
\big\| \nabla_{\tilde{v}} y^\theta + \mathbb{I}_3 \big\|
\le \big\| \nabla_{\tilde{v}} \mathcal{V}(t',0,\tilde{x},\tilde{v}) - \mathbb{I}_3 \big\|+ \frac{\theta}{t_1} \int_0^{t_1} \tilde{t} \frac{\eta_*}{\langle \tilde{t} \rangle^{2+\alpha}} \langle \tilde{t} \rangle \, d\tilde{t}
+ \theta \int_{t_1}^\infty \frac{\eta_*}{\langle \tilde{t} \rangle^{2+\alpha}} \langle \tilde{t} \rangle \, d\tilde{t}  \ll 1.
\end{equation}
Moreover, by Lemma \ref{lemma: forward-in-time characteristic flow bounds} again, we have
\begin{equation}\label{eq: z theta change of variable}
\big\|\nabla_{\tilde{v}}z^\theta+\mathbb{I}_3\big\| \leq \theta \big\| \nabla_{\tilde{v}} \mathcal{V}(t',0,\tilde{x},\tilde{v})-\mathbb{I}_3 \big\| + (1-\theta) \big\| \nabla_{\tilde{v}} \mathcal{V}(t,0,\tilde{x},\tilde{v})-\mathbb{I}_3 \big\|\ll1.
\end{equation}
Thus, changing the variables in the two integrals accordingly, it follows that 
\[
\begin{aligned}
&\frac{1}{\langle x\rangle}\big|\mathcal{W}_1^{\textup{mod}}(t')(x,v) - \mathcal{W}_1^{\textup{mod}}(t)(x,v)\big|\\
&\lesssim \int_0^1 \int_{t}^{t'}  \frac{1}{t_1^{2\alpha}} 
\iint_{\mathbb{R}^6} |\nabla^2 w(y^\theta)| (\langle x \rangle f_0)(\tilde{x}, \tilde{v}(y^\theta)) \, dy^\theta d\tilde{x} \, dt_1 \, d\theta\\
&\quad + \frac{1}{t^\alpha}\int_0^1 \int_1^t \frac{1}{t_1^\alpha}\iint_{\mathbb{R}^6}  \big| \nabla^2 w(z^\theta) \big|  f_0(\tilde{x},\tilde{v}(z^\theta)) \, dz^\theta d\tilde{x}  \, dt_1 \, d\theta.
\end{aligned}
\]
Then, decomposing 
\[
\nabla^2 w(x) = \nabla^2 w(x) \mathbbm{1}_{|x|\le 1} + \nabla^2 w(x) \mathbbm{1}_{|x|>1}
\]
in the integral and using H\"older's inequality with $\|\nabla^2 w \mathbbm{1}_{|\cdot|\le 1}\|_{L^1}, \|\nabla^2 w \mathbbm{1}_{|\cdot|>1}\|_{L^\infty}\lesssim1$, one can show that for $t' \geq t \gg 1$, 
\[\begin{aligned}
&\frac{1}{\langle x\rangle}\big|\mathcal{W}_1^{\textup{mod}}(t')(x,v) - \mathcal{W}_1^{\textup{mod}}(t)(x,v)\big|\\
&\lesssim \frac{1}{t^{2\alpha-1}}\Big(\big\|(\langle x \rangle f_0)\big(\tilde{x}, \tilde{v}(y^\theta)\big)\big\|_{L_{\tilde{x}}^1 L_{y^\theta}^\infty \cap L_{x,y^\theta}^1}+\big\|(\langle x \rangle f_0)\big(\tilde{x}, \tilde{v}(z^\theta)\big)\big\|_{L_{\tilde{x}}^1 L_{z^\theta}^\infty \cap L_{x,z^\theta}^1}\Big)\\
&\sim \frac{1}{t^{2\alpha-1}}\big\|\langle x \rangle f_0\big\|_{L_x^1 L_v^\infty \cap L_{x,v}^1}.
\end{aligned}\]
Therefore, for each $(x,v)$, the limit $\mathcal{W}_1^{\textup{mod},+}(x,v)$ exists. Taking $t'\to\infty$, we also obtain
\[
\frac{1}{\langle x\rangle}\big|\mathcal{W}_1^{\textup{mod}}(t')(x,v) - \mathcal{W}_1^{\textup{mod}}(t)(x,v)\big|
\lesssim \frac{1}{t^{2\alpha-1}} 
\|\langle x \rangle f_0\|_{L_x^1 L_v^\infty \cap L_{x,v}^1},
\]
which completes the proof of statement $(1)$ of the proposition.

Similarly, to show $(2)$, we recall the integral representation in Lemma \ref{lemma: Integral representation of the modified wave operator} and find
\[
\begin{aligned}
\mathcal{W}_1^{\textup{mod}}(t)(x,v) - x 
&= \int_0^t \iint_{\mathbb{R}^6} \frac{1}{t_1^\alpha} \bigg\{ 
\nabla w\bigg( \frac{\mathcal{X}(t_1,0,x,v) - \mathcal{X}(t_1,0,\tilde{x},\tilde{v})}{t_1} \bigg) \\
&\qquad\qquad\qquad\qquad - \nabla w\big( \mathcal{V}(t,0,x,v) - \mathcal{V}(t,0,\tilde{x},\tilde{v}) \big) 
\bigg\} f_0(\tilde{x},\tilde{v}) \, d\tilde{x} d\tilde{v}\,  dt_1
\end{aligned}
\]
and
\[
\mathcal{W}_2^{\textup{mod}}(t)(x,v) - v = \int_0^t \mathbf{E}_f\big(t_1, \mathcal{X}(t_1,0,x,v)\big) \, dt_1.
\]
Then, repeating the estimates in the proof of statement (1), statement (2) of the proposition follows.
\end{proof}

\section{Weighted $W^{1,\infty}$-estimates for modified distributions}
\label{sec: Energy estimates for modified distributions} 

Suppose that \( f(t) = f(t,x,v) \) is a global solution to the Vlasov--Riesz equation
\begin{equation}\label{eq: VR eq-energy estimates}
\partial_t f + v \cdot \nabla_x f + \mathbf{E}_f \cdot \nabla_v f = 0.
\end{equation}
We define the modified distribution function
\begin{equation}\label{eq: g(t) definition}
g(t,x,v) := f\left( t, x + t v - \frac{t^{1-\alpha}-1}{1-\alpha} \mathbf{A}_t(v), v \right)
= f\big(t, \tilde{\Phi}^{\mathrm{ref}}(t)(x,v)\big).
\end{equation}
The goal of this section is to establish global-in-time bounds for \( g(t) \).

\begin{proposition}[Global-in-time bounds for the modified distribution]\label{prop: global bounds for modified distribution}
Under the assumptions of Theorem~\ref{theorem: modified scattering}, the modified distribution \( g(t) \) satisfies
\begin{equation}\label{eq: g weight derivative L^2 bound}
\sup_{t \ge 1}\bigg(
\big\| \langle x \rangle g(t) \big\|_{L_{x,v}^\infty(\mathbb{R}^6)}
+ \big\| \langle x \rangle \nabla_x g(t) \big\|_{L_{x,v}^\infty(\mathbb{R}^6)}
+ \frac{1}{t^{1-\alpha}}
\big\| \langle x \rangle \nabla_v g(t) \big\|_{L_{x,v}^\infty(\mathbb{R}^6)}
\bigg)
\lesssim \eta_*.
\end{equation}
\end{proposition}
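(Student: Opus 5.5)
The plan is to derive a transport equation for $g$ directly from \eqref{eq: VR eq-energy estimates}. We then run an $L^\infty$ estimate along its characteristics, with a weight and with derivatives, closing everything by Gr\"onwall. Write $c(t):=\frac{t^{1-\alpha}-1}{1-\alpha}$ and $Z:=x+tv-c(t)\mathbf{A}_t(v)$, so $g(t,x,v)=f(t,Z,v)$. Then
\[
\nabla_x g=\nabla_x f,\qquad \nabla_v g=\nabla_v f+\big(t\mathbb{I}_3-c(t)\nabla\mathbf{A}_t(v)\big)^\top\nabla_x f .
\]
Substituting $\partial_t f=-v\cdot\nabla_x f-\mathbf{E}_f\cdot\nabla_v f$ should give
\[
\partial_t g+\mathbf{a}\cdot\nabla_v g+\mathbf{b}\cdot\nabla_x g=0,
\]
with coefficients
\[
\mathbf{a}=\mathbf{E}_f(t,Z),\qquad
\mathbf{b}=t^{-\alpha}\mathbf{A}_t(v)+c(t)\,\partial_t\mathbf{A}_t(v)-\big(t\mathbb{I}_3-c(t)\nabla\mathbf{A}_t(v)\big)^\top\mathbf{E}_f(t,Z).
\]
The vector field $(\mathbf{b},\mathbf{a})$ is divergence-free up to harmless terms, but we only need $L^\infty$ bounds.

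The key step, which I expect to be the main obstacle, is to show that $\mathbf{b}$ is integrable in time with at most linear growth in $x$. By Lemma~\ref{lemma: derivative bound for A(v)}, the terms $c\,\partial_t\mathbf{A}_t$ and $c\,\nabla\mathbf{A}_t\,\mathbf{E}_f$ are $O(\eta_* t^{-2\alpha})$, using \eqref{eq: estimate for t derivative of A_t(v)} and \eqref{eq: dispersion estimates}. For the main cancellation, I will use the representation from the proof of Lemma~\ref{lemma: Integral representation of the modified wave operator}:
\[
t\,\mathbf{E}_f(t,Z)=-t^{-\alpha}\iint\nabla w\Big(\tfrac{Z-\mathcal{X}(t,0,\tilde x,\tilde v)}{t}\Big)f_0\,d\tilde xd\tilde v .
\]
Here $\frac{Z-\mathcal{X}(t,0,\tilde x,\tilde v)}{t}=v-\mathcal{V}(t,0,\tilde x,\tilde v)+\frac{x-c\mathbf{A}_t(v)-\mathcal{Y}(t,0,\tilde x,\tilde v)}{t}$. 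Subtracting $t^{-\alpha}\mathbf{A}_t(v)$ and applying the fundamental theorem of calculus in an interpolation parameter $\theta$ (exactly as with $y^\theta$ in Proposition~\ref{prop: modified wave operator}), the error is
\[
t^{-\alpha}\int_0^1\!\!\iint|\nabla^2 w(\cdot)|\,\frac{|x|+|\tilde x|+t^{1-\alpha}}{t}\,f_0\,d\tilde xd\tilde v\,d\theta .
\]
Here I use $|\mathcal{Y}(t,0,\tilde x,\tilde v)-\tilde x|\lesssim t^{1-\alpha}$, which follows from \eqref{eq: characteristic flow, integral form}. Changing variables $\tilde v\mapsto$ the interpolated argument (justified by Lemma~\ref{lemma: forward-in-time characteristic flow bounds} as in \eqref{eq: y theta change of variable}) and splitting $\nabla^2 w$ into its near and far parts gives
\[
|\mathbf{b}|\lesssim\eta_*\big(\langle x\rangle t^{-1-\alpha}+t^{-2\alpha}\big).
\]
This is integrable on $[1,\infty)$ precisely because $\alpha>\frac12$. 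Similarly, $|\mathbf{a}|\lesssim\eta_* t^{-1-\alpha}$.

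For derivatives of the coefficients, I will bound each term directly:
\begin{itemize}
\item $|\nabla_x\mathbf{a}|\lesssim\eta_* t^{-2-\alpha}$ and $|\nabla_v\mathbf{a}|\lesssim\eta_* t^{-1-\alpha}$, using $\|\nabla_v Z\|\lesssim t$ and \eqref{eq: dispersion estimates}.
\item $|\nabla_x\mathbf{b}|\lesssim\eta_* t^{-1-\alpha}$.
\item $|\nabla_v\mathbf{b}|\lesssim\eta_*\big(t^{-\alpha}+t^{-2\alpha}\ln\langle t\rangle\big)$. Here the $t^2\nabla\mathbf{E}_f$ term gives $t^{-\alpha}$, and $\nabla^2\mathbf{A}_t$ and $\partial_t\nabla\mathbf{A}_t$ contribute the logarithms, via \eqref{eq: derivative bound for A_t(v)} and \eqref{eq: estimate for t derivative of A_t(v)}.
\end{itemize}

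With these bounds we can close the estimates. Along the characteristics $\dot{x}=\mathbf{b}$, $\dot{v}=\mathbf{a}$ from $t=1$, the value of $g$ is constant. Moreover, $\frac{d}{dt}\langle x\rangle\le|\mathbf{b}|\lesssim\eta_*\big(\langle x\rangle t^{-1-\alpha}+t^{-2\alpha}\big)$, so Gr\"onwall gives $\langle x(t)\rangle\sim\langle x(1)\rangle$ uniformly. Hence $\|\langle x\rangle g(t)\|_{L^\infty}\lesssim\|\langle x\rangle g(1)\|_{L^\infty}$.

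Differentiating the equation gives a linear system for $(\nabla_x g,\nabla_v g)$ with forcing $-(\nabla\mathbf{a})^\top\nabla_v g-(\nabla\mathbf{b})^\top\nabla_x g$. Setting
\[
M(t)=\sup\big(\langle x\rangle|\nabla_x g|+t^{-(1-\alpha)}\langle x\rangle|\nabla_v g|\big),
\]
the bounds above give $\frac{d}{dt}\langle x\rangle|\nabla_x g|\lesssim\eta_* t^{-1-2\alpha}M+\eta_* t^{-1-\alpha}M$. For the velocity derivative, $\langle x\rangle|\nabla_v g|$ grows at rate $\lesssim t^{-\alpha}M$ plus integrable multiples of $t^{1-\alpha}M$, which is consistent with the $t^{1-\alpha}$ normalization. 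Gr\"onwall then yields $M(t)\lesssim M(1)$.

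Finally, the data at $t=1$ is controlled by $\eta_*$. Indeed, $g(1)=f(1,x+v,v)=f_0\big(\Phi(1)^{-1}(x+v,v)\big)$, and by Lemma~\ref{lemma: dispersion bounds imply characteristic flow estimates} this flow is a near-identity diffeomorphism on $[0,1]$ with $|\mathcal{X}(0,1,\cdot)-(x)|\lesssim1$. Combined with \eqref{eq: small data condition}, this gives $\|\langle x\rangle g(1)\|_{W^{1,\infty}}\lesssim\eta_*$, completing \eqref{eq: g weight derivative L^2 bound}.
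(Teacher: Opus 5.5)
Your proposal is correct and follows essentially the same route as the paper: your $(\mathbf{b},\mathbf{a})$ is the paper's $(\mathbf{F}_1,\mathbf{F}_2)$ (which is in fact exactly divergence-free, not just up to harmless terms), and you use the same $\theta$-interpolation and near-identity change of variables to control $t\mathbf{E}_f - t^{-\alpha}\mathbf{A}_t$, where your bound $\eta_*(\langle x\rangle t^{-1-\alpha}+t^{-2\alpha})$ slightly refines the paper's $\eta_*\langle x\rangle t^{-2\alpha}$. The coefficient-derivative bounds and the Gr\"onwall closure for the mixed quantity $\mathbf{M}(t)$ also match, with your characteristic (Lagrangian) formulation replacing the paper's Duhamel/pushforward formula, and your remark that $\frac{t^{1-\alpha}-1}{1-\alpha}=0$ at $t=1$, so that $g(1,x,v)=f(1,x+v,v)$, is correct and a bit cleaner than the paper's treatment of the $t=1$ data.
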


\begin{remark}
In Proposition~\ref{prop: global bounds for modified distribution}, the $O(t^{1-\alpha})$ bound is obtained for 
\(\|\langle x \rangle \nabla_v g(t)\|_{L_{x,v}^\infty}\), which follows from the weak decay estimate for \(\nabla_v \mathbf{F}_1\) in 
\eqref{eq: g eq. vector field bounds} (see Remark~\ref{remark: Equation for the modified distribution}).
We do not claim that this bound is optimal; however, it nos not necessary to improve this bound for the main result 
(see Remark~\ref{remark: modified scattering proof key remark}).
\end{remark}

For the proof, we first derive the equation for \(g(t)\) from \eqref{eq: VR eq-energy estimates}, 
together with suitable bounds.
\begin{lemma}[Equation for the modified distribution \( g \)]\label{lemma: Equation for the modified distribution}
Under the assumptions of Theorem~\ref{theorem: modified scattering}, the following results hold:
\begin{enumerate}[$(1)$]
\item The modified distribution \( g(t) \) satisfies the equation
\begin{equation}\label{eq: modified VR eq-energy estimates}
\partial_t g + \mathbf{F} \cdot \nabla_{(x,v)} g = 0,
\end{equation}
where the vector field \( \mathbf{F} = ( \mathbf{F}_1(t,x,v), \mathbf{F}_2(t,x,v) ) \), with the notation \( \mathbf{v} = (\mathbf{v}^1,\mathbf{v}^2,\mathbf{v}^3) \), is given by
\begin{equation}\label{eq: modified VR eq-energy estimates-2}
\left\{
\begin{aligned}
\mathbf{F}_1^j(t,x,v) &= -\left\{ t \mathbf{E}_f^j\big( t, \tilde{\Phi}_1^{\textup{ref}}(t)(x,v)\big) - \frac{1}{t^\alpha} \mathbf{A}_t^j(v)\right\}\\
&\quad+ \frac{t^{1-\alpha}-1}{1-\alpha}\Big\{\partial_t\mathbf{A}_t^j(v)+\sum_{\ell=1}^3\mathbf{E}_f^\ell\big( t, \tilde{\Phi}_1^{\textup{ref}}(t)(x,v)\big)\nabla_{v_\ell} \mathbf{A}_t^j(v)\Big\}, \\
\mathbf{F}_2^j(t,x,v) &= \mathbf{E}_f^j\big( t, \tilde{\Phi}_1^{\textup{ref}}(t)(x,v)\big).
\end{aligned}
\right.
\end{equation}
\item The vector field \( \mathbf{F} \) is divergence-free, i.e., 
\begin{equation}\label{eq: g eq. divergence-free vector field}
\nabla_{(x,v)} \cdot \mathbf{F} = \nabla_x \cdot \mathbf{F}_1 + \nabla_v \cdot \mathbf{F}_2 = 0.
\end{equation}

\item \( \mathbf{F}_1(t) \) and \( \mathbf{F}_2(t) \) satisfy the following bounds:
\begin{equation}\label{eq: g eq. vector field bounds}
\left\{
\begin{aligned}
\sup_{t \geq 1}\Big(t^{1+\alpha} \|\nabla_x \mathbf{F}_1(t)\|_{L_x^\infty(\mathbb{R}^3)} + t^{2+\alpha} \|\nabla_x \mathbf{F}_2(t)\|_{L_x^\infty(\mathbb{R}^3)}\Big) &\lesssim \eta_*, \\
\sup_{t \geq 1}\Big(t^\alpha \|\nabla_v \mathbf{F}_1(t)\|_{L_x^\infty(\mathbb{R}^3)} + t^{1+\alpha} \|\nabla_v \mathbf{F}_2(t)\|_{L_x^\infty(\mathbb{R}^3)}\Big) &\lesssim \eta_*
\end{aligned}
\right.
\end{equation}
and
\begin{equation}\label{eq: g eq. vector field bounds-2}
\sup_{t \geq 1} \left ( t^{2\alpha} \bigg\|\frac{1}{\langle x \rangle}\mathbf{F}_1(t)\bigg\|_{L_{x,v}^\infty(\mathbb{R}^6)} \right )\lesssim \eta_*.
\end{equation}
\end{enumerate}
\end{lemma}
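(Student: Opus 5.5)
The plan is to verify the three parts in order: a direct chain-rule computation for $(1)$, an algebraic cancellation for $(2)$, and dispersion bounds plus one mean-value argument for $(3)$. Throughout, write $c(t):=\frac{t^{1-\alpha}-1}{1-\alpha}$, so $c'(t)=t^{-\alpha}$ and $c(t)\le t^{1-\alpha}/(1-\alpha)$. Also write $X:=\tilde{\Phi}^{\textup{ref}}_1(t)(x,v)=x+tv-c(t)\mathbf{A}_t(v)$.

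\emph{Part $(1)$.} Differentiate $g(t,x,v)=f(t,X,v)$:
\[
\partial_t g=\partial_t f+\big(v-t^{-\alpha}\mathbf{A}_t-c\,\partial_t\mathbf{A}_t\big)\cdot\nabla_x f,\qquad \nabla_x g=\nabla_x f,
\]
\[
\nabla_{v_j}g=\nabla_{v_j}f+t\,\nabla_{x_j}f-c\sum_\ell\nabla_{v_j}\mathbf{A}_t^\ell\,\nabla_{x_\ell}f .
\]
I then substitute $\partial_t f=-v\cdot\nabla_x f-\mathbf{E}_f(t,X)\cdot\nabla_v f$, which makes the $v\cdot\nabla_x f$ terms cancel. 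Next I solve the last identity for $\nabla_v f$ in terms of $\nabla_v g$ and $\nabla_x g$. Collecting the coefficient of $\nabla_x g$ gives exactly $\mathbf{F}_1$ in \eqref{eq: modified VR eq-energy estimates-2}, and the coefficient of $\nabla_v g$ is $\mathbf{F}_2=\mathbf{E}_f(t,X)$.

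\emph{Part $(2)$.} This is a cancellation. By the chain rule, $\nabla_v\cdot\mathbf{F}_2=t\,(\nabla\cdot\mathbf{E}_f)(t,X)-c\sum_{j,k}\partial_k\mathbf{E}_f^j(t,X)\,\partial_{v_j}\mathbf{A}_t^k(v)$. In $\mathbf{F}_1$, only the terms containing $\mathbf{E}_f(t,X)$ depend on $x$. Hence $\nabla_x\cdot\mathbf{F}_1=-t\,(\nabla\cdot\mathbf{E}_f)(t,X)+c\sum_{j,\ell}\partial_j\mathbf{E}_f^\ell(t,X)\,\partial_{v_\ell}\mathbf{A}_t^j(v)$. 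After relabeling indices the two expressions are negatives of each other, so their sum vanishes.

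\emph{Part $(3)$, estimate \eqref{eq: g eq. vector field bounds}.} I differentiate the formulas, using $\nabla_x X=\mathbb{I}_3$ and $\nabla_v X=t\mathbb{I}_3-c\nabla\mathbf{A}_t$, whose norm is $\lesssim t$ by \eqref{eq: derivative bound for A(v)}. The inputs are \eqref{eq: dispersion estimates} and Lemma~\ref{lemma: derivative bound for A(v)}.
\begin{itemize}
\item $\nabla_x\mathbf{F}_2=\nabla\mathbf{E}_f=O(t^{-2-\alpha})$.
\item $\nabla_v\mathbf{F}_2=\nabla\mathbf{E}_f\,\nabla_vX=O(t^{-1-\alpha})$.
\item $\nabla_x\mathbf{F}_1=-t\nabla\mathbf{E}_f+c\,\nabla\mathbf{E}_f\nabla\mathbf{A}_t=O(t^{-1-\alpha})$.
\item For $\nabla_v\mathbf{F}_1$ the worst terms are of size $t^{-\alpha}$: the term $t^2\nabla\mathbf{E}_f$ and the term $t^{-\alpha}\nabla\mathbf{A}_t$. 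The terms $c\,\partial_t\nabla\mathbf{A}_t$ and $c\,\mathbf{E}_f\nabla^2\mathbf{A}_t$ are $O(t^{-2\alpha}\ln\langle t\rangle)$ by \eqref{eq: derivative bound for A_t(v)} and \eqref{eq: estimate for t derivative of A_t(v)}. These are $\lesssim t^{-\alpha}$ because $\alpha>0$.
\end{itemize}

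\emph{Part $(3)$, estimate \eqref{eq: g eq. vector field bounds-2}.} The terms $c\,\partial_t\mathbf{A}_t$ and $c\,\mathbf{E}_f\nabla\mathbf{A}_t$ are already $O(t^{-2\alpha})$. The main obstacle is the difference $t\mathbf{E}_f(t,X)-t^{-\alpha}\mathbf{A}_t(v)$. I would use the representation from the proof of Lemma~\ref{lemma: Integral representation of the modified wave operator}:
\[
t\mathbf{E}_f(t,X)-t^{-\alpha}\mathbf{A}_t(v)=-t^{-\alpha}\iint\Big[\nabla w\big(v-\tilde{\mathcal V}_t+\delta\big)-\nabla w\big(v-\tilde{\mathcal V}_t\big)\Big]f_0(\tilde x,\tilde v)\,d\tilde x\,d\tilde v .
\]
Here $\tilde{\mathcal V}_t=\mathcal{V}(t,0,\tilde x,\tilde v)$ and $\delta=\big(x-\mathcal{Y}(t,0,\tilde x,\tilde v)-c\mathbf{A}_t(v)\big)/t$. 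From the integral form of $\mathcal{Y}$ and \eqref{eq: dispersion estimates}, $|\mathcal{Y}(t,0,\tilde x,\tilde v)-\tilde x|\lesssim t^{1-\alpha}$. Together with $|c\mathbf{A}_t|\lesssim t^{1-\alpha}$ and $t^{-1}\le t^{-\alpha}$ for $t\ge1$, this gives $|\delta|\lesssim t^{-\alpha}\langle x\rangle\langle\tilde x\rangle$.

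By the mean value theorem along $\theta\mapsto v-\tilde{\mathcal V}_t+\theta\delta$, the difference is bounded by $t^{-2\alpha}\langle x\rangle$ times
\[
\int_0^1\iint|\nabla^2 w(\cdot)|\,(\langle \tilde x\rangle f_0)(\tilde x,\tilde v)\,d\tilde x\,d\tilde v\,d\theta .
\]
To bound this integral I change variables $\tilde v\mapsto v-\tilde{\mathcal V}_t+\theta\delta$, which is legitimate exactly as in \eqref{eq: y theta change of variable}. The $\tilde v$-Jacobian is close to $-\mathbb{I}_3$, since the $\tilde v$-derivatives of $\mathcal{Y}/t$ are small by \eqref{eq: forward-in-time characteristic flow bounds-1} and those of $\tilde{\mathcal V}_t$ by \eqref{eq: forward-in-time characteristic flow bounds-2}. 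Then I split $\nabla^2 w=\nabla^2w\,\mathbbm{1}_{|\cdot|\le1}+\nabla^2w\,\mathbbm{1}_{|\cdot|>1}$, which lie in $L^1$ and $L^\infty$ respectively; the $L^1$ part is fine because $2+\alpha<3$. This bounds the integral by $\|\langle x\rangle f_0\|_{L^1_xL^\infty_v\cap L^1_{x,v}}\lesssim\eta_*$, which gives \eqref{eq: g eq. vector field bounds-2}.
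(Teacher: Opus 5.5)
Your proposal is correct and follows essentially the same route as the paper. The paper also derives $\mathbf{F}$ by the chain rule, gets the derivative bounds from the dispersion estimates together with Lemma~\ref{lemma: derivative bound for A(v)}, and proves the weighted bound via homogeneity of $\nabla w$, interpolation in $\theta$, a near-identity change of variables in $\tilde v$, and the $L^1+L^\infty$ splitting of $\nabla^2 w$. Your explicit index cancellation for $(2)$ and your direct computation of $\nabla_v\mathbf{F}_2$ fill in details the paper leaves implicit; the paper's appeal to the divergence-free property for $\nabla_v\mathbf{F}_2$ only controls its trace, so computing it directly, as you do, is the cleaner justification.
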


\begin{remark}\label{remark: Equation for the modified distribution}
Within \eqref{eq: g eq. vector field bounds}, the worst decay rate, namely \( O(t^{-\alpha}) \), arises from \( \nabla_v \mathbf{F}_1(t) \). 
\end{remark}

\begin{proof}[Proof of Lemma \ref{lemma: Equation for the modified distribution}]

For convenience, and with a slight abuse of notation, we write
\begin{equation}\label{eq: E_f, x variable omitted}
\mathbf{E}_f(t, \cdots)=\mathbf{E}_f\Big(t, \tilde{\Phi}_1^{\textup{ref}}(t)(x,v)\Big)
\end{equation}
for the force field. This omission is harmless, since we ultimately take the \(L_x^\infty\)-norm of the force field and its derivatives.

For $(1)$, we use the definition of \( g(t) \) and the equation \eqref{eq: VR eq-energy estimates} for \(f(t)\) to observe that \( g \) solves the equation
\[
\begin{aligned}
\partial_t g(t,x,v) &= \left\{\partial_t f + v \cdot \nabla_x f - \left(\frac{1}{t^\alpha} \mathbf{A}_t(v) + \frac{t^{1-\alpha}-1}{1-\alpha}\partial_t \mathbf{A}_t(v)\right) \cdot \nabla_x f \right\} \big(t, \tilde{\Phi}^{\textup{ref}}(t)(x,v)\big) \\
&= - \left(\frac{1}{t^\alpha} \mathbf{A}_t(v) + \frac{t^{1-\alpha}-1}{1-\alpha}\partial_t \mathbf{A}_t(v)\right) \cdot \nabla_x g(t,x,v) -  \left(\mathbf{E}_f \cdot \nabla_v f\right)\big(t, \tilde{\Phi}^{\textup{ref}}(t)(x,v)\big).
\end{aligned}
\]
Indeed, by applying the Leibniz rule, we have
\[
(\nabla_{v_\ell} f)\big(t, \tilde{\Phi}^{\textup{ref}}(t)(x,v)\big) = \nabla_{v_\ell} g(t,x,v) - t \nabla_{x_\ell} g(t,x,v) +  \frac{t^{1-\alpha}-1}{1-\alpha} \nabla_{v_\ell} \mathbf{A}_t(v) \cdot \nabla_x g(t,x,v).
\]
Hence, using the notation \eqref{eq: E_f, x variable omitted}, we deduce 
\[
\begin{aligned}
\partial_t g(t,x,v) &= \left\{ t \mathbf{E}_f(t, \cdots) - \frac{1}{t^\alpha} \mathbf{A}_t(v)\right\} \cdot \nabla_x g(t,x,v) \\
&\quad - \frac{t^{1-\alpha}-1}{1-\alpha} \left\{\partial_t \mathbf{A}_t(v) + \sum_{\ell=1}^3 \mathbf{E}_f^\ell(t, \cdots) \nabla_{v_\ell} \mathbf{A}_t(v)\right\} \cdot \nabla_x g(t,x,v) \\
&\quad - \mathbf{E}_f(t, \cdots) \cdot \nabla_v g(t,x,v),
\end{aligned}
\]
which is precisely \eqref{eq: modified VR eq-energy estimates} with \eqref{eq: modified VR eq-energy estimates-2}. On the other hand, statement $(2)$ of the lemma follows immediately from the definitions in \eqref{eq: modified VR eq-energy estimates-2}.

It remains to show $(3)$. In fact, by \eqref{eq: g eq. divergence-free vector field}, it suffices to prove the bounds for \( \nabla_x \mathbf{F}_1(t) \), \( \nabla_v \mathbf{F}_1(t) \), and \( \nabla_x \mathbf{F}_2(t) \). Note that by direct calculations,
\begin{equation}\label{eq: derivatives of F vector field}
\left\{
\begin{aligned}
\nabla_{x_k} \mathbf{F}_1^j &= -t \nabla_{x_k} \mathbf{E}_f^j(t, \cdots) + \frac{t^{1-\alpha}-1}{1-\alpha} \sum_{\ell=1}^3 \nabla_{x_k} \mathbf{E}_f^\ell(t, \cdots) \nabla_{v_\ell} \mathbf{A}_t^j(v), \\
\nabla_{v_k} \mathbf{F}_1^j &= -t\nabla \mathbf{E}_f^j(t, \cdots) \cdot \left(t e_k - \frac{t^{1-\alpha}-1}{1-\alpha} \nabla_{v_k} \mathbf{A}_t(v)\right) + \frac{1}{t^\alpha} \nabla_{v_k} \mathbf{A}_t^j(v) \\
&\quad + \frac{t^{1-\alpha}-1}{1-\alpha} \sum_{\ell=1}^3\nabla \mathbf{E}_f^\ell(t, \cdots) \cdot \left(t e_k - \frac{t^{1-\alpha}-1}{1-\alpha} \nabla_{v_k} \mathbf{A}_t(v)\right) \nabla_{v_\ell} \mathbf{A}_t(v) \\
&\quad + \frac{t^{1-\alpha}-1}{1-\alpha} \left\{ \nabla_{v_k} \partial_t \mathbf{A}_t^j(v) + \sum_{\ell=1}^3 \mathbf{E}_f^\ell(t, \cdots) \cdot \nabla_{v_k} \nabla_{v_\ell} \mathbf{A}_t^j(v) \right\}, \\
\nabla_{x_k} \mathbf{F}_2^j &= \nabla_{x_k} \mathbf{E}_f^j(t, \cdots).
\end{aligned}
\right.
\end{equation}
From the dispersion bounds \eqref{eq: dispersion estimates} and Lemma~\ref{lemma: derivative bound for A(v)}, we obtain the following bounds:
\[
\left\{
\begin{aligned}
\|\nabla_x \mathbf{F}_1(t)\|_{L_x^\infty} &\lesssim \frac{\eta_*}{t^{2 + \alpha}} (t + t^{1-\alpha}) \lesssim \frac{\eta_*}{t^{1 + \alpha}}, \\
\|\nabla_v \mathbf{F}_1(t)\|_{L_x^\infty} &\lesssim t \frac{\eta_*}{t^{2 + \alpha}} (t + t^{1-\alpha}) + \frac{\eta_*}{t^\alpha} + t^{1 - \alpha} \frac{\eta_*}{t^{2 + \alpha}} (t + t^{1-\alpha}) + t^{1 - \alpha} \frac{\eta_* \ln \langle t \rangle}{t^{1 + \alpha}} \lesssim \frac{\eta_*}{t^\alpha}, \\
\|\nabla_x \mathbf{F}_2(t)\|_{L_x^\infty} &\lesssim \frac{\eta_*}{t^{2 + \alpha}}
\end{aligned}
\right.
\]
for \( t \geq 1 \), which establishes \eqref{eq: g eq. vector field bounds}.

It remains to show \eqref{eq: g eq. vector field bounds-2}. We note that
\[
\begin{aligned}
t \mathbf{E}_f(t, \cdots) &= - t \iint_{\mathbb{R}^6} \nabla w \left(x + tv - \frac{t^{1-\alpha}-1}{1-\alpha} \mathbf{A}_t(v) - \mathcal{X}(t, 0, \tilde{x}, \tilde{v}) \right) f_0(\tilde{x}, \tilde{v}) \, d\tilde{x} d\tilde{v} \\
&= - \frac{1}{t^\alpha} \iint_{\mathbb{R}^6} \nabla w \left(v - \frac{\mathcal{X}(t, 0, \tilde{x}, \tilde{v})-x}{t} - \frac{\mathbf{A}_t(v)(1-\frac{1}{t^{1-\alpha}})}{(1 - \alpha)t^\alpha}\right) f_0(\tilde{x}, \tilde{v})\,  d\tilde{x} d\tilde{v},
\end{aligned}
\]
where in the last step, we have used the homogeneity of \( \nabla w(x) = -\alpha \frac{x}{|x|^{2 + \alpha}} \).
Hence, it follows that 
\[
\begin{aligned}
\left| t \mathbf{E}_f(t, \cdots) - \frac{1}{t^\alpha} \mathbf{A}_t(v) \right| 
&\leq \frac{1}{t^\alpha} \iint_{\mathbb{R}^6} \big|\nabla w(v - y^1) - \nabla w(v - y^0) \big| f_0(\tilde{x}, \tilde{v})\,  d\tilde{x} d\tilde{v} \\
&= \frac{1}{t^\alpha} \int_0^1 \iint_{\mathbb{R}^6} \big|\nabla^2 w(v - y^\theta)\big| \bigg|\frac{dy^\theta}{d\theta}\bigg| f_0(\tilde{x}, \tilde{v})\,  d\tilde{x} d\tilde{v}\, d\theta
\end{aligned}
\]
where
$$ y^\theta = \theta\bigg( \frac{\mathcal{X}(t, 0, \tilde{x}, \tilde{v}) - x}{t} + \frac{\mathbf{A}_t(v)(1-\frac{1}{t^{1-\alpha}})}{(1 - \alpha)t^\alpha}\bigg) + (1-\theta) \mathcal{V}(t, 0, \tilde{x}, \tilde{v}). $$
Note that due to \eqref{eq: forward-in-time characteristic flow bounds-1} and \eqref{eq: derivative bound for A(v)}, we have
$$ \bigg|\frac{dy^\theta}{d\theta}\bigg| \leq \bigg|\frac{\mathcal{X}(t, 0, \tilde{x}, \tilde{v}) - t\mathcal{V}(t, 0, \tilde{x}, \tilde{v}) - x}{t}\bigg| + \frac{|\mathbf{A}_t(v)|}{(1 - \alpha)t^\alpha} \lesssim \frac{|x - \tilde{x}|}{t} + \frac{1}{t^\alpha}, $$
while by \eqref{eq: forward-in-time characteristic flow bounds-2},
$$\bigg|\frac{\partial y^\theta}{\partial \tilde{v}} - \mathbb{I}_3\bigg| \leq \theta \bigg|\frac{\nabla_{\tilde{v}}\mathcal{X}(t, 0, \tilde{x}, \tilde{v})}{t} - \mathbb{I}_3\bigg| + (1-\theta) \big|\nabla_{\tilde{v}}\mathcal{V}(t, 0, \tilde{x}, \tilde{v}) - \mathbb{I}_3\big| \lesssim \eta_* \ll 1.$$
Thus, as in the proof of \eqref{eq: derivative bound for A(v)}, it follows that
\[
\begin{aligned}
\left| t \mathbf{E}_f(t, \cdots) - \frac{1}{t^\alpha} \mathbf{A}_t(v) \right| 
&\lesssim \frac{1}{t^\alpha} \int_0^1 \iint_{\mathbb{R}^6} \big|\nabla^2 w(v - y^\theta)\big| \bigg(\frac{|x - \tilde{x}|}{t} + \frac{1}{t^\alpha}\bigg) f_0(\tilde{x}, \tilde{v})\,  d\tilde{x} d\tilde{v}\, d\theta \\
&\lesssim \frac{\langle x\rangle}{t^{2\alpha}} \int_0^1 \iint_{\mathbb{R}^6} \big|\nabla^2 w(v - \tilde{v})\big|  \Big(\langle x\rangle |f_0|\Big)(\tilde{x}, \tilde{v})\,  d\tilde{x} d\tilde{v}\, d\theta \\
&\lesssim \frac{\langle x \rangle}{t^{2\alpha}} \|\langle \tilde{x} \rangle f_0\|_{L_{\tilde{x}}^1 L_{\tilde{v}}^\infty \cap L_{\tilde{x},\tilde{v}}^1} \lesssim \frac{\eta_*}{t^{2\alpha}} \langle x \rangle.
\end{aligned}
\]
Therefore, by the dispersion bounds \eqref{eq: dispersion estimates} and Lemma \ref{lemma: derivative bound for A(v)}, we conclude
\[
\begin{aligned}
\left\| \frac{\mathbf{F}_1(t)}{\langle x \rangle} \right\|_{L_{x,v}^\infty} 
&\leq \left\| \frac{1}{\langle x \rangle} \left( t \mathbf{E}_f(t, \cdots) - \frac{1}{t^\alpha} \mathbf{A}_t(v) \right) \right\|_{L_{x,v}^\infty} \\
&\quad + \frac{t^{1-\alpha}-1}{1-\alpha} \left\|\partial_t \mathbf{A}_t(v) + \mathbf{E}_f\big( t, \tilde{\Phi}_1^{\textup{ref}}(t)(x,v)\big) \cdot \nabla \mathbf{A}_t(v) \right\|_{L_{x,v}^\infty} \lesssim \frac{\eta_*}{t^{2\alpha}}.
\end{aligned}
\]
\end{proof}

\begin{remark}[Vector field representation of the modified distribution]\label{remark: Vector field representation of the modified distribution}
For each \((x,v) \in \mathbb{R}^3 \times \mathbb{R}^3\), let \(\mathbf{X}(t,x,v)\) be the solution of the initial value problem:
\begin{equation}\label{eq: modified ODE}
\left\{
\begin{aligned}
\dot{\mathbf{X}}(t,x,v) &= \mathbf{F}\big(t, \mathbf{X}(t,x,v)\big), \\
\mathbf{X}(0,x,v) &= (x,v),
\end{aligned}
\right.
\end{equation}
where we denote \(\mathbf{X}(t)(x,v) = \mathbf{X}(t,x,v)\) so that \(\mathbf{X}(t): \mathbb{R}^3 \times \mathbb{R}^3 \to \mathbb{R}^3 \times \mathbb{R}^3\) for each \(t \geq 0\). Then, \(g(t,x,v)\) can be written as the pushforward of \(f_0\) by the vector field, specifically,
$$g(t,x,v) = \big(\mathbf{X}(t)_\# f_0\big)(x,v) = f_0\big(\mathbf{X}(t)^{-1}(x,v)\big),$$
because for \(\varphi \in \mathcal{S}(\mathbb{R}^3 \times \mathbb{R}^3)\),
$$\begin{aligned}
\frac{d}{dt} \big\langle \varphi, \mathbf{X}(t)_\# f_0 \big\rangle &= \frac{d}{dt} \big\langle \varphi\left(\mathbf{X}(t,x,v)\right), f_0(x,v) \big\rangle \\
&= \big\langle \nabla_{(x,v)} \varphi \left(\mathbf{X}(t,x,v)\right) \cdot \mathbf{F}(t, \mathbf{X}(t,x,v)), f_0(x,v) \big\rangle \\
&= \big\langle \nabla_{(x,v)} \varphi \cdot \mathbf{F}(t), \mathbf{X}(t)_\# f_0 \big\rangle = - \big\langle \varphi, \nabla_{(x,v)} \cdot \left( \mathbf{F}(t) \left( \mathbf{X}(t)_\# f_0 \right) \right) \big\rangle \\
&= - \big\langle \varphi, \mathbf{F}(t) \cdot \nabla_{(x,v)} \left( \mathbf{X}(t)_\# f_0 \right) \big\rangle,
\end{aligned}$$
where \(\langle\cdot,\cdot\rangle = \langle\cdot,\cdot\rangle_{L_{x,v}^2}\) and we used that $\mathbf{F}(t)$ is divergence-free in the last step (see \eqref{eq: g eq. divergence-free vector field}).
\end{remark}

With this result in place, we may now prove the proposition.

\begin{proof}[Proof of Proposition \ref{prop: global bounds for modified distribution}]
Due to equation \eqref{eq: modified VR eq-energy estimates} and \eqref{eq: g eq. divergence-free vector field}, we deduce 
$$\frac{d}{dt}\big(\langle x \rangle g(t)\big)
= -\langle x \rangle \mathbf{F}(t) \cdot \nabla_{(x,v)} g(t) = -\mathbf{F}(t) \cdot \nabla_{(x,v)} \big( \langle x \rangle g(t) \big) +  \frac{x}{\langle x \rangle}  \cdot \mathbf{F}_1(t) g(t).$$
Then, by the notation \(\mathbf{X}(t)\) from Remark \ref{remark: Vector field representation of the modified distribution}, this can be expressed\footnote{We evolve from time $t=1$ to avoid the singularity of $\mathbf{F}$ at $t=0$ (see the definition of $\mathbf{F}_1$ in \eqref{eq: modified VR eq-energy estimates-2}).} as
$$
\langle x \rangle g(t) = \mathbf{X}(t)_\# \big( \langle x \rangle g(1) \big) + \int_1^t \mathbf{X}(t - t_1)_\# \bigg( \frac{x}{\langle x \rangle} \cdot \mathbf{F}_1(t_1) g(t_1) \bigg)\, dt_1.
$$
Therefore, using the fact that \(\|\mathbf{X}(t)_\# \phi\|_{L_{x,v}^\infty} = \|\phi\|_{L_{x,v}^\infty}\), we obtain
$$
\|\langle x\rangle g(t)\|_{L_{x,v}^{\infty}} \leq \big\|\langle x \rangle g(1)\big\|_{L_{x,v}^\infty} +  \int_1^t \bigg\| \frac{\mathbf{F}_1(t_1)}{\langle x\rangle }\bigg\|_{L_{x,v}^{\infty}} \|\langle x\rangle g(t_1)\|_{L_{x,v}^{\infty}}\, dt_1.
$$
Subsequently, applying Grönwall's inequality and using the bound \eqref{eq: g eq. vector field bounds-2}, we conclude
\begin{equation}\label{eq: xk weight g L infinity bound}
\|\langle x\rangle g(t)\|_{L_{x,v}^{\infty}} \lesssim \|\langle x\rangle g(1)\|_{L_{x,v}^{\infty}}.
\end{equation}

Next, for the derivatives, we observe that by equation \eqref{eq: modified VR eq-energy estimates} again, \(\langle x \rangle \nabla_{x_j} g(t,x,v)\) satisfies 
$$
\begin{aligned}
\frac{d}{dt} \big( \langle x \rangle \nabla_{x_j} g(t) \big)&= - \langle x \rangle \nabla_{x_j} \Big( \mathbf{F}(t) \cdot \nabla_{(x,v)} g(t) \Big) \\
&= - \langle x \rangle \mathbf{F}(t) \cdot \nabla_{(x,v)} \nabla_{x_j} g(t) - \langle x \rangle \nabla_{x_j} \mathbf{F}(t) \cdot \nabla_{(x,v)} g(t) \\
&= - \mathbf{F}(t) \cdot \nabla_{(x,v)} \big( \langle x \rangle \nabla_{x_j} g(t) \big) + \frac{x}{\langle x \rangle} \cdot \mathbf{F}_1(t) \nabla_{x_j} g(t) \\
&\quad - \langle x \rangle (\nabla_{x_j} \mathbf{F})(t) \cdot \nabla_{(x,v)} g(t),
\end{aligned}
$$
which is equivalent to the integral equation
$$
\begin{aligned}
\langle x \rangle \nabla_{x_j} g(t) &= \mathbf{X}(t)_\# \big( \langle x \rangle \nabla_{x_j} g(1) \big) + \int_1^t \mathbf{X}(t - t_1)_\# \bigg\{ \frac{x}{\langle x \rangle} \cdot \mathbf{F}_1(t_1) (\nabla_{x_j} g)(t_1) \bigg\}\,  dt_1 \\
&\quad - \int_1^t \mathbf{X}(t - t_1)_\# \Big\{ (\nabla_{x_j} \mathbf{F})(t_1) \cdot \big( \langle x \rangle \nabla_{(x,v)} g \big)(t_1) \Big\}\,  dt_1.
\end{aligned}
$$
Hence, using the bounds for \(\mathbf{F}\) (see \eqref{eq: g eq. vector field bounds} and \eqref{eq: g eq. vector field bounds-2}), we obtain
$$
\begin{aligned}
&\|\langle x\rangle \nabla_x g(t) \|_{L_{x,v}^{\infty}}\\
&\lesssim \|\langle x\rangle \nabla_x g(1)\|_{L_{x,v}^{\infty}} + \int_1^t \bigg\|\frac{\mathbf{F}_1(t_1)}{\langle x\rangle }\bigg\|_{L_{x,v}^{\infty}} \|\langle x\rangle  \nabla_x g(t_1) \|_{L_{x,v}^{\infty}}\,  dt_1 \\
&\quad + \int_1^t \|\nabla_x \mathbf{F}_1(t_1)\|_{L_{x,v}^\infty} \|\langle x\rangle \nabla_x g(t_1) \|_{L_{x,v}^{\infty}} + \|\nabla_x \mathbf{F}_2(t_1)\|_{L_{x,v}^\infty} \|\langle x\rangle \nabla_v g(t_1)\|_{L_{x,v}^{\infty}}\,  dt_1 \\
&\lesssim \| \langle x\rangle \nabla_xg(1)\|_{L_{x,v}^{\infty}}+ \int_1^t \frac{\eta_*}{t_1^{2\alpha}} \|\langle x\rangle  \nabla_x g(t_1)\|_{L_{x,v}^{\infty}} + \frac{\eta_*}{t_1^{2+\alpha}} \|\langle x\rangle \nabla_v g(t_1)\|_{L_{x,v}^{\infty}} \,  dt_1.
\end{aligned}
$$
Similarly, due to \eqref{eq: modified VR eq-energy estimates}, we deduce the equation
\begin{equation}\label{eq: x weighted v derivative g}
\begin{aligned}
\frac{d}{dt} \big( \langle x \rangle \nabla_{v_j} g(t) \big)&= - \mathbf{F}(t) \cdot \nabla_{(x,v)} \big( \langle x \rangle \partial_{v_j} g(t) \big) + \frac{x}{\langle x \rangle} \cdot \mathbf{F}_1(t) \nabla_{v_j} g(t) \\
&\quad - \langle x \rangle \nabla_{v_j} \mathbf{F}(t) \cdot \nabla_{(x,v)} g(t),
\end{aligned}
\end{equation}
whose integral form is given by
$$
\begin{aligned}
\langle x \rangle \nabla_{v_j} g(t)
&= \mathbf{X}(t)_\# \big( \langle x \rangle \nabla_{v_j} g(1)\big) + \int_1^t \mathbf{X}(t - t_1)_\# \bigg\{ \frac{x}{\langle x \rangle} \cdot \mathbf{F}_1(t_1) \nabla_{v_j} g(t_1) \bigg\} dt_1 \\
&\quad - \int_1^t \mathbf{X}(t - t_1)_\# \left\{ (\nabla_{v_j} \mathbf{F})(t_1) \cdot \big( \langle x \rangle \nabla_{(x,v)} g \big)(t_1) \right\} dt_1.
\end{aligned}
$$
Subsequently, from the bounds \eqref{eq: g eq. vector field bounds} and \eqref{eq: g eq. vector field bounds-2}, we obtain
$$
\begin{aligned}
&\|\langle x\rangle \nabla_v g(t)\|_{L_{x,v}^{\infty}}\\
&\lesssim \|\langle x\rangle \nabla_v g(1) \|_{L_{x,v}^{\infty}} + \int_1^t \bigg\|\frac{\mathbf{F}_1(t_1)}{\langle x\rangle }\bigg\|_{L_{x,v}^{\infty}} \|\langle x\rangle \nabla_v g(t_1)\|_{L_{x,v}^{\infty}} \, dt_1 \\
&\quad + \int_1^t \|\nabla_v \mathbf{F}_1(t_1)\|_{L_{x,v}^\infty} \|\langle x\rangle \nabla_x g(t_1) \|_{L_{x,v}^{\infty}}+ \|\langle x\rangle \nabla_v \mathbf{F}_2(t_1)\|_{L_{x,v}^{\infty}} \|\langle x\rangle \nabla_v g(t_1) \|_{L_{x,v}^{\infty}} \, dt_1 \\
&\lesssim \|\langle x\rangle \nabla_v g(1)\|_{L_{x,v}^{\infty}} + \int_1^t \frac{\eta_*}{t_1^{\alpha}} \|\langle x\rangle \nabla_x g(t_1)\|_{L_{x,v}^{\infty}} + \frac{\eta_*}{t_1^{2\alpha}} \|\langle x\rangle \nabla_v g(t_1) \|_{L_{x,v}^{\infty}}\,  dt_1.
\end{aligned}
$$
From the above inequality, we observe that if \(\sup_{t \geq 0} \|\langle x\rangle \nabla_x g(t)\|_{L_{x,v}^{\infty}}\) is bounded, then an \(O(t^{1-\alpha})\)-bound is expected for \(\|\langle x\rangle \nabla_v g(t)\|_{L_{x,v}^{\infty}}\) due to the integral of \(\frac{\eta_*}{\langle t_1 \rangle^\alpha} \|\langle x\rangle \nabla_x g(t_1)\|_{L_{x,v}^{\infty}}\). Thus, it is natural to define and estimate the quantity
$$
\mathbf{M}(t) := \|\langle x\rangle\nabla_x g(t)\|_{L_{x,v}^{\infty}} + \frac{1}{t^{1-\alpha}}\|\langle x\rangle\nabla_v g(t)\|_{L_{x,v}^{\infty}}.
$$
Combining the bounds for \(\|\langle x\rangle \nabla_x g(t)\|_{L_{x,v}^{\infty}}\) and \(\|\langle x\rangle \nabla_v g(t)\|_{L_{x,v}^{\infty}}\), we obtain
$$
\mathbf{M}(t) \lesssim \mathbf{M}(1) + \int_1^t \frac{\eta_*}{t_1^{2\alpha}} \mathbf{M}(t_1)\,  dt_1 + \frac{1}{t^{1-\alpha}} \int_1^t \frac{\eta_*}{ t_1^\alpha} \mathbf{M}(t_1) \, dt_1.
$$
Hence, applying Grönwall's inequality yields
\begin{equation}\label{eq: M(t) Gronwall}
\mathbf{M}(t) \lesssim \mathbf{M}(1) \exp \left\{ C \int_1^t \frac{\eta_*}{t_1^{2\alpha}} + \frac{\eta_*}{t^{1-\alpha} t_1^\alpha}\,  dt_1 \right\}
\lesssim \mathbf{M}(1).
\end{equation}

It remains to estimate $\|\langle x\rangle g(1)\|_{L_{x,v}^{\infty}}$ (see \eqref{eq: xk weight g L infinity bound}) and $\mathbf{M}(1)$ and show that it is controlled by the initial data, and thus sufficiently small. Indeed, by the definitions of $g$ (see \eqref{eq: g(t) definition}) and the smallness bound for $\mathbf{A}_t$ (see \eqref{eq: derivative bound for A(v)}), we have
$$\begin{aligned}
\mathbf{M}(1) &= \left\|\langle x\rangle \nabla_{(x,v)}\left\{ f\left(1, x + v - \frac{1}{1-\alpha} \mathbf{A}_1(v), v \right)\right\}\right\|_{L_{x,v}^{\infty}}\\
&\lesssim\left\|\langle x\rangle (\nabla_{(x,v)}f)\left(1, x + v - \frac{1}{1-\alpha} \mathbf{A}_1(v), v \right)\right\|_{L_{x,v}^\infty}\lesssim\big\|\langle x-v\rangle \nabla_{(x,v)}f(1)\big\|_{L_{x,v}^\infty}.
\end{aligned}$$
Subsequently, by the notation for the backward-in-time flow in \eqref{eq: Hamiltonian ODE0} and its derivative bounds in \eqref{eq: derivative characteristic bound}, it follows that
$$\begin{aligned}
\mathbf{M}(1)&\lesssim\Big\|\langle x-v\rangle \nabla_{(x,v)}\Big(f_0\big(\mathcal{X}(0,1,x,v), \mathcal{V}(0,1,x,v)\big)\Big)\Big\|_{L_{x,v}^\infty}\\
&\lesssim \Big\|\langle x-v\rangle \big(\nabla_{(x,v)}f_0\big)\big(\mathcal{X}(0,1,x,v), \mathcal{V}(0,1,x,v)\big)\Big\|_{L_{x,v}^\infty}\\
&=\big\|\langle \mathcal{X}(1,0,x,v)-\mathcal{V}(1,0,x,v)\rangle \nabla_{(x,v)}f_0\big\|_{L_{x,v}^\infty},
\end{aligned}$$
where in the last step, we have changed variables by $$(x,v)\mapsto \Phi(1)(x,v)=(\mathcal{X}(1,0,x,v), \mathcal{V}(1,0,x,v)).$$
Note also that by equation \eqref{eq: characteristic flow, integral form} and the force field bound \eqref{eq: dispersion estimates}, we have 
$$\big|\mathcal{X}(1,0,x,v)-\mathcal{V}(1,0,x,v)-x\big| \leq \int_0^1 t_1\|\mathbf{E}_f(t_1)\|_{L_x^\infty} \, dt_1\lesssim\eta_*.$$
Therefore, we obtain 
$$\mathbf{M}(1)\lesssim \big\|\langle x\rangle \nabla_{(x,v)}f_0\big\|_{L_{x,v}^\infty}\lesssim\eta_*.$$
Similarly, we can show that
$$\|\langle x\rangle g(1)\|_{L_{x,v}^{\infty}}=\big\|\langle \mathcal{X}(1,0,x,v)-\mathcal{V}(1,0,x,v)\rangle f_0\big\|_{L_{x,v}^\infty}\lesssim \|\langle x\rangle f_0\|_{L_{x,v}^\infty}\lesssim\eta_*.$$
Inserting these bounds within \eqref{eq: M(t) Gronwall} and \eqref{eq: xk weight g L infinity bound} completes the proof of the proposition.
\end{proof}

\section{Proof of modified scattering}\label{sec: proof of modified scattering}

To prove the main theorem (Theorem~\ref{theorem: modified scattering}), we first take $t \geq 1$ throughout, define
$$f^+(x,v):= f_0\big((\mathcal{W}^{\mathrm{mod},+})^{-1} (x,v)\big),$$
and
decompose the difference
$$
\begin{aligned}
&\big\|f\big(t, \Phi^{\mathrm{ref}}(t)(x,v)\big)-f^+(x,v)\big\|_{L_{x,v}^\infty}\\
&\leq \big\|f\big(t, \Phi^{\mathrm{ref}}(t)(x,v)\big)-f\big(t, \tilde{\Phi}^{\mathrm{ref}}(t)(x,v)\big)\big\|_{L_{x,v}^\infty} + \|g(t)-f^+\|_{L_{x,v}^\infty}\\
&=\textup{(I)} + \textup{(II)},
\end{aligned}
$$
where $g(t,x,v) = f(t, \tilde{\Phi}^{\mathrm{ref}}(t)(x,v))$ denotes the modified distribution defined in \eqref{eq: g(t) definition}. We estimate $\textup{(I)}$ and $\textup{(II)}$ separately.

For $\textup{(I)}$, we first note that 
$\tilde{\Phi}^{\mathrm{ref}}(t)$ and $\Phi^{\mathrm{ref}}(t)$ share the same momentum component, i.e.
\[
\tilde{\Phi}_2^{\mathrm{ref}}(t)(x,v)=\Phi_2^{\mathrm{ref}}(t)(x,v)=v.
\]
On the other hand, within the position component we have the decaying difference bound
$$
|\Phi_1^{\mathrm{ref}}(t)(x,v)- \tilde{\Phi}_1^{\mathrm{ref}}(t)(x,v)|
=\frac{t^{1-\alpha}-1}{1-\alpha}|\mathbf{A}_t(v)-\mathbf{A}_\infty(v)|
\lesssim \frac{\eta_*}{t^{2\alpha-1}},
 $$
due to the convergence estimate \eqref{eq: convergence estimate for A_t(v)}. Thus, together with the global-in-time bound for $\|\nabla_x f(t)\|_{L_{x,v}^\infty} = \|\nabla_x g(t)\|_{L_{x,v}^\infty}$ provided by Proposition \ref{prop: global bounds for modified distribution}, this yields
\begin{equation}\label{eq: modified scattering proof, part 1}
(\textup{I})\lesssim \frac{\eta_*}{t^{2\alpha-1}} \|\nabla_x f(t)\|_{L_{x,v}^\infty}
\lesssim \frac{\eta_*^2}{t^{2\alpha-1}}.
\end{equation}

For $\textup{(II)}$, we note that
$$
g(t,x,v) = f_0\Big(\big(\Phi(t)^{-1} \circ \tilde{\Phi}^{\mathrm{ref}}(t)\big)(x,v)\Big)
= f_0\big(\mathcal{W}^{\mathrm{mod}}(t)^{-1}(x,v)\big),
$$
and thus, the scattering state can be expressed in terms of $g$ as
$$
f^+(x,v)= f_0\big((\mathcal{W}^{\mathrm{mod},+})^{-1}(x,v)\big)
= g\Big(t, \big(\mathcal{W}^{\mathrm{mod}}(t) \circ (\mathcal{W}^{\mathrm{mod},+})^{-1}\big)(x,v)\Big).
$$
Therefore, using the fact that $\mathcal{W}^{\textup{mod},+}$ is one-to-one, we can write 
$$
\textup{(II)} = \big\| g\big(t,\mathcal{W}^{\textup{mod},+}(x,v)\big)
- g\big(t,\mathcal{W}^{\textup{mod}}(t)(x,v)\big) \big\|_{L_{x,v}^\infty}.
$$
Hence, applying the fundamental theorem of calculus, we obtain
\begin{equation}\label{eq: g f+ difference estimate}
\begin{aligned}
\textup{(II)} &\leq \int_0^1 
\big\| \nabla_x g\big(t,\mathcal{W}^\theta(t)(x,v)\big)
\cdot \big(\mathcal{W}_1^{\textup{mod},+} - \mathcal{W}_1^{\textup{mod}}(t)\big)(x,v) 
\big\|_{L_{x,v}^\infty} \, d\theta \\
&\quad + \int_0^1 
\big\| \nabla_v g\big(t,\mathcal{W}^\theta(t)(x,v)\big)
\cdot \big(\mathcal{W}_2^{\textup{mod},+} - \mathcal{W}_2^{\textup{mod}}(t)\big)(x,v)
\big\|_{L_{x,v}^\infty} \, d\theta \\
&= \textup{(II)}_1 + \textup{(II)}_2,
\end{aligned}
\end{equation}
where
$$
\mathcal{W}^\theta(t) := \theta \mathcal{W}^{\textup{mod},+}
+ (1-\theta) \mathcal{W}^{\textup{mod}}(t).
$$
By the convergence bound for the position component of the wave operator (Proposition \ref{prop: modified wave operator} $(1)$), we first obtain
$$
\textup{(II)}_1 \lesssim \frac{\eta_*}{t^{2\alpha-1}} \int_0^1 \big\|\langle x\rangle \nabla_x g\big(t,\mathcal{W}^\theta(t)(x,v)\big)\big\|_{L_{x,v}^\infty} \, d\theta.
$$
Additionally, note that
$$
\langle x\rangle \sim \big\langle \mathcal{W}_1^\theta(t)(x,v)\big\rangle,
$$
because by Proposition \ref{prop: modified wave operator} (2), we have
$$
|\mathcal{W}_1^\theta(t)(x,v)-x| \leq \theta |\mathcal{W}_1^{\textup{mod},+}(x,v)-x| + (1-\theta)|\mathcal{W}_1^{\textup{mod}}(t)(x,v)-x| \lesssim \eta_* \langle x \rangle.
$$
Thus, it follows that
\begin{equation}\label{eq: g f+ difference estimate, part1}
\begin{aligned}
\textup{(II)}_1 &\lesssim \frac{\eta_*}{t^{2\alpha-1}} \int_0^1 \big\|\big\langle \mathcal{W}_1^\theta(t)(x,v)\big\rangle \nabla_x g\big(t,\mathcal{W}^\theta(t)(x,v)\big)\big\|_{L_{x,v}^\infty} \, d\theta \\
&\lesssim \frac{\eta_*}{t^{2\alpha-1}} \|\langle x\rangle \nabla_x g(t)\|_{L_{x,v}^\infty} \lesssim \frac{\eta_*^2}{t^{2\alpha-1}},
\end{aligned}
\end{equation}
due to the uniform bound for $\|\langle x\rangle \nabla_x g(t)\|_{L_{x,v}^\infty}$ (Proposition~\ref{prop: global bounds for modified distribution}).
On the other hand, by the convergence bound for the momentum part of the wave operator (Proposition \ref{prop: modified wave operator} $(1)$) and the $O(t^{1-\alpha})$ bound for $\|\nabla_v g(t)\|_{L_{x,v}^\infty}$ (Proposition~\ref{prop: global bounds for modified distribution}), we obtain
\begin{equation}\label{eq: g f+ difference estimate, part2}
\textup{(II)}_2 \lesssim  \frac{\eta_*}{t^\alpha} \int_0^1 \big\|\nabla_v g\big(t,\mathcal{W}^\theta(t)(x,v)\big)\big\|_{L_{x,v}^\infty} \, d\theta \lesssim \frac{\eta_*}{t^\alpha} \|\nabla_v g(t)\|_{L_{x,v}^\infty} \lesssim \frac{\eta_*^2}{t^{2\alpha-1}}.
\end{equation}
Therefore, combining \eqref{eq: modified scattering proof, part 1}-\eqref{eq: g f+ difference estimate, part2}, the proof of the main result \eqref{eq: modified scattering} is complete.

\begin{remark}[Technical comment]\label{remark: modified scattering proof key remark}
In \eqref{eq: g f+ difference estimate}, it is crucial to estimate
$$
\nabla_{(x,v)} g\big(t,\mathcal{W}^\theta(t)(x,v)\big)
\cdot
\big(\mathcal{W}^{\mathrm{mod},+} - \mathcal{W}^{\mathrm{mod}}(t)\big)(x,v)
$$
by separating the position and momentum components:
$$
\begin{aligned}
&\nabla_x g\big(t,\mathcal{W}^\theta(t)(x,v)\big)
\cdot
\big(\mathcal{W}_1^{\mathrm{mod},+} - \mathcal{W}_1^{\mathrm{mod}}(t)\big)(x,v),\\
&\nabla_v g\big(t,\mathcal{W}^\theta(t)(x,v)\big)
\cdot
\big(\mathcal{W}_2^{\mathrm{mod},+} - \mathcal{W}_2^{\mathrm{mod}}(t)\big)(x,v).
\end{aligned}
$$
This separation is essential because we only have the growing bound 
$|\nabla_v g(t)| = O(t^{1-\alpha})$ from Proposition~\ref{prop: global bounds for modified distribution}. 
This growth can be compensated for by the faster convergence 
\(\mathcal{W}_2^{\mathrm{mod}}(t) \to \mathcal{W}_2^{\mathrm{mod},+}\) compared to 
\(\mathcal{W}_1^{\mathrm{mod}}(t) \to \mathcal{W}_1^{\mathrm{mod},+}\), as seen from  Proposition \ref{prop: modified wave operator} $(1)$, 
\eqref{eq: g f+ difference estimate, part1}, and 
\eqref{eq: g f+ difference estimate, part2}.
Without separating the components and exploiting the faster convergence of the momentum part of the modified wave operator, estimating
$$
\big|\nabla_{(x,v)} g\big(t,\mathcal{W}^\theta(t)(x,v)\big)\big|
\,\big|\big(\mathcal{W}^{\mathrm{mod},+} - \mathcal{W}^{\mathrm{mod}}(t)\big)(x,v)\big|
$$
would not allow us to include the range $\frac{1}{2} < \alpha < 1$.
\end{remark}

\end{document}